\newtheorem{thm}{Theorem}[section]
\newtheorem{definition}[thm]{Definition}
\newtheorem{lemma}[thm]{Lemma}
\newtheorem{cor}[thm]{Corollary}
\newtheorem{prop}[thm]{Proposition}
\newtheorem{remark}[thm]{Remark}
\newcommand{\R}{\mathbb{R}}
\newcommand{\RR}{\mathbb{R}^{2}}
\newcommand{\RRD}{\mathbb{R}^{3}}
\newcommand{\Umus}{\textbf{U}^{\mu,\gamma}_{\! \sslash}}
\newcommand{\us}{u_{\! \sslash}}
\newcommand{\Umuszero}{\textbf{U}^{\mu,0}_{\! \sslash}}
\newcommand{\lver}{\left\lvert}
\newcommand{\llver}{\left\lvert\left\lvert}
\newcommand{\rver}{\right\rvert}
\newcommand{\rrver}{\right\rvert \right\rvert}
\newcommand{\uastsh}{u^{\ast}_{\text{sh}}}
\newcommand{\vastsh}{v^{\ast}_{\text{sh}}}
\renewcommand{\footnote}[1]{\textsuperscript{\addtocounter{footnote}{1}(\thefootnote)}\footnotetext{#1}}
\def \epsilon {\varepsilon}
\begin{document}
\title{\textbf{Long wave approximation for water waves under a Coriolis forcing and the Ostrovsky equation}}
\author{Benjamin MELINAND\footnote{IMB, Universit\'e de Bordeaux. Email : benjamin.melinand@math.u-bordeaux.fr}}
\date{April 2016}

\maketitle

\vspace{-1cm}

\begin{abstract}
\noindent This paper is devoted to the study of the long wave approximation for water waves under the influence of the gravity and a Coriolis forcing. We start by deriving a generalization of the Boussinesq equations in 1D (in space) and we rigorously justify them as an asymptotic model of the water waves equations. These new Boussinesq equations are not the classical Boussinesq equations. A new term due to the vorticity and the Coriolis forcing appears that can not be neglected. Then, we study the Boussinesq regime and we derive and fully justify  different asymptotic models when the bottom is flat : a linear equation linked to the Klein-Gordon equation admitting the so-called Poincar\'e waves; the Ostrovsky equation, which is a generalization of the KdV equation in presence of a Coriolis forcing, when the rotation is weak;  and finally the KdV equation when the rotation is very weak. Therefore, this work provides the first mathematical justification of the Ostrovsky equation. Finally, we derive a generalization of the Green-Naghdi equations in 1D in space for small topography variations and we show that this model is consistent with the water waves equations.
\end{abstract}

\section{Introduction}

\noindent We study the motion of an incompressible, inviscid fluid with a constant density $\rho$ and no surface tension under the influence of the gravity $\bm{g} = -g \bm{e_{z}}$ and the rotation of the Earth with a rotation vector $\textbf{f} = \frac{f}{2} \bm{e_{z}}$.  We suppose that the seabed and the surface are graphs above the still water level. The horizontal variable is $X=(x,y) \in \RR$ and $z \in \mathbb{R}$ is the vertical variable. The water occupies the domain $\Omega_{t} := \{ (X,z) \in \RRD \text{ , } -H + b(X) < z < \zeta (t,X) \}$. The velocity in the fluid domain is denoted $\textbf{U} = \left( \textbf{V}, \text{w} \right)^{t}$ where $\textbf{V}$ is the horizontal component of $\textbf{U}$ and $\text{w}$ its vertical component. The equations governing such a fluid are the free surface Euler-Coriolis equations\footnote{We  consider that the centrifugal potential is constant and included in the pressure term.}

\begin{equation}\label{Euler_equations}
\left\{
\begin{aligned}
&\partial_{t} \textbf{U} + \left( \textbf{U} \cdot \nabla_{\! X,z} \right) \textbf{U} + \textbf{f} \times \textbf{U} = - \frac{1}{\rho} \nabla_{\! X,z} \mathcal{P} - g \bm{e_{z}} \text{ in } \Omega_{t},\\
&\text{div} \; \textbf{U} = 0 \text{ in } \Omega_{t},
\end{aligned}
\right.
\end{equation}

\noindent with the boundary conditions

\begin{equation}\label{bound_cond}
\left\{
\begin{aligned}
&\mathcal{P}_{|z=\zeta} = P_{0}, \\
&\partial_{t} \zeta	- \underline{\textbf{U}} \cdot \textbf{N} = 0, \\
&\textbf{U}_{b} \cdot \textbf{N}_{b} = 0,
\end{aligned}
\right.
\end{equation}

\noindent where $P_{0}$ is constant, $\textbf{N} = \begin{pmatrix} - \nabla \zeta \\ 1 \end{pmatrix}$, $\textbf{N}_{b} = \begin{pmatrix} - \nabla b \\ 1 \end{pmatrix}$, $\underline{\textbf{U}} = \begin{pmatrix} \underline{\textbf{V}} \\ \underline{\text{w}} \end{pmatrix} = \textbf{U}_{|z=\zeta}$ and $\textbf{U}_{b} = \begin{pmatrix} \textbf{V}_{b} \\ \text{w}_{b} \end{pmatrix} = \textbf{U}_{|z=-H+b}.$

\medskip

\noindent Influenced by the works of Zakharov (\cite{Zakharov}) and Craig-Sulem-Sulem (\cite{Craig_Sulem_2}), Castro and Lannes in \cite{Castro_Lannes_vorticity} shown that we can express the free surface Euler equations thanks to the unknowns $\left(\zeta, \textbf{U}_{\! \sslash}, \bm{\omega} \right)$\footnote{In fact, Castro and Lannes used the unknowns $\left(\zeta, \frac{\nabla}{\Delta} \cdot \textbf{U}_{\! \sslash}, \bm{\omega} \right)$. But the unknowns $\left(\zeta, \textbf{U}_{\! \sslash}, \bm{\omega} \right)$ are better to derive shallow water asymptotic models.} where 

\begin{equation*}
\textbf{U}_{\! \sslash} = \underline{\textbf{V}} + \underline{\text{w}} \nabla \zeta,
\end{equation*}

\noindent and $\bm{\omega}$ is the vorticity of the fluid. Then, they gave a system of three equations on these unknowns. In \cite{my_coriolis} we proceeded as Castro and Lannes and, taking into account the Coriolis force, we got the following system, called the Castro-Lannes system or the water waves equations,

\begin{equation}\label{castro_lannes_formulation_dim}
\left\{
\begin{aligned}
&\hspace{-0.05cm} \partial_{t} \zeta -\underline{\textbf{U}}\cdot \textbf{N} = 0,\\
&\partial_{t} \textbf{U}_{\! \sslash} \hspace{-0.1cm} +  \hspace{-0.1cm} \nabla \zeta  \hspace{-0.1cm} +  \hspace{-0.1cm} \frac{1}{2} \nabla \hspace{-0.1cm} \left\lvert \textbf{U}_{\! \sslash} \right\rvert^{2}  \hspace{-0.1cm} -  \hspace{-0.1cm} \frac{1}{2} \nabla \hspace{-0.1cm} \left[ \left( 1 +  \left\lvert \nabla \zeta\right\rvert^{2} \right) \underline{\text{w}}^{2} \right]  \hspace{-0.1cm} +  \hspace{-0.1cm} \underline{\bm{\omega}} \hspace{-0.05cm} \cdot \hspace{-0.05cm} \textbf{N} \hspace{0.05cm} \underline{\textbf{V}}^{\perp}  \hspace{-0.1cm} +  \hspace{-0.1cm} f \underline{\textbf{V}}^{\perp} = 0,\\
&\hspace{-0.05cm} \partial_{t} \bm{\omega} \hspace{-0.05cm} + \hspace{-0.05cm}  \left( \textbf{U} \hspace{-0.05cm} \cdot \hspace{-0.05cm} \nabla_{\! X,z} \hspace{-0.05cm} \right) \bm{\omega} \hspace{-0.05cm} = \left( \bm{\omega} \cdot \nabla_{\! X,z} \right) \textbf{U} \hspace{-0.05cm} + \hspace{-0.05cm} f \partial_{z} \textbf{U},
\end{aligned}
\right.
\end{equation}

\noindent where $\underline{\bm{\omega}} = \bm{\omega}_{|z=\zeta}$ and $\textbf{U} = \begin{pmatrix} \textbf{V} \\ \text{w} \end{pmatrix} = \textbf{U}[\zeta, b](\textbf{U}_{\! \sslash},\bm{\omega})$ is the unique solution in $H^{1}(\Omega_{t})$ of

\begin{equation}\label{div_curl_formulation_dim}
\left\{
\begin{aligned}
&\text{curl} \; \textbf{U} = \bm{\omega} \text{ in } \Omega_{t},\\
&\text{div} \; \textbf{U} = 0 \text{ in } \Omega_{t},\\
&\left(\underline{\textbf{V}} + \underline{\text{w}} \nabla \zeta \right)_{|z= \zeta} = \textbf{U}_{\! \sslash},\\
&\textbf{U}_{b} \cdot \textbf{N}_{b} = 0,
\end{aligned}
\right.
\end{equation}

\noindent and with the following constraint

\begin{equation}\label{constraint_vorticity_dim}
\nabla^{\perp} \cdot \textbf{U}_{\! \sslash} = \underline{\bm{\omega}} \cdot \textbf{N}. 
\end{equation}

\noindent Our principal motivation is the study of the long waves or Boussinesq regime. Hence, we nondimensionalize the previous equations. We have six physical parameters in our problem : the typical amplitude of the surface $a$, the typical amplitude of the bathymetry $a_{\text{bott}}$, the typical longitudinal scale $L_{x}$,  the typical transverse scale $L_{y}$, the characteristic water depth $H$ and the typical Coriolis frequency $f$. Then we can introduce five dimensionless parameters 

\begin{equation*}
\epsilon =\frac{a}{H} \text{, } \beta = \frac{a_{\text{bott}}}{H} \text{, } \mu = \frac{H^{2}}{L_{x}^{2}} \text{, } \gamma = \frac{L_{x}}{L_{y}} \text{ and } \text{Ro} = \frac{a \sqrt{gH}}{H f L_{x}}.
\end{equation*}

\noindent The parameter $\epsilon$ is called the nonlinearity parameter, $\beta$ is called the bathymetric parameter, $\mu$ is called the shallowness parameter, $\gamma$ is called the transversality parameter and $\text{Ro}$ is the Rossby number. Then, we can nondimensionalize the Euler equations \eqref{Euler_equations} and the Castro-Lannes equations \eqref{castro_lannes_formulation_dim} (see Part \ref{nondim}).

\medskip

\noindent We organize our paper in four parts. In Subsection \ref{nondim}, we nondimensionalize the Castro-Lannes equations (see System \eqref{castro_lannes_formulation}) and we give in Subsection \ref{useful_results} a local wellposedness result on these equations by taking into account the dependence on the dimensionless parameters. Section \ref{derive_boussi} is devoted to derive a generalization of the Boussinesq equations in 1D under a Coriolis forcing and to fully justify it. The Boussinesq equations are obtained under the assumption that $\mu$ is small, $\epsilon, \beta = \mathcal{O}(\mu)$ (Boussinesq regime) and by neglecting all the terms of order $\mathcal{O}(\mu^{2})$ in the nondimensionalized Euler equations or the water waves equations (see for instance \cite{Alvarez_Lannes} in the irrotational framework). It is a system of two equations on the free surface $\zeta$ and the vertical average of the horizontal component of the velocity denoted $\overline{\textbf{V}} = (\overline{u}, \overline{v})^{t}$ (defined in \eqref{V_average}). Our Boussinesq-Coriolis equations are a system of three equations on the surface $\zeta$, the average vertical velocity $\overline{\textbf{V}}$ and the quantity $\textbf{V}^{\sharp} = (u^{\sharp},v^{\sharp})^{t}$ (defined in \eqref{u_diese}) which is introduced to catch interactions between the vorticity and the averaged velocity. These equations are the following system

\begin{equation*}
\left\{
\begin{aligned}
&\partial_{t} \zeta + \partial_{x} \left( [1+\epsilon \zeta - \beta b] \overline{u}  \right) = 0,\\
&\left(1- \frac{\mu}{3} \partial_{x}^{2} \right) \partial_{t} \overline{u} + \partial_{x} \zeta + \epsilon \overline{u} \partial_{x} \overline{u} - \frac{\epsilon}{\text{Ro}} \overline{v} + \frac{\epsilon}{\text{Ro}} \mu^{\frac{3}{2}} \frac{1}{24} \partial^{2}_{x} \frac{v^{\sharp}}{h} = 0,\\
&\partial_{t} \overline{v} + \epsilon \overline{u} \partial_{x} \overline{v} + \frac{\epsilon}{\text{Ro}} \overline{u} = 0,\\
&\partial_{t} \frac{\textbf{V}^{\sharp}}{h} + \epsilon \overline{u} \partial_{x} \frac{\textbf{V}^{\sharp}}{h} + \frac{\epsilon}{\text{Ro}} \frac{\textbf{V}^{\sharp}}{h}^{\perp} = 0,
\end{aligned}
\right.
\end{equation*}

\noindent where $h=1+\epsilon \zeta - \beta b$. Then, in Section \ref{asympto_model} we derive and fully justify different asymptotic models in the Boussinesq regime when the bottom is flat. We first derive in Subsection \ref{model_poincare} a linear system (System \eqref{boussines_linear}) linked to the Klein-Gordon equation admitting the so-called Poincar\'e waves. Then, in Subsection \ref{model_ostrov} we study the Ostrovsky equation

\begin{equation*}
\partial_{\xi} \left( \partial_{\tau} k +  \frac{3}{2} k \partial_{\xi} k + \frac{1}{6} \partial_{\xi}^{3} k \right) = \frac{1}{2} k.
\end{equation*}

\noindent This equation, derived by Ostrovsky (\cite{ostrovsky_original}), is a generalization of the KdV equation in presence of a Coriolis forcing. We offer a rigorous justification of the Ostrovsky approximation under a weak Coriolis forcing, i.e $\frac{\epsilon}{\text{Ro}} = \mathcal{O}(\sqrt{\mu})$. Notice that this work provides the first mathematical justification of the Ostrovsky equation. In Subsection \ref{model_kdv} we fully justify the KdV approximation (equation \eqref{kdv_eq}) when the rotation is very weak, i.e when  $\frac{\epsilon}{\text{Ro}} = \mathcal{O}(\mu)$. Finally, in Section \ref{derive_gn} we derive a generalization of the Green-Naghdi equations \eqref{green_naghdi} in 1D under a Coriolis forcing with small bottom variations and we show that this system is consistent with the water waves equations. The Green-Naghdi equations are originally obtained in the irrotational framework under the assumption that $\mu$ is small and by neglecting all the terms of order $\mathcal{O}(\mu^{2})$ in the nondimensionalized Euler equations or the water waves equations (see for instance \cite{green_naghdi_uneven_bott} or Part 5.1.1.2 in \cite{Lannes_ww} for a derivation in the irrotational framework). These equations were generalized in \cite{Castro_Lannes_shallow_water} in the rotational setting but without a Coriolis forcing. We add one in the paper.

\subsection{Notations}

\noindent - If $\textbf{A} \in \R^{3}$, we denote by $\textbf{A}_{h}$ its horizontal component.

\medskip

\noindent - If $\textbf{V}  = \begin{pmatrix} u \\ v \end{pmatrix} \in \RR$, we define the orthogonal of $\textbf{V}$ by $\textbf{V}^{\perp} = \begin{pmatrix} -v \\ u \end{pmatrix}$.

\medskip

\noindent - In this paper, $C \left( \cdot \right)$ is a nondecreasing and positive function whose exact value has no importance.

\medskip

\noindent - Consider a vector field $\textbf{A}$ or a function $\text{w}$ defined on $\Omega$. Then, we denote $\underline{\textbf{A}} = \textbf{A}_{|z=\epsilon\zeta}$, $\underline{\text{w}} = \text{w}_{|z=\epsilon \zeta}$ and $\textbf{A}_{b} = \textbf{A}_{|z=-1+\beta b}$, $\text{w}_{b} = \text{w}_{|z=-1+ \beta b}$.

\medskip

\noindent - If $s \in \R$ and $f$ is a function on $\RR$, $\left\lvert f \right\rvert_{H^{s}}$ is its $H^{s}$-norm, $\left\lvert f \right\rvert_{2}$ is its $L^{2}$-norm and $\lver f \rver_{L^{\infty}}$ its  $L^{\infty}(\RR)$-norm.

\medskip

\noindent - The operator $\left( \, , \, \right)_{2}$ is the $L^{2}$-scalar product in $\RR$.

\medskip

\noindent - If $f$ is a function defined on $\RR$, we denote $\nabla f$ the gradient of $f$.

\medskip

\noindent - If $\text{w}$ is a function defined on $\Omega$, $\nabla_{\! X,z} \text{w}$ is the gradient of $\text{w}$ and $\nabla_{\! X} \text{w}$ its horizontal component.

\medskip

\noindent - If $u = u(X,z)$ is defined in $\Omega$, we define

\begin{equation*}
\overline{u}(X) = \frac{1}{1+\epsilon \zeta - \beta b} \int_{-1+\beta b(X)}^{\epsilon \zeta(X)} u(X,z) dz \text{   and   } u^{\ast} = u - \overline{u}. 
\end{equation*}

\subsection{Nondimensionalization and the Castro-Lannes formulation}\label{nondim}

\noindent We recall the five dimensionless parameter

\begin{equation}\label{parameters}
\epsilon =\frac{a}{H} \text{, } \beta = \frac{a_{\text{bott}}}{H} \text{, } \mu = \frac{H^{2}}{L_{x}^{2}} \text{, } \gamma = \frac{L_{x}}{L_{y}} \text{ and } \text{Ro} = \frac{a \sqrt{gH}}{H f L_{x}}.
\end{equation}

\noindent  We nondimensionalize the variables and the unknowns. We introduce (see \cite{Lannes_ww} or \cite{my_coriolis} for instance for an explanation of this nondimensionalization)

\begin{equation}\label{nondimens_variables}
 \left\{ 
 \begin{aligned}
 &x' = \frac{x}{L_{x}} \text{, } y' = \frac{y}{L_{y}} \text{, } z' = \frac{z}{H} \text{, } \zeta' = \frac{\zeta}{a} \text{, } b' = \frac{b}{a_{\text{bott}}} \text{, } t' = \frac{\sqrt{gH}}{L_{x}} t \text{, }\\
 &\textbf{V}' = \sqrt{\frac{H}{g}} \frac{\textbf{V}}{a} \text{, } \text{w}'= H \sqrt{\frac{H}{g}} \frac{\text{w}}{aL_{x}} \text{ and } \mathcal{P}' = \frac{\mathcal{P}}{\rho g H}.
 \end{aligned}
 \right.
\end{equation}

\noindent In this paper, we use the following notations

\begin{equation}\label{nondim_op}
\nabla^{\gamma} = \nabla^{\gamma}_{\! X'} = \begin{pmatrix} \partial_{x'} \\ \gamma \partial_{y'} \end{pmatrix} \text{, } = \nabla^{\mu, \gamma}_{\! X',z'} = \begin{pmatrix} \sqrt{\mu} \nabla^{\gamma}_{\! X'} \\ \partial_{z'} \end{pmatrix} \text{, } \text{curl}^{\mu, \gamma} = \nabla^{\mu, \gamma}_{\! X',z'} \times \text{, } \text{div}^{\mu, \gamma} = \nabla^{\mu, \gamma}_{\! X',z'} \cdot .
\end{equation}

\noindent We also define 

\begin{equation}\label{nondim_U}
\textbf{U}^{\mu} = \begin{pmatrix} \sqrt{\mu} \textbf{V}' \\ \text{w}' \end{pmatrix} \text{, } \bm{\omega}'=\frac{1}{\mu} \text{curl}^{\mu, \gamma} \textbf{U}^{\mu} \text{, } \underline{\textbf{U}}^{\mu} = \begin{pmatrix} \sqrt{\mu} \underline{\textbf{V}}' \\ \underline{\text{w}}' \end{pmatrix} = \textbf{U}^{\mu}_{|z'= \epsilon \zeta'} \text{, } \textbf{U}^{\mu}_{b} = \textbf{U}^{\mu}_{|z'=-1+ \beta b'},
\end{equation}

\noindent and 

\begin{equation}
 \textbf{N}^{\mu, \gamma} =  \begin{pmatrix} - \epsilon \sqrt{\mu} \nabla^{\gamma} \zeta' \\ 1 \end{pmatrix} \text{, } \textbf{N}_{\! b}^{\mu, \gamma} = \begin{pmatrix} - \beta \sqrt{\mu} \nabla^{\gamma} b' \\ 1 \end{pmatrix}.
\end{equation}

\noindent Notice that our nondimensionalization of the vorticity allows us to consider only weakly sheared flows (see \cite{Castro_Lannes_shallow_water}, \cite{teshukov_shear_flows}, \cite{richard_gravi_shear_flows}). The nondimensionalized fluid domain is

\begin{equation}\label{Omega_t}
\Omega^{\prime}_{t'} := \{ (X',z') \in \RRD \text{ , } -1 + \beta b'(X') < z' < \epsilon \zeta' (t',X') \}.
\end{equation}

\noindent Finally, if $\textbf{V}  = \begin{pmatrix} u \\ v \end{pmatrix} \in \RR$, we define $\textbf{V}$ by $\textbf{V}^{\perp} = \begin{pmatrix} -v \\ u \end{pmatrix}$. Then, the Euler-Coriolis equations \eqref{Euler_equations} become 

\begin{equation}\label{nondim_Euler_equations}
\left\{
\begin{aligned}
&\partial_{t'} \textbf{U}^{\mu} + \frac{\epsilon}{\mu} \left( \textbf{U}^{\mu} \cdot \nabla_{\! X',z'}^{\mu, \gamma} \right) \textbf{U}^{\mu} + \frac{\epsilon \sqrt{\mu}}{\text{Ro}} \begin{pmatrix} \;\; \textbf{V}^{\prime \perp} \\ 0 \end{pmatrix} = - \frac{1}{\epsilon} \nabla_{\! X',z'}^{\mu, \gamma} \mathcal{P}' - \frac{1}{\epsilon} \bm{e_{z}} \text{ in } \Omega^{\prime}_{t},\\
&\text{div}^{\mu, \gamma}_{\! X' \! ,z'} \; \textbf{U}^{\mu} = 0 \text{ in } \Omega^{\prime}_{t},
\end{aligned}
\right.
\end{equation}

\noindent with the boundary conditions

\begin{equation}\label{nondim_bound_cond}
\left\{
\begin{aligned}
&\partial_{t'} \zeta'	- \frac{1}{\mu} \underline{\textbf{U}}^{\mu} \cdot \textbf{N}^{\mu, \gamma} = 0, \\
&\textbf{U}^{\mu}_{b} \cdot \textbf{N}_{b}^{\gamma, \mu} = 0.
\end{aligned}
\right.
\end{equation}

\noindent We can also nondimensionalize the Castro-Lannes formulation. We introduce the quantity

\begin{equation*}
\Umus = \underline{\textbf{V}} + \epsilon \underline{\text{w}} \nabla^{\gamma} \zeta.
\end{equation*}

\noindent Then, the Castro-Lannes formulation becomes (see \cite{Castro_Lannes_vorticity} or \cite{my_coriolis} when $\gamma = 1$),

\begin{equation}\label{castro_lannes_formulation}
\left\{
\begin{aligned}
&\hspace{-0.05cm} \partial_{t} \zeta - \frac{1}{\mu} \underline{\textbf{U}}^{\mu} \cdot \textbf{N}^{\mu, \gamma} = 0,\\
&\partial_{t} \Umus \hspace{-0.1cm} +  \hspace{-0.1cm} \nabla^{\gamma} \zeta  \hspace{-0.1cm} +  \hspace{-0.1cm} \frac{\epsilon}{2} \nabla^{\gamma}  \hspace{-0.1cm} \left\lvert \Umus \right\rvert^{2}  \hspace{-0.1cm} -  \hspace{-0.1cm} \frac{\epsilon}{2 \mu} \nabla^{\gamma}  \hspace{-0.1cm} \left[ \left( 1 +  \epsilon^{2} \mu \left\lvert \nabla^{\gamma} \zeta\right\rvert^{2} \right) \underline{\text{w}}^{2} \right]  \hspace{-0.1cm} +  \hspace{-0.1cm} \epsilon \underline{\bm{\omega}} \hspace{-0.05cm} \cdot \hspace{-0.05cm} \textbf{N}^{\mu, \gamma} \hspace{0.05cm} \underline{\textbf{V}}^{\perp}  \hspace{-0.1cm} +  \hspace{-0.1cm} \frac{\epsilon}{\text{Ro}} \underline{\textbf{V}}^{\perp} = 0,\\
&\hspace{-0.05cm} \partial_{t} \bm{\omega} \hspace{-0.05cm} + \hspace{-0.05cm} \frac{\epsilon}{\mu} \hspace{-0.05cm} \left( \hspace{-0.05cm} \textbf{U}^{\mu} \hspace{-0.05cm} \cdot \hspace{-0.05cm} \nabla^{\mu, \gamma}_{\! X,z} \hspace{-0.05cm} \right) \bm{\omega} \hspace{-0.05cm} = \hspace{-0.05cm} \frac{\epsilon}{\mu} \left( \bm{\omega} \cdot \nabla^{\mu, \gamma}_{\! X,z} \right) \textbf{U}^{\mu} \hspace{-0.05cm} + \hspace{-0.05cm} \frac{\epsilon}{\mu \text{Ro}} \partial_{z} \textbf{U}^{\mu},
\end{aligned}
\right.
\end{equation}

\noindent where $\textbf{U}^{\mu} = \begin{pmatrix} \sqrt{\mu} \textbf{V} \\ \text{w} \end{pmatrix} = \textbf{U}^{\mu}[\epsilon \zeta, \beta b](\Umus,\bm{\omega})$ is the unique solution in $H^{1}(\Omega_{t})$ of

\begin{equation}\label{div_curl_formulation}
\left\{
\begin{aligned}
&\text{curl}^{\mu, \gamma} \; \textbf{U}^{\mu} = \mu \bm{\omega} \text{ in } \Omega_{t},\\
&\text{div}^{\mu, \gamma} \; \textbf{U}^{\mu} = 0 \text{ in } \Omega_{t},\\
&\left(\underline{\textbf{V}} + \epsilon \underline{\text{w}} \nabla^{\gamma} \zeta \right)_{|z=\epsilon \zeta} = \Umus,\\
&\textbf{U}_{b}^{\mu} \cdot \textbf{N}^{\mu, \gamma}_{b} = 0,
\end{aligned}
\right.
\end{equation}

\noindent and with the following constraint

\begin{equation}\label{constraint_vorticity}
\nabla^{\perp} \cdot \Umus = \underline{\bm{\omega}} \cdot \textbf{N}^{\mu, \gamma}. 
\end{equation}

\begin{remark}\label{gamma_equal_zero}
\noindent When, $\bm{\omega}=0$ and $\text{Ro}=+\infty$, we get the irrotational water waves equations (see Remark 2.4 in \cite{Castro_Lannes_vorticity}). In particular in this situation, when $\gamma = 0$ we can check that the velocity \upshape$\textbf{U}^{\mu}$ \itshape becomes two dimensional : \upshape$\textbf{U}^{\mu} = \left( \sqrt{\mu} \textbf{V}_{x} , 0 , \text{w} \right)^{t}$\itshape. This is not the case when $\bm{\omega} \neq 0$. Even if $\gamma = 0$, the vorticity transfers energy from \upshape $\textbf{V}_{x}$ \itshape to \upshape $\textbf{V}_{y}$\itshape. The only way to get a two dimensional speed is to assume that $\omega = \left( 0 , \omega_{y} , 0 \right)^{t}$ (see for instance \cite{lannes_marche}).
\end{remark}

\begin{remark}\label{contrainst_vorti_initially}
\noindent Notice that if \upshape $\left(\zeta, \Umus, \bm{\omega} \right)$ \itshape is a solution of the Castro-Lannes system \eqref{castro_lannes_formulation}, \upshape $\nabla^{\perp} \cdot \Umus$ \itshape satisfies the equation

\upshape
\begin{equation*}
\partial_{t} \nabla^{\perp} \cdot \Umus + \nabla^{\gamma} \cdot \left(\epsilon \underline{\bm{\omega}}  \cdot  \textbf{N}^{\mu, \gamma} \underline{\textbf{V}}^{\perp}   + \frac{\epsilon}{\text{Ro}} \underline{\textbf{V}} \right) = 0.
\end{equation*}
\itshape

\noindent Furthermore, by taking the trace of the third equation of the Castro-Lannes system \eqref{castro_lannes_formulation}, we can see that \upshape $\underline{\bm{\omega}}  \cdot  \textbf{N}^{\mu, \gamma}$ \itshape satisfies the equation

\upshape
\begin{equation*}
\partial_{t} \left( \underline{\bm{\omega}}  \cdot  \textbf{N}^{\mu, \gamma} \right) + \nabla^{\gamma} \cdot \left(\epsilon \underline{\bm{\omega}}  \cdot  \textbf{N}^{\mu, \gamma} \underline{\textbf{V}}^{\perp}   + \frac{\epsilon}{\text{Ro}} \underline{\textbf{V}} \right) = 0,
\end{equation*}
\itshape

\noindent Hence, the constraint \eqref{constraint_vorticity} is propagated by the equations.
\end{remark}

\noindent We add a technical assumption. We assume that the water depth is bounded from below by a positive constant

\begin{equation}\label{nonvanishing}
  \exists \, h_{\min} > 0 \text{ ,  } 1 + \epsilon \zeta - \beta b \geq h_{\min}.
\end{equation} 

\noindent We also suppose that the dimensionless parameters satisfy

\begin{equation}\label{constraints_parameters}
\exists \mu_{\max} \text{, } 0 < \mu \leq \mu_{\max} \text{, } 0 < \epsilon \leq 1 \text{, } 0 \leq \gamma \leq 1 \text{, } 0 \leq \beta \leq 1 \text{ and } \frac{\epsilon}{\text{Ro}} \leq 1.
\end{equation}

\begin{remark}\label{condition_coriolis}
\noindent We have $\frac{\epsilon}{\text{Ro}} = \frac{fL}{\sqrt{gH}}$. As said in \cite{my_coriolis}, it is quite reasonable to assume that $\frac{\epsilon}{\text{Ro}} \leq 1$ since for water waves, the typical rotation speed due to the Coriolis forcing is less than the typical water wave celerity (see for instance \cite{pedlosky}, \cite{gill}, \cite{leblond}).
\end{remark}

\subsection{Useful results}\label{useful_results}

\noindent In this paper, we fully justify different asymptotic models of the water waves equations. Then, we have to define the notion of consistence (see for instance \cite{Lannes_ww}). 

\begin{definition}\label{consistence}
The Castro-Lannes equations \eqref{castro_lannes_formulation} are consistent of order $\mathcal{O} \left( \mu^{k} \right)$ with a system of equations $S$ for $\zeta$ and \upshape $\overline{\textbf{V}}$ \itshape if for all sufficiently smooth solutions \upshape $\left(\zeta, \Umus, \bm{\omega} \right)$ \itshape of the Castro-Lannes equations \eqref{castro_lannes_formulation} , the pair \upshape $\left(\zeta, \overline{\textbf{V}}[\epsilon \zeta, \beta b] \left(\Umus, \bm{\omega} \right) \right)$ (defined in \eqref{V_average}) \itshape solves $S$ up to a residual of order $\mathcal{O} \left( \mu^{k} \right)$. 
\end{definition}

\noindent We also need an existence result for the Castro-Lannes formulation \eqref{castro_lannes_formulation}. This is the purpose of the next theorem proven in \cite{my_coriolis}. We recall that the existence of the water waves equations is always under the so-called Rayleigh-Taylor condition assuming the positivity of the Rayleigh-Taylor coefficient $\mathfrak{a}$ (see Part 3.4.5 in \cite{Lannes_ww} for the link between $\mathfrak{a}$ and the Rayleigh-Taylor condition or \cite{my_coriolis})  where

\begin{equation}\label{rayleigh_taylor_coefficient}
\mathfrak{a}: = \mathfrak{a}[\epsilon \zeta, \beta b](\Umus, \bm{\omega}) = 1 + \epsilon \left( \partial_{t} + \epsilon \underline{\textbf{V}}[\epsilon \zeta, \beta b](\Umus, \bm{\omega}) \cdot \nabla \right) \underline{\text{w}}[\epsilon \zeta, \beta b](\Umus, \bm{\omega}).
\end{equation}

\noindent Notice that in \cite{my_coriolis} we explain how we can define initially the Rayleigh-Taylor coefficient $\mathfrak{a}$.

\begin{thm}\label{existence_ww}
\noindent Let \upshape$A > 0$, $\textbf{N} \geq 5$, $b \in H^{N+2}(\RR)$. We assume that

\upshape
\begin{equation*}
\left( \zeta_{0}, (\Umus)_{0}, \bm{\omega}_{0} \right) \in H^{N}(\RR) \times H^{N}(\RR) \times H^{N-1}(\Omega_{0}),
\end{equation*}
\itshape

\noindent that \upshape$\nabla^{\mu, \gamma} \cdot \bm{\omega}_{0} =0$ \itshape and that Condition \eqref{constraint_vorticity} is satisfied. We suppose that \upshape$\left( \epsilon,\beta,\gamma,\mu,\text{Ro} \right)$ \itshape satisfy \eqref{constraints_parameters}. Finally, we assume that

\upshape
\begin{equation*}
\exists \, h_{\min} \text{, } \mathfrak{a}_{\min} > 0 \text{ ,  } \epsilon \zeta _{0}+ 1 - \beta b \geq h_{\min} \text{  and  } \mathfrak{a}[\epsilon \zeta_{0}, \beta b]((\Umus)_{0}, \bm{\omega}_{0}) \geq \mathfrak{a}_{\min},
\end{equation*}
\itshape

\noindent and

\upshape
\begin{equation*}
\lver \zeta_{0} \rver_{H^{N}} + \lver (\Umus)_{0} \rver_{H^{N}} + \llver \bm{\omega}_{0} \rrver_{H^{N-1}} \leq A.
\end{equation*}
\itshape
 
\noindent Then, there exists \upshape$T > 0$ \itshape and a unique classical solution \upshape$\left( \zeta, \Umus, \bm{\omega} \right) $ \itshape to the Castro-Lannes \eqref{castro_lannes_formulation} with initial data \upshape$\left( \zeta_{0}, (\Umus)_{0}, \bm{\omega}_{0} \right)$\itshape. Moreover, 

\upshape
\begin{equation*}
T =  \frac{T_{0}}{\max(\epsilon, \beta, \frac{\epsilon}{\text{Ro}})} \text{ , }  \frac{1}{T_{0}} = c^{1} \text{ and  } \underset{[0,T]}{\max} \lver \zeta(t) \rver_{H^{N}} + \lver \Umus(t) \rver_{H^{N}} + \llver \bm{\omega}(t) \rrver_{H^{N-1}}  = c^{2},
\end{equation*}
\itshape

\noindent with \upshape $c^{j} = C \left(A, \mu_{\max}, \frac{1}{h_{\min}}, \frac{1}{\mathfrak{a}_{\min}}, \left\lvert b \right\rvert_{H^{N+2}} \right)$.\itshape
\end{thm}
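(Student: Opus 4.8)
The plan is to run the quasilinear energy method for the Castro--Lannes system \eqref{castro_lannes_formulation}, carrying the dependence on $\left( \epsilon, \beta, \gamma, \mu, \text{Ro} \right)$ through every estimate. The first ingredient is a quantitative study of the velocity-reconstruction map $\textbf{U}^{\mu}[\epsilon \zeta, \beta b](\Umus, \bm{\omega})$ defined by the div-curl problem \eqref{div_curl_formulation}. I would flatten the fluid domain $\Omega_{t}$ onto the fixed strip $\{ -1 < z < 0 \}$ by the regularizing diffeomorphism built from $\epsilon \zeta$ and $\beta b$, recast \eqref{div_curl_formulation} as a variable-coefficient elliptic system on the strip, and prove energy estimates of the form $\lVert \textbf{U}^{\mu} \rVert \lesssim C\left( \tfrac{1}{h_{\min}}, \lvert \zeta \rvert_{H^{s}}, \lvert b \rvert_{H^{s}} \right) \left( \lver \Umus \rver_{H^{s-1/2}} + \lVert \bm{\omega} \rVert_{H^{s-1}} \right)$, being careful that the powers of $\sqrt{\mu}$ in $\nabla^{\mu,\gamma}$ enter so that no negative power of $\mu$ ever appears. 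This makes $\textbf{U}^{\mu}$, and in particular the trace quantities $\underline{\textbf{V}}$, $\underline{\text{w}}$, $\underline{\bm{\omega}} \cdot \textbf{N}^{\mu,\gamma}$ appearing in \eqref{castro_lannes_formulation}, smooth functions of the unknowns with controlled Lipschitz dependence.

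Next I would quasilinearize. Applying $\partial^{\alpha}$ for $\lvert \alpha \rvert \le N$ to \eqref{castro_lannes_formulation}, the terms that threaten to lose a derivative are the ones coupling $\partial^{\alpha}\zeta$ to the top-order velocity through the surface elevation. These are tamed by an Alinhac good unknown: the differentiated surface-velocity combination is corrected by a term proportional to $\partial^{\alpha}\zeta \, \partial_{z}$ of the velocity, after which the principal part reorganizes into a symmetric transport operator plus the Rayleigh--Taylor term $\mathfrak{a}\,\nabla^{\gamma}\zeta$, with $\mathfrak{a}$ given by \eqref{rayleigh_taylor_coefficient}. The natural energy functional is then
\[
E^{N} = \sum_{\lvert \alpha \rvert \le N} \Big[ \left( \mathfrak{a}\,\partial^{\alpha}\zeta, \partial^{\alpha}\zeta \right)_{2} + \lvert \partial^{\alpha}\Umus \rvert_{2}^{2} \Big] + \lVert \bm{\omega} \rVert_{H^{N-1}}^{2},
\]
where the weight $\mathfrak{a}$ is exactly what the good-unknown computation produces and where the lower bound $\mathfrak{a} \ge \mathfrak{a}_{\min}$ guarantees that $E^{N}$ is equivalent to $\lver \zeta \rver_{H^{N}}^{2} + \lver \Umus \rver_{H^{N}}^{2} + \lVert \bm{\omega} \rVert_{H^{N-1}}^{2}$.

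The core of the argument is the differential inequality for $E^{N}$. After using the symmetry of the principal part and the elliptic bounds from the first step to close all commutators at the level $N$, one arrives at $\tfrac{d}{dt} E^{N} \le \max\!\left( \epsilon, \beta, \tfrac{\epsilon}{\text{Ro}} \right) C\!\left( E^{N} \right) E^{N}$, which integrates to the stated existence time $T = T_{0} / \max\!\left( \epsilon, \beta, \tfrac{\epsilon}{\text{Ro}} \right)$ and to the bound on $\max_{[0,T]}$ of the norms. A decisive simplification is that the Coriolis contribution $\tfrac{\epsilon}{\text{Ro}} \underline{\textbf{V}}^{\perp}$ is \emph{skew}: since $\textbf{V}\cdot\textbf{V}^{\perp}=0$, it produces no energy growth at leading order and only enters through lower-order commutators carrying an explicit factor $\tfrac{\epsilon}{\text{Ro}}$, which is precisely why it contributes to the time scale through $\max\!\left( \epsilon, \beta, \tfrac{\epsilon}{\text{Ro}} \right)$ rather than destroying the estimate; the vorticity stretching and $\tfrac{\epsilon}{\mu\,\text{Ro}}\partial_{z}\textbf{U}^{\mu}$ terms in the third equation are absorbed using $\lVert \bm{\omega} \rVert_{H^{N-1}}$ together with the elliptic control of $\partial_{z}\textbf{U}^{\mu}$.

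With the a priori estimate in hand, existence follows by a standard scheme: regularize \eqref{castro_lannes_formulation} by mollifying the transport and reconstruction operators to obtain an ODE with values in a Banach space, solve it, derive the \emph{uniform} bound $E^{N} \le A'$ on a time interval independent of the regularization from the inequality above, and pass to the limit, checking along the way that both $\nabla^{\mu,\gamma}\cdot\bm{\omega}=0$ and the constraint \eqref{constraint_vorticity} are propagated (the latter exactly by Remark \ref{contrainst_vorti_initially}). Uniqueness is obtained by an energy estimate on the difference of two solutions carried out one derivative lower, where no good unknown is needed. I expect the main obstacle to be the a priori estimate of the third paragraph: producing the symmetric, $\mathfrak{a}$-coercive structure via the good unknown while simultaneously keeping every power of $\mu$ correct, so that the bound is genuinely uniform as $\mu \to 0$ and the Coriolis and vorticity terms are absorbed with the right small prefactors rather than generating spurious growth.
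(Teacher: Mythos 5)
The paper does not actually prove this theorem: it is quoted verbatim as a result established in the reference \cite{my_coriolis}, so there is no in-paper argument to compare with step by step. That said, your outline reproduces the architecture of the proof given there (which follows the Castro--Lannes methodology): flattening of $\Omega_t$ onto a fixed strip and $\mu$-uniform elliptic estimates for the div-curl problem \eqref{div_curl_formulation}, quasilinearization with Alinhac good unknowns producing the Rayleigh--Taylor coefficient $\mathfrak{a}$ of \eqref{rayleigh_taylor_coefficient}, an $\mathfrak{a}$-weighted energy at order $N$ plus $\lVert \bm{\omega} \rVert_{H^{N-1}}$, an energy inequality with the prefactor $\max(\epsilon,\beta,\frac{\epsilon}{\text{Ro}})$ yielding the large existence time, an iteration/regularization scheme, propagation of $\nabla^{\mu,\gamma}\cdot\bm{\omega}=0$ and of \eqref{constraint_vorticity}, and uniqueness one derivative lower. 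Two points deserve sharpening relative to your sketch. First, the claim that $\underline{\bm{\omega}}\cdot\textbf{N}^{\mu,\gamma}$ is controlled ``with Lipschitz dependence'' through the elliptic analysis glosses over a real difficulty: a trace estimate on $\bm{\omega}\in H^{N-1}(\Omega_t)$ would lose half a derivative; the actual proof uses the constraint \eqref{constraint_vorticity} to replace this trace by $\nabla^{\perp}\cdot\Umus$, which is exactly why that constraint is built into the formulation and into the hypotheses of the theorem, and not only something to be ``checked along the way''. Second, the skewness of the Coriolis term is not what produces the time scale; the decisive fact is the explicit $\frac{\epsilon}{\text{Ro}}$ prefactor in front of $\underline{\textbf{V}}^{\perp}$ (and the fact that $\frac{\epsilon}{\mu\,\text{Ro}}\partial_z\textbf{U}^{\mu}$ is effectively of size $\frac{\epsilon}{\text{Ro}}$ because $\partial_z\textbf{U}^{\mu}=\mathcal{O}(\mu)$ by the div-curl system), so every Coriolis contribution already carries the factor entering $\max(\epsilon,\beta,\frac{\epsilon}{\text{Ro}})$. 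With those two corrections your plan is consistent with the proof the paper appeals to, though of course it remains a plan: the $\mu$-uniform elliptic estimates and the commutator analysis that it defers are where the bulk of the work in \cite{my_coriolis} lies.
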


\noindent Thanks to this theorem, we know that the quantities $\zeta$, $\Umus$, $\omega$ and then $\overline{\textbf{V}}$ (defined in \eqref{V_average}) remain bounded uniformly with respect to the small parameters during the time evolution of the flow, which will be essential to derive rigorously asymptotic models.

\section{Boussinesq-Coriolis equations when $\gamma = 0$}\label{derive_boussi}

\noindent This part is devoted to the derivation and the full justification of the Boussinesq-Coriolis equations \eqref{boussinesq_eq} under a Coriolis forcing and with $\gamma = 0$. These equations are an order $\mathcal{O}(\mu^{2})$ approximation of the water waves equations under the assumption that $\epsilon, \beta = \mathcal{O}(\mu)$. The corresponding regime is called \textit{long wave regime} or Boussinesq regime. Contrary to \cite{Castro_Lannes_shallow_water}, whose approach is based on the averaged Euler equations, our derivation is based on the Castro-Lannes equations \eqref{castro_lannes_formulation}. Then, the asymptotic regime is

\begin{equation}\label{bouss_regime}
\mathcal{A}_{\text{Bouss}} = \left\{ \left( \epsilon, \beta, \gamma, \mu, \text{Ro} \right), 0 \leq \mu \leq \mu_{0},  \frac{\epsilon}{\text{Ro}} \leq 1, \epsilon =  \mathcal{O}(\mu), \beta = \mathcal{O} \left( \mu \right), \gamma = 0 \right\}.
\end{equation}

\noindent 

\begin{remark}
\noindent In fact, we can relax the assumption $\gamma = 0$ by only assuming that $\gamma = \mathcal{O} \left( \mu^{2} \right)$ since we neglect all the terms of order $\mathcal{O}(\mu^{2})$ in the following.
\end{remark}

\noindent We introduce the water depth

\begin{equation}
h(t,X) = 1 + \epsilon \zeta(t,X) - \beta b(X),
\end{equation}

\noindent and the averaged horizontal velocity

\begin{equation}\label{V_average}
\overline{\textbf{V}} = \overline{\textbf{V}}[\epsilon \zeta, \beta b](\Umus,\bm{\omega})(t,X) = \frac{1}{h(t,X)} \int_{z=-1+\beta b(X)}^{\epsilon \zeta(t,X)} \textbf{V}[\epsilon \zeta, \beta b](\Umus,\bm{\omega})(t,X,z) dz.
\end{equation}

\noindent More generally, if $u$ is a function defined in $\Omega$, $\overline{u}$ is its average and $u^{\ast} = u - \overline{u}$. In the following we denote $\textbf{V} = \left(u, v \right)^{t}$. As noticed in \cite{Castro_Lannes_vorticity}, we have to introduce the "shear" velocity

\begin{equation}\label{V_shear}
\textbf{V}_{\text{sh}} = \textbf{V}_{\text{sh}}[\epsilon \zeta, \beta b](\Umus,\bm{\omega})(t,X) = \left(u_{\text{sh}}, v_{\text{sh}} \right) = \int_{z}^{\epsilon \zeta } \bm{\omega}_{h}^{\perp} 
\end{equation}

\noindent and its average

\begin{equation*}
\textbf{Q} = \left(\text{Q}_{x}, \text{Q}_{y} \right)^{t} = \overline{\textbf{V}}_{\text{sh}} = \frac{1}{h} \int_{-1+\beta b}^{\epsilon \zeta} \int_{z'}^{\epsilon \zeta} \bm{\omega}_{h}^{\perp}.
\end{equation*}

\noindent When $\gamma = 0$, $\Umus = \left(\underline{u} + \epsilon \underline{\text{w}} \partial_{x} \zeta, \underline{v} \right)^{t}$. Hence in the following, we denote 

\begin{equation}\label{def_us}
\us = \underline{u} + \epsilon \underline{\text{w}} \partial_{x} \zeta.
\end{equation}

\noindent In this section, we do the asymptotic expansion with respect to $\mu$ of different quantities.  In the following, we denote by $R$ a remainder whose exact value has no importance and which is bounded uniformly with respect to $\mu$.

\begin{remark}\label{control_remainder}
\noindent Notice that thanks to Theorem \ref{existence_ww}, we know that the quantities \upshape $\zeta$, $\Umus$, $\omega$ , $\overline{\textbf{V}}$ and $\textbf{U}$ \itshape remain bounded uniformly with respect to the small parameters during the time evolution of the flow. Furthermore, \upshape $\partial_{t} \zeta$, $\partial_{t} \Umus$, $\partial_{t} \omega$ and $\partial_{t} \textbf{U}$ \itshape also remain bounded uniformly with respect to the small parameters during this time.
\end{remark}

\subsection{Asymptotic expansion for the velocity and useful identities}

\noindent In this part, we give an expansion of the velocity with respect to $\mu$. First we recall the following fact (the proof is a small adaptation of Proposition 4.2 in \cite{my_coriolis}).

\begin{prop}\label{mean_eq}
\noindent If \upshape $\left(\zeta, \Umus, \bm{\omega} \right)$ \itshape satisfy the Castro-Lannes system \eqref{castro_lannes_formulation}, we have

\upshape
\begin{equation*}
\underline{\textbf{U}}^{\mu} \cdot \textbf{N}^{\mu, \gamma} = - \mu \nabla^{\gamma} \cdot \left(h \overline{\textbf{V}} \right).
\end{equation*}
\itshape

\end{prop}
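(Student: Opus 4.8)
The plan is to obtain the identity by integrating the incompressibility condition across the water column and identifying the resulting boundary contributions. First I would observe that, straight from the definition \eqref{V_average}, one has $h \overline{\textbf{V}} = \int_{-1+\beta b}^{\epsilon \zeta} \textbf{V} \, dz$. Applying $\nabla^{\gamma} \cdot$ and using the Leibniz rule for differentiation under an integral with variable endpoints (the upper endpoint being $z = \epsilon \zeta$ and the lower one $z = -1 + \beta b$) yields
\[
\nabla^{\gamma} \cdot \left( h \overline{\textbf{V}} \right) = \int_{-1+\beta b}^{\epsilon \zeta} \nabla^{\gamma} \cdot \textbf{V} \, dz + \epsilon \, \underline{\textbf{V}} \cdot \nabla^{\gamma} \zeta - \beta \, \textbf{V}_{b} \cdot \nabla^{\gamma} b,
\]
where the two boundary terms come respectively from the surface and the bottom, and the minus sign on the last term reflects the derivative of the lower limit.

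Next I would use the divergence-free equation in \eqref{div_curl_formulation}. Since $\textbf{U}^{\mu} = \left( \sqrt{\mu} \textbf{V}, \text{w} \right)^{t}$, the condition $\text{div}^{\mu,\gamma} \textbf{U}^{\mu} = 0$ reads $\mu \nabla^{\gamma} \cdot \textbf{V} + \partial_{z} \text{w} = 0$, so that $\nabla^{\gamma} \cdot \textbf{V} = -\frac{1}{\mu} \partial_{z} \text{w}$. Substituting this into the integral and applying the fundamental theorem of calculus collapses the vertical integral to $-\frac{1}{\mu} \left( \underline{\text{w}} - \text{w}_{b} \right)$. Multiplying the whole identity by $-\mu$ then gives
\[
- \mu \nabla^{\gamma} \cdot \left( h \overline{\textbf{V}} \right) = \underline{\text{w}} - \text{w}_{b} - \epsilon \mu \, \underline{\textbf{V}} \cdot \nabla^{\gamma} \zeta + \beta \mu \, \textbf{V}_{b} \cdot \nabla^{\gamma} b.
\]

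To conclude I would eliminate the bottom contributions using the impermeability condition $\textbf{U}^{\mu}_{b} \cdot \textbf{N}^{\mu,\gamma}_{b} = 0$ from \eqref{div_curl_formulation}. Expanding $\textbf{U}^{\mu}_{b} = \left( \sqrt{\mu} \textbf{V}_{b}, \text{w}_{b} \right)^{t}$ against $\textbf{N}^{\mu,\gamma}_{b} = \left( -\beta \sqrt{\mu} \nabla^{\gamma} b, 1 \right)^{t}$ gives exactly $\text{w}_{b} = \beta \mu \, \textbf{V}_{b} \cdot \nabla^{\gamma} b$, so the two bottom terms cancel and the right-hand side reduces to $\underline{\text{w}} - \epsilon \mu \, \underline{\textbf{V}} \cdot \nabla^{\gamma} \zeta$. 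This is precisely $\underline{\textbf{U}}^{\mu} \cdot \textbf{N}^{\mu,\gamma}$, as one sees by expanding $\underline{\textbf{U}}^{\mu} = \left( \sqrt{\mu} \underline{\textbf{V}}, \underline{\text{w}} \right)^{t}$ against $\textbf{N}^{\mu,\gamma} = \left( -\epsilon \sqrt{\mu} \nabla^{\gamma} \zeta, 1 \right)^{t}$, which proves the proposition.

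There is no real analytic obstacle here: the statement is a pointwise algebraic consequence of the div-curl characterization \eqref{div_curl_formulation}, valid for the smooth solutions provided by Theorem \ref{existence_ww}. The only place demanding care is the bookkeeping of the powers of $\mu$ and $\sqrt{\mu}$ alongside the parameters $\epsilon$ and $\beta$, together with the sign of the Leibniz boundary term at the lower endpoint $-1 + \beta b$. Tracking these correctly is exactly what makes the surface term reproduce $\epsilon \mu \, \underline{\textbf{V}} \cdot \nabla^{\gamma} \zeta$ and the bottom terms cancel against the impermeability condition, so I expect the bulk of the effort to be careful verification rather than any substantive difficulty.
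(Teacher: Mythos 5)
Your proof is correct, and it is essentially the argument the paper relies on: the paper simply cites Proposition 4.2 of its reference \cite{my_coriolis}, whose proof is exactly this vertical integration of $\text{div}^{\mu,\gamma}\,\textbf{U}^{\mu}=0$ combined with the Leibniz rule and the impermeability condition $\textbf{U}^{\mu}_{b}\cdot\textbf{N}^{\mu,\gamma}_{b}=0$. Your bookkeeping of the $\mu$, $\epsilon$, $\beta$ factors and of the boundary terms is accurate, so nothing is missing.
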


\noindent This proposition, coupled with the first equation of \eqref{castro_lannes_formulation}, gives us an equation that links $\zeta$ to $\overline{\textbf{V}}$. In particular, when $\gamma = 0$, we get an equation that links $\zeta$ to $\overline{u}$. We also need an expansion of $u$ and $v$ with respect to $\mu$. The following proposition is for $v$.

\begin{prop}\label{equation_v}
\noindent If \upshape $\left(\zeta, \Umuszero, \bm{\omega} \right)$ \itshape satisfy the Castro-Lannes system \eqref{castro_lannes_formulation}, we have

\upshape
\begin{equation*}
\begin{aligned}
&v = \overline{v} + \sqrt{\mu} \vastsh,\\
&\underline{v} = \overline{v} - \sqrt{\mu} \text{Q}_{y},\\
&\underline{\bm{\omega}} \cdot \textbf{N}^{\mu,0} = \partial_{x} \underline{v}
\end{aligned}
\end{equation*}
\itshape

\noindent and 

\upshape
\begin{equation*}
\partial_{t} \underline{v} + \epsilon \underline{u} \partial_{x} \underline{v} + \frac{\epsilon}{\text{Ro}} \underline{u} = 0.
\end{equation*}
\itshape

\end{prop}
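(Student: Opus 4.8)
The plan is to treat all four statements as \emph{exact} algebraic identities — no $\mu$-expansion is needed here — reading them off from the $\gamma=0$ form of the vorticity and from the Castro-Lannes system \eqref{castro_lannes_formulation}. First I would compute the components of $\bm{\omega} = \frac{1}{\mu}\,\text{curl}^{\mu,0}\textbf{U}^{\mu}$ that I actually need, where $\textbf{U}^{\mu} = (\sqrt{\mu}\,u, \sqrt{\mu}\,v, \text{w})^{t}$. Since at $\gamma=0$ the operator $\nabla^{\mu,0}_{\!X,z} = (\sqrt{\mu}\,\partial_{x}, 0, \partial_{z})^{t}$ kills every $\partial_{y}$-derivative, a direct calculation gives the horizontal $x$-component $\omega_{x} = -\frac{1}{\sqrt{\mu}}\partial_{z}v$ and the vertical component $\omega_{z} = \partial_{x}v$.

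For the first two identities I would use the definition \eqref{V_shear} of the shear velocity, whose second component is $v_{\text{sh}} = \int_{z}^{\epsilon\zeta}\omega_{x}$. Inserting $\omega_{x} = -\frac{1}{\sqrt{\mu}}\partial_{z}v$ and integrating gives $\sqrt{\mu}\,v_{\text{sh}} = v - \underline{v}$, that is $v = \underline{v} + \sqrt{\mu}\,v_{\text{sh}}$. Averaging this relation vertically, and recalling $\overline{v_{\text{sh}}} = \text{Q}_{y}$, yields $\overline{v} = \underline{v} + \sqrt{\mu}\,\text{Q}_{y}$, which is the second identity; substituting $\underline{v} = \overline{v} - \sqrt{\mu}\,\text{Q}_{y}$ back into the previous relation produces $v = \overline{v} + \sqrt{\mu}\,\vastsh$, the first identity.

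The third identity is exactly the vorticity constraint \eqref{constraint_vorticity} specialized to $\gamma=0$, but I would also verify it by hand to pin down the signs. With $\textbf{N}^{\mu,0} = (-\epsilon\sqrt{\mu}\,\partial_{x}\zeta, 0, 1)^{t}$ the $\omega_{y}$-component drops out and one finds $\underline{\bm{\omega}}\cdot\textbf{N}^{\mu,0} = \epsilon\,\partial_{x}\zeta\,\underline{\partial_{z}v} + \underline{\partial_{x}v}$; applying the chain rule to $\underline{v} = v(\,\cdot\,,\epsilon\zeta(\,\cdot\,))$ shows this equals $\partial_{x}\underline{v}$.

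Finally, for the evolution equation I would project the second equation of \eqref{castro_lannes_formulation} onto the $y$-axis. The point that makes this clean is that at $\gamma=0$ every term of the form $\nabla^{\gamma}(\cdots)$ — the gradients of $\zeta$, of $|\Umus|^{2}$ and of the $\underline{\text{w}}^{2}$-bracket — has vanishing second component, so only the vorticity and Coriolis terms survive: writing $\underline{\textbf{V}}^{\perp} = (-\underline{v}, \underline{u})^{t}$, the $y$-component reads $\partial_{t}\underline{v} + \epsilon\,(\underline{\bm{\omega}}\cdot\textbf{N}^{\mu,0})\,\underline{u} + \frac{\epsilon}{\text{Ro}}\,\underline{u} = 0$. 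Substituting the third identity $\underline{\bm{\omega}}\cdot\textbf{N}^{\mu,0} = \partial_{x}\underline{v}$ converts the nonlinear vorticity term into the advection term $\epsilon\,\underline{u}\,\partial_{x}\underline{v}$, which gives the claim. The only tedious part is the bookkeeping of the $\sqrt{\mu}$- and $\mu$-powers in the nondimensional curl; the genuine structural observation, and the step I would be most careful about, is the decoupling of the $y$-equation at $\gamma=0$ combined with the use of the constraint to turn the vorticity term into a pure transport term, since a sign slip there would alter the entire equation.
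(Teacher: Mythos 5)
Your proposal is correct and follows essentially the same route as the paper: read $\bm{\omega}_x=-\frac{1}{\sqrt{\mu}}\partial_z v$ and $\bm{\omega}_z=\partial_x v$ off the curl equation, integrate in $z$ to relate $v$, $\underline{v}$, $\overline{v}$ via $\textbf{V}_{\text{sh}}$ and $\text{Q}_y$, and take the second component of the momentum equation of \eqref{castro_lannes_formulation} at $\gamma=0$, using the identity $\underline{\bm{\omega}}\cdot\textbf{N}^{\mu,0}=\partial_x\underline{v}$ to turn the vorticity term into transport. Your explicit chain-rule check of that identity (keeping the $\epsilon\partial_x\zeta\,\underline{\partial_z v}$ contribution) is slightly more detailed than the paper's one-line justification, but it is the same argument.
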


\begin{proof}

\noindent Since $\text{curl}^{\mu, 0} \; \textbf{U}^{\mu} = \mu \bm{\omega}$, we get that

\begin{equation}\label{interm}
\sqrt{\mu} \bm{\omega}_{x} = - \partial_{z} v \text{   and   } \bm{\omega}_{z} = \partial_{x} v.
\end{equation}

\noindent Then, plugging the ansatz $v = \overline{v} + \sqrt{\mu} v_{1}$ in the first equation and using the fact that the average of $v_{1}$ is equal to $0$ we get

\begin{equation*}
\underline{v} = \overline{v} - \sqrt{\mu} \frac{1}{h} \int_{-1+\beta b}^{\epsilon \zeta} \int_{z'}^{\epsilon \zeta} \bm{\omega}_{x}.
\end{equation*}

\noindent Furthermore, from the equation on the second component of $\Umuszero$, we have

\begin{equation*}
\partial_{t} \underline{v} + \epsilon \bm{\omega} \cdot \textbf{N}^{\mu,0} \underline{u} + \frac{\epsilon}{\text{Ro}} \underline{u} = 0.
\end{equation*}

\noindent Then, using the second equation of \eqref{interm}, we get that $\underline{\bm{\omega}} \cdot \textbf{N}^{\mu,0} = \partial_{x} \underline{v}$ and the result follows.

\end{proof}

\noindent The expansion of $u$ is more complex and also involves an expansion of $\text{w}$. It is the purpose of the following proposition. But before, we also have to introduce the following operators

\begin{equation*}
T \left[\epsilon \zeta, \beta b \right] f = \int_{z}^{\epsilon \zeta} \partial^{2}_{x} \int_{-1+\beta b}^{z'} f \text{     and     } T^{\ast} \left[\epsilon \zeta, \beta b \right] f = \left(  T \left[\epsilon \zeta, \beta b \right] f \right)^{\ast},
\end{equation*}

\noindent When no confusion is possible, we denote $T = T \left[\epsilon \zeta, \beta b \right]$ and $T^{\ast} = T^{\ast} \left[\epsilon \zeta, \beta b \right]$.

\begin{prop}\label{equation_u}
\noindent If \upshape $\left(\zeta, \Umuszero, \bm{\omega} \right)$ \itshape satisfy the Castro-Lannes system \eqref{castro_lannes_formulation}, we have

\upshape
\begin{equation*}
\begin{aligned}
&u = \overline{u} + \sqrt{\mu} u_{\text{sh}}^{\ast} + \mu T^{\ast} \overline{u} + \mu^{\frac{3}{2}} T^{\ast} u_{\text{sh}}^{\ast} + \mu^{2} R,\\
&\underline{u} = \overline{u} - \sqrt{\mu} \text{Q}_{x} + \mu \underline{T^{\ast} \overline{u}} - \mu^{\frac{3}{2}}  \overline{T u_{\text{sh}}^{\ast}} + \mu^{2} R,
\end{aligned}
\end{equation*}
\itshape

\noindent where $T^{\ast} \overline{u} = - \frac{1}{2} \left( \left[z+1-\beta b \right]^{2} - \frac{h^{2}}{3} \right) \partial_{x}^{2} \overline{u} + \beta R$. We also have

\upshape
\begin{equation*}
\begin{aligned}
&\text{w} = - \mu \partial_{x} \left(\int_{-1+\beta b}^{z} u \right),\\
&\underline{\text{w}} = - \mu h \partial_{x} \overline{u} - \mu^{\frac{3}{2}} \partial_{x} h \text{Q}_{x} + \max(\mu^{2}, \beta \mu) R,
\end{aligned}
\end{equation*}
\itshape

\noindent and 

\upshape
\begin{equation*}
\us = \overline{u} - \sqrt{\mu}  \text{Q}_{x} - \mu \frac{1}{3h} \partial_{x} \left( h^{3} \partial_{x} \overline{u} \right)- \mu^{\frac{3}{2}} \left( \overline{T u^{\ast}_{\text{sh}}} + \text{Q}_{x} \left( \partial_{x} h \right)^{2} \right) + \max(\mu^{2}, \beta \mu) R.
\end{equation*}
\itshape

\end{prop}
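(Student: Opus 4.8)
The plan is to read off two vertical-variable identities from the div-curl problem \eqref{div_curl_formulation} with $\gamma=0$ and then integrate them in $z$, controlling all remainders through the uniform bounds of Theorem \ref{existence_ww} and Remark \ref{control_remainder}. From $\text{div}^{\mu,0}\textbf{U}^{\mu}=0$ I get $\partial_{z}\text{w}=-\mu\partial_{x}u$, and from $\textbf{U}_{b}^{\mu}\cdot\textbf{N}_{b}^{\mu,0}=0$ the bottom value $\text{w}_{b}=\mu\beta u_{b}\partial_{x}b$; integrating and recognizing the Leibniz boundary term generated by the moving bottom gives \emph{exactly} $\text{w}=-\mu\partial_{x}\int_{-1+\beta b}^{z}u$. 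The second component of $\text{curl}^{\mu,0}\textbf{U}^{\mu}=\mu\bm{\omega}$ yields $\partial_{z}u=\partial_{x}\text{w}+\sqrt{\mu}\bm{\omega}_{y}$; substituting $\partial_{x}\text{w}=-\mu\partial_{x}^{2}\int_{-1+\beta b}^{z}u$ and using $\bm{\omega}_{y}=\partial_{z}u_{\text{sh}}$ (which follows from \eqref{V_shear}, since the first component of $\bm{\omega}_{h}^{\perp}$ is $-\bm{\omega}_{y}$), integration from $z$ to $\epsilon\zeta$ together with $u_{\text{sh}}(\epsilon\zeta)=0$ produces the exact implicit identity $u=\underline{u}+\sqrt{\mu}u_{\text{sh}}+\mu Tu$.

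Next I turn this into the stated expansion. Averaging over the column gives $\overline{u}=\underline{u}+\sqrt{\mu}\text{Q}_{x}+\mu\overline{Tu}$ (using $\overline{u_{\text{sh}}}=\text{Q}_{x}$), and subtracting the average yields the fluctuation form $u=\overline{u}+\sqrt{\mu}u_{\text{sh}}^{\ast}+\mu T^{\ast}u$. Since this is still implicit in $u$, I bootstrap: inserting $u=\overline{u}+O(\sqrt{\mu})$ into $T^{\ast}u$ twice gives $u=\overline{u}+\sqrt{\mu}u_{\text{sh}}^{\ast}+\mu T^{\ast}\overline{u}+\mu^{3/2}T^{\ast}u_{\text{sh}}^{\ast}+\mu^{2}T^{\ast}T^{\ast}u$, and the uniform bounds force $\mu^{2}T^{\ast}T^{\ast}u=\mu^{2}R$. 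Feeding the same expansion into $\overline{Tu}=\overline{T\overline{u}}+\sqrt{\mu}\overline{Tu_{\text{sh}}^{\ast}}+\mu R$ gives $\underline{u}$, where I rewrite $-\mu\overline{T\overline{u}}=\mu\underline{T^{\ast}\overline{u}}$ via the elementary identity $\underline{T^{\ast}f}=\underline{Tf}-\overline{Tf}=-\overline{Tf}$ (because $\underline{Tf}=0$). The explicit coefficient comes from evaluating $T$ on the $z$-independent $\overline{u}$: since $\int_{-1+\beta b}^{z'}\overline{u}=\overline{u}(z'+1-\beta b)$, one finds up to $\beta R$ (the Leibniz terms hitting $\beta b$) that $T\overline{u}=\tfrac{1}{2}[h^{2}-(z+1-\beta b)^{2}]\partial_{x}^{2}\overline{u}$, and subtracting its average $\tfrac{h^{2}}{3}\partial_{x}^{2}\overline{u}$ (from $\overline{(z+1-\beta b)^{2}}=\tfrac{h^{2}}{3}$) gives the claimed formula for $T^{\ast}\overline{u}$.

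The delicate part, which I expect to be the main obstacle, is the careful Leibniz bookkeeping at the moving surface and bottom for $\underline{\text{w}}$ and $\us$, since this is where the half-integer powers of $\mu$ and the $\text{Q}_{x}$ terms are created. Evaluating $\text{w}$ at $z=\epsilon\zeta$ one must keep in mind that $\partial_{x}$ acts at fixed $z$, so $\underline{\text{w}}=-\mu\int_{-1+\beta b}^{\epsilon\zeta}\partial_{x}u+\mu\beta u_{b}\partial_{x}b$; inserting the expansion, the $\overline{u}$-piece integrates to $-\mu h\partial_{x}\overline{u}$, while the $\sqrt{\mu}u_{\text{sh}}^{\ast}$-piece would vanish but for the boundary terms: from $\int_{-1+\beta b}^{\epsilon\zeta}u_{\text{sh}}^{\ast}=0$, Leibniz gives $\int\partial_{x}u_{\text{sh}}^{\ast}=-\underline{u_{\text{sh}}^{\ast}}\,\epsilon\partial_{x}\zeta+\ldots$ with $\underline{u_{\text{sh}}^{\ast}}=-\text{Q}_{x}$, which is exactly the origin of $-\mu^{3/2}\text{Q}_{x}\partial_{x}h$ (after using $\epsilon\partial_{x}\zeta=\partial_{x}h+\beta\partial_{x}b$ to push $\beta$-contributions into $\max(\mu^{2},\beta\mu)R$). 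Finally $\us=\underline{u}+\epsilon\underline{\text{w}}\partial_{x}\zeta$ is assembled by summing: $\epsilon\underline{\text{w}}\partial_{x}\zeta$ contributes at order $\mu$ the term $-\mu h\partial_{x}h\,\partial_{x}\overline{u}$, which combines with $-\mu\tfrac{h^{2}}{3}\partial_{x}^{2}\overline{u}$ into the divergence form $-\tfrac{\mu}{3h}\partial_{x}(h^{3}\partial_{x}\overline{u})$, and at order $\mu^{3/2}$ the term $-\mu^{3/2}\text{Q}_{x}(\partial_{x}h)^{2}$, which joins $-\mu^{3/2}\overline{Tu_{\text{sh}}^{\ast}}$; the leftover $\epsilon$- and $\beta$-pieces are absorbed into the remainder. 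The only real difficulty throughout is this disciplined tracking of half-integer powers together with the surface/bottom boundary contributions, whose coefficients all stay bounded by Theorem \ref{existence_ww}.
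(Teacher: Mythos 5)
Your proposal is correct and follows essentially the same route as the paper: divergence-free plus the bottom condition give the exact formula for $\text{w}$, the $y$-component of the curl relation gives the exact identity $u=\overline{u}+\sqrt{\mu}u_{\text{sh}}^{\ast}+\mu T^{\ast}u$, which is then iterated, $T^{\ast}\overline{u}$ is computed using that $\overline{u}$ is $z$-independent, and $\underline{\text{w}}$, $\us$ follow from Proposition \ref{mean_eq} and the expansion of $u$. Your explicit Leibniz bookkeeping at the surface and bottom (the origin of the $-\mu^{3/2}\partial_{x}h\,\text{Q}_{x}$ and $-\mu^{3/2}\text{Q}_{x}(\partial_{x}h)^{2}$ terms) is just the detail the paper leaves implicit, and it checks out.
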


\begin{proof}

\noindent This proof is a small adaptation of part 2.2 in \cite{Castro_Lannes_shallow_water} and Part 4.2 in  \cite{my_coriolis}. We recall the main steps. Using the fact that the velocity is divergence free and Proposition \ref{mean_eq}, we get

\begin{equation*}
\text{w} = - \mu \partial_{x} \left(\int_{-1+\beta b}^{z} u \right).
\end{equation*}

\noindent Furthermore, since $\text{curl}^{\mu, 0} \; \textbf{U}^{\mu} = \mu \bm{\omega}$, we get that

\begin{equation*}
\sqrt{\mu} \bm{\omega}_{y} = \partial_{z} u - \partial_{x} \text{w}.
\end{equation*}

\noindent Then, plugging the ansatz $u = \overline{u} + \sqrt{\mu} u_{1}$ and using the fact that the average of $u_{1}$ is zero, we get

\begin{equation*}
u_{1} = - \left(\int_{z}^{\epsilon \zeta} \bm{\omega}_{y} \right)^{\ast} - \frac{1}{\sqrt{\mu}} \left(\int_{z}^{\epsilon \zeta} \partial_{x} \text{w} \right)^{\ast}
\end{equation*}

\noindent and 

\begin{equation}\label{exact_u}
u = \overline{u} + \sqrt {\mu} u_{\text{sh}}^{\ast} + \mu T^{\ast} u.
\end{equation}

\noindent Then, the expansion for $u$ follows by applying $1+ \mu T^{\ast}$ to the previous equation. Notice that $\underline{T^{\ast} u} = - \overline{T u}$. The computation of $T^{\ast} \overline{u}$ follows from the fact that $\overline{u}$ does not depend on $z$. Finally, the expansion $\underline{\text{w}}$ and $\us$ is the direct consequence for Proposition \ref{mean_eq} and the expansion of $u$.

\end{proof}

\noindent Thanks to the previous proposition, we can also get an expansion of $\partial_{t} u$ and $\partial_{t} \text{w}$.

\begin{prop}\label{equation_partial_t_u}
\noindent If \upshape $\left(\zeta, \Umuszero, \bm{\omega} \right)$ \itshape satisfy the Castro-Lannes system \eqref{castro_lannes_formulation}, we have

\upshape
\begin{equation}
\begin{aligned}
&\partial_{t} \left(u - \overline{u} - \sqrt{\mu} u_{\text{sh}}^{\ast} - \mu T^{\ast} \overline{u} - \mu^{\frac{3}{2}} T^{\ast} u_{\text{sh}}^{\ast} \right) = \mu^{2} R,\\
&\partial_{t} \left(\underline{u} - \overline{u} + \sqrt{\mu} \text{Q}_{x} - \mu \underline{T^{\ast} \overline{u}} + \mu^{\frac{3}{2}} \overline{T u_{\text{sh}}^{\ast}} \right) = \mu^{2} R,\\
&\partial_{t} \left( \underline{\text{w}} + \mu h \partial_{x} \overline{u} + \mu^{\frac{3}{2}} \partial_{x} h \text{Q}_{x} \right) = \max(\mu^{2}, \beta \mu) R.
\end{aligned}
\end{equation}
\itshape

\end{prop}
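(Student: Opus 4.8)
The plan is to avoid re-deriving and then differentiating the asymptotic expansions term by term, and instead to differentiate the \emph{exact} identities that underlie Proposition \ref{equation_u}. For the first line I start from the exact relation \eqref{exact_u}, $u = \overline{u} + \sqrt{\mu}\, u_{\text{sh}}^{\ast} + \mu T^{\ast} u$, and substitute it once into its own right-hand side. Since $T^{\ast}$ is linear and $\overline{u}$, $u_{\text{sh}}^{\ast}$ do not involve the remaining $u$, this yields the exact, remainder-free identity
\begin{equation*}
u - \overline{u} - \sqrt{\mu}\, u_{\text{sh}}^{\ast} - \mu T^{\ast}\overline{u} - \mu^{\frac{3}{2}} T^{\ast} u_{\text{sh}}^{\ast} = \mu^{2} (T^{\ast})^{2} u.
\end{equation*}
Applying $\partial_{t}$ to this equality gives the first claim at once, \emph{provided} $\partial_{t}\big((T^{\ast})^{2}u\big)$ is bounded uniformly in $\mu$, i.e. is a remainder $R$.

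That uniform bound is the heart of the matter, and it is exactly what Remark \ref{control_remainder} supplies. The operator $T^{\ast} = T^{\ast}[\epsilon\zeta,\beta b]$ involves only the horizontal derivative $\partial_{x}$ and integrations in $z$ between the fixed bottom $-1+\beta b$ and the surface $\epsilon\zeta$; differentiating $(T^{\ast})^{2}u$ in time therefore distributes $\partial_{t}$ onto the $\zeta$-dependent integration limit (producing factors $\epsilon\,\partial_{t}\zeta$ together with traces of the integrand) and onto $u$ itself. None of these operations generates a negative power of $\mu$. The spatial regularity needed to apply $\partial_{x}$ up to fourth order is furnished by the $H^{N}$-bounds of Theorem \ref{existence_ww} ($N\geq 5$), while the uniform boundedness of $\partial_{t}\zeta$, $\partial_{t}\textbf{U}$ — hence of $\partial_{t}\overline{u}$, $\partial_{t}u_{\text{sh}}^{\ast}$ and $\partial_{t}\text{Q}_{x}$ — is precisely Remark \ref{control_remainder}. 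Thus $\partial_{t}\big((T^{\ast})^{2}u\big) = R$ and the first line follows.

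The second and third lines are handled the same way. For $\underline{u}$ I evaluate the exact identity above at $z=\epsilon\zeta$, using $\underline{u_{\text{sh}}^{\ast}} = -\text{Q}_{x}$ and $\underline{T^{\ast}u}=-\overline{Tu}$, so that the truncation error is again an explicit $\mu^{2}$-expression built from $T$, $T^{\ast}$ applied to $u$ (namely $-\mu^{2}\overline{T T^{\ast} u}$); differentiating in time and invoking Remark \ref{control_remainder} gives $\mu^{2}R$. For $\underline{\text{w}}$ I start from the exact formula $\text{w}=-\mu\partial_{x}\!\int_{-1+\beta b}^{z}u$, insert the expansion of $u$, and evaluate at the surface; here the error carries a $\beta$-dependence through $b$ and through $\partial_{x}h=\epsilon\partial_{x}\zeta-\beta\partial_{x}b$, which is what turns the order into $\max(\mu^{2},\beta\mu)$.

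The main obstacle is purely the bookkeeping of $\partial_{t}$ acting at the moving surface $z=\epsilon\zeta$: differentiating a trace $\underline{g}=g_{|z=\epsilon\zeta}$ produces the extra chain-rule term $\epsilon\,\partial_{t}\zeta\,(\partial_{z}g)_{|z=\epsilon\zeta}$, and one must check that every such term stays bounded uniformly in $\mu$. The only potentially dangerous quantity is $\partial_{z}u$, but the curl relation $\partial_{z}u=\sqrt{\mu}\,\bm{\omega}_{y}+\partial_{x}\text{w}$ together with $\text{w}=\mathcal{O}(\mu)$ shows $\partial_{z}u=\mathcal{O}(\sqrt{\mu})$, and $\partial_{t}\partial_{z}u$ is controlled by the uniform bounds on $\partial_{t}\bm{\omega}$ and $\partial_{t}\text{w}$ from Remark \ref{control_remainder}. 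Once this is checked, each remainder and its time derivative are uniformly bounded and the three estimates follow.
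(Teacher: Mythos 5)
Your proposal is correct and follows essentially the same route as the paper: iterate the exact identity \eqref{exact_u} once to write the truncation errors explicitly as $\mu^{2}(T^{\ast})^{2}u$ (respectively its trace, and the exact formula for $\underline{\text{w}}$ from Proposition \ref{mean_eq}), then apply $\partial_{t}$ and conclude from the uniform bounds of Remark \ref{control_remainder}. Your extra bookkeeping of the chain-rule terms at $z=\epsilon\zeta$ is just a more detailed rendering of the same argument.
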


\begin{proof}
\noindent From Equality \eqref{exact_u} we get that

\begin{equation}
u = \left(1- \mu T^{\ast} \right) \left( \overline{u} + \sqrt {\mu} u_{\text{sh}}^{\ast} \right) + \mu^{2} T^{\ast} T^{\ast} u.
\end{equation}

\noindent Hence the first and the second equations follows from Remark \ref{control_remainder}. For the third equation, we get the result thanks to Proposition \eqref{mean_eq} and Remark \ref{control_remainder}.
\end{proof}

\noindent As \cite{Castro_Lannes_shallow_water} noticed, we can not express $\overline{T u^{\ast}_{\text{sh}}}$ in terms of $\zeta$ and $\overline{\textbf{V}}$. Then, we have to introduce

\begin{equation}\label{u_diese}
\begin{aligned}
\textbf{V}^{\sharp} = (u^{\sharp}, v^{\sharp})^{t} &= - \frac{24}{h^{3}} \int_{-1+\beta b}^{\epsilon \zeta} \int_{z}^{\epsilon \zeta} \int_{-1+\beta b}^{z} \left(\uastsh,\vastsh\right)^{t},\\
&= \frac{12}{h^{3}} \int_{-1+\beta b}^{\epsilon \zeta} (1+z-\beta b)^{2} \left(\uastsh,\vastsh\right)^{t}.
\end{aligned}
\end{equation}

\noindent Notice that the previous equality follows from a double integration by parts. We have the following Lemma.

\begin{lemma}\label{udiese_T_eq}
We have the following equalities

\begin{equation*}
\begin{aligned}
\overline{T u^{\ast}_{\text{sh}}} &= - \left(\epsilon \partial_{x} \zeta \right)^{2} \text{Q}_{x} + \frac{1}{h}  \int_{-1+\beta b}^{\epsilon \zeta} \partial_{x} \int_{z}^{\epsilon \zeta} \partial_{x} \int_{-1+\beta b}^{z} u_{\text{sh}}^{\ast} \\
&= - \left(\partial_{x} h \right)^{2} \text{Q}_{x} - \frac{1}{24 h} \partial_{x}^{2} \left( h^{3} u^{\sharp} \right) + \beta R.
\end{aligned}
\end{equation*}

\end{lemma}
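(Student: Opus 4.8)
The plan is to derive both identities by applying the Leibniz rule for differentiation under the integral sign, keeping careful track of the boundary terms generated by the $x$-dependent limits $\epsilon\zeta(x)$ and $-1+\beta b(x)$. Two elementary observations drive everything. First, $\uastsh$ has zero vertical average by construction, so $\int_{-1+\beta b}^{\epsilon\zeta}\uastsh = 0$ identically in $x$. Second, by \eqref{V_shear} the shear velocity vanishes at the surface, and since the average of $u_{\text{sh}}$ is $\text{Q}_{x}$, we get $\uastsh|_{z=\epsilon\zeta} = -\text{Q}_{x}$. Throughout, every boundary contribution carrying a factor $\beta\partial_x b$ is bounded uniformly in $\mu$ by Remark \ref{control_remainder} and is absorbed into $\beta R$; the same remark ensures that an $x$-derivative of a $\beta R$ term is again of the form $\beta R$.

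For the first identity I would split $\partial_x^2 = \partial_x\partial_x$ in the definition of $T$ and pull the outer $\partial_x$ out of the integral $\int_z^{\epsilon\zeta}$, whose only $x$-dependent limit is the upper one:
\begin{equation*}
T\uastsh = \partial_x\int_z^{\epsilon\zeta}\partial_x\int_{-1+\beta b}^{z'}\uastsh - \epsilon\partial_x\zeta\left(\partial_x\int_{-1+\beta b}^{z'}\uastsh\right)\Big|_{z'=\epsilon\zeta}.
\end{equation*}
Applying the Leibniz rule to the boundary factor and using $\int_{-1+\beta b}^{\epsilon\zeta}\uastsh=0$ together with $\uastsh|_{z=\epsilon\zeta}=-\text{Q}_{x}$ shows that it equals $\epsilon\partial_x\zeta\,\text{Q}_{x}$, so the boundary term is $-(\epsilon\partial_x\zeta)^2\text{Q}_{x}$. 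Taking the vertical average, which leaves the $z$-independent quantity $(\epsilon\partial_x\zeta)^2\text{Q}_{x}$ unchanged, gives the first equality.

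For the second identity I would start from the integral remaining in the first equality and move the two $\partial_x$ operators outside all three nested integrals, reconstructing $\frac{1}{h}\partial_x^2$ acting on the triple integral $\int_{-1+\beta b}^{\epsilon\zeta}\int_z^{\epsilon\zeta}\int_{-1+\beta b}^{z'}\uastsh$. Every commutation with an integral having an $x$-dependent limit produces boundary terms: those at the surface $z=\epsilon\zeta$ vanish, because the relevant inner integral either collapses ($\int_{\epsilon\zeta}^{\epsilon\zeta}=0$) or equals $\int_{-1+\beta b}^{\epsilon\zeta}\uastsh=0$, while those at the bottom $z=-1+\beta b$ all carry a factor $\beta\partial_x b$ and are absorbed into $\beta R$. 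By the definition \eqref{u_diese} the triple integral equals $-\tfrac{1}{24}h^3 u^{\sharp}$, which produces the term $-\frac{1}{24h}\partial_x^2(h^3 u^{\sharp})$. Finally, since $\partial_x h = \epsilon\partial_x\zeta-\beta\partial_x b$, one has $(\epsilon\partial_x\zeta)^2 = (\partial_x h)^2+\beta R$, so $-(\epsilon\partial_x\zeta)^2\text{Q}_{x} = -(\partial_x h)^2\text{Q}_{x}+\beta R$, and the second equality follows.

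The only real difficulty is the bookkeeping: one must track every boundary term produced by the Leibniz rule, correctly separate the surface terms (which vanish thanks to the zero-average and zero-trace properties of $\uastsh$) from the bottom terms (which are uniformly $\beta R$), and justify that differentiating a $\beta R$ remainder stays within $\beta R$ — all of which rest on the uniform bounds provided by Theorem \ref{existence_ww} and Remark \ref{control_remainder}.
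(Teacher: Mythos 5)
Your proof is correct and follows essentially the same route as the paper: the first equality is obtained by exactly the paper's Leibniz commutation of $\partial_{x}$ with $\int_{z}^{\epsilon\zeta}$, using $\overline{\uastsh}=0$ and $\underline{\uastsh}=-\text{Q}_{x}$ to evaluate the surface term, and your treatment of the second equality (pulling both derivatives outside, surface terms vanishing by collapse or zero average, bottom terms absorbed into $\beta R$, the triple integral identified with $-\tfrac{1}{24}h^{3}u^{\sharp}$, and $(\epsilon\partial_{x}\zeta)^{2}=(\partial_{x}h)^{2}+\beta R$) is precisely what the paper compresses into ``the second equality follows from the same arguments.'' No gaps.
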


\begin{proof}
\noindent We have

\begin{equation}
\partial_{x} \int_{z}^{\epsilon \zeta} \partial_{x} \int_{-1+\beta b}^{z} u^{\ast}_{\text{sh}} = \int_{z}^{\epsilon \zeta} \partial_{x}^{2} \int_{-1+\beta b}^{z} u^{\ast}_{\text{sh}} + \epsilon \partial_{x} \zeta \underline{\partial_{x} \int_{-1+\beta b}^{z} u_{\text{sh}}^{\ast}}
\end{equation}

\noindent and the first equality follows from the fact that the average of $u^{\ast}_{\text{sh}}$ is zero and that $\underline{\uastsh} = -\text{Q}_{x}$. The second equality follows from the same arguments.
\end{proof}

\noindent In the following section, we give equations for $\text{Q}_{x}$, $\text{Q}_{y}$ $\textbf{V}^{\sharp}$ since we can not express these quantities with respect to $\zeta$ and $\overline{\textbf{V}}$. These equations are essential to derive the Boussinesq-Coriolis equations.

\subsection{Equations for $\text{Q}_{x}$, $\text{Q}_{y}$ and $\textbf{V}^{\sharp}$}

\noindent In this part we give the equations satisfied by $\text{Q}_{x}$ and $\text{Q}_{y}$ at order $\mathcal{O} \left( \mu^{\frac{3}{2}} \right)$. The computations are similar to Part 5.4.1 in \cite{Castro_Lannes_shallow_water}. We start by $\text{Q}_{x}$.

\begin{prop}\label{eq_Qx}
If \upshape $\left(\zeta, \Umuszero, \bm{\omega} \right)$ \itshape satisfy the Castro-Lannes system \eqref{castro_lannes_formulation}, then, in the Boussinesq regime \eqref{bouss_regime}, $\text{Q}_{x}$ satisfies the following equation

\upshape
\begin{equation*}
\partial_{t} Q_{x} + \epsilon \overline{u} \partial_{x} Q_{x} + \epsilon Q_{x} \partial_{x} \overline{u} + \frac{\epsilon}{\text{Ro} \sqrt{\mu}} \left(\underline{v} - \overline{v} \right) = \mu^{\frac{3}{2}} R,
\end{equation*}
\itshape

\noindent and $\uastsh$ satisfies the equation

\upshape
\begin{equation*}
\partial_{t} \uastsh + \epsilon \overline{u} \partial_{x} \uastsh + \epsilon \uastsh \partial_{x} \overline{u} + \frac{\epsilon}{\text{Ro} \sqrt{\mu}} \left(\overline{v} - v \right) = \mu^{\frac{3}{2}} R.
\end{equation*}
\itshape
\end{prop}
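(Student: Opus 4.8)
The engine of the proof is the third (vorticity) equation of \eqref{castro_lannes_formulation}, written for its $y$-component, because by the definition \eqref{V_shear} of the shear velocity one has $u_{\text{sh}} = -\int_{z}^{\epsilon\zeta}\bm{\omega}_{y}$, so that $\text{Q}_{x} = \overline{u_{\text{sh}}}$ and $\uastsh = u_{\text{sh}} - \text{Q}_{x}$ are built solely from $\bm{\omega}_{y}$. Working with the vorticity rather than the horizontal momentum equation is what makes this tractable: the momentum equation carries a pressure gradient, whereas the vorticity equation is pressure-free. Two elementary identities will recur: the relation $v_{\text{sh}} = \frac{1}{\sqrt{\mu}}(v - \underline{v})$, which follows from $\sqrt{\mu}\bm{\omega}_{x} = -\partial_{z} v$ (see the proof of Proposition \ref{equation_v}) together with the vanishing of $v_{\text{sh}}$ at $z = \epsilon\zeta$; and $\partial_{t} h = -\epsilon\partial_{x}(h\overline{u})$, a direct consequence of Proposition \ref{mean_eq} and the first equation of \eqref{castro_lannes_formulation}.

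First I would write out the $y$-component of the vorticity equation. The vortex-stretching term $\frac{\epsilon}{\mu}\big[(\bm{\omega}\cdot\nabla^{\mu,0})\textbf{U}^{\mu}\big]_{y}$ vanishes identically once the curl relations $\bm{\omega}_{z} = \partial_{x} v$ and $\sqrt{\mu}\bm{\omega}_{x} = -\partial_{z} v$ are inserted, and the Coriolis source reduces to $\frac{\epsilon}{\mu\text{Ro}}\partial_{z}(\textbf{U}^{\mu})_{y} = -\frac{\epsilon}{\text{Ro}}\bm{\omega}_{x}$, leaving $\partial_{t}\bm{\omega}_{y} + \epsilon u\partial_{x}\bm{\omega}_{y} + \frac{\epsilon}{\mu}\text{w}\partial_{z}\bm{\omega}_{y} = -\frac{\epsilon}{\text{Ro}}\bm{\omega}_{x}$. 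I would then apply $-\int_{z}^{\epsilon\zeta}$, using the Leibniz rule for the moving endpoint $\epsilon\zeta$; the integral of the Coriolis term produces exactly $\frac{\epsilon}{\text{Ro}}v_{\text{sh}} = \frac{\epsilon}{\text{Ro}\sqrt{\mu}}(v - \underline{v})$.

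The heart of the matter is the two convective terms. The horizontal one, $\int_{z}^{\epsilon\zeta}\epsilon u\partial_{x}\bm{\omega}_{y}$, yields $\epsilon\overline{u}\partial_{x}u_{\text{sh}}$ after replacing $u$ by $\overline{u}$ (the error being $O(\sqrt{\mu})$ by Proposition \ref{equation_u}, hence $O(\mu^{3/2})$ in the regime \eqref{bouss_regime}). The vertical one, $\int_{z}^{\epsilon\zeta}\frac{\epsilon}{\mu}\text{w}\partial_{z}\bm{\omega}_{y}$, must \emph{not} be discarded: although it carries an apparent factor $\frac{1}{\mu}$, the identity $\text{w} = -\mu\partial_{x}\int_{-1+\beta b}^{z}u$ makes it genuinely $O(\mu)$, and an integration by parts in $z$ using the incompressibility relation $\partial_{z}\text{w} = -\mu\partial_{x} u$ converts it precisely into the second advection piece $\epsilon u_{\text{sh}}\partial_{x}\overline{u}$. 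The boundary contributions generated by the Leibniz rule and by this integration by parts cancel up to $O(\mu^{2})$ once the kinematic boundary condition and $\partial_{t} h = -\epsilon\partial_{x}(h\overline{u})$ are used.

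Finally I would split the resulting balance into its mean and fluctuating parts. Averaging over the water column (using that $u_{\text{sh}}$ vanishes at $z=\epsilon\zeta$ and, once more, $\partial_{t} h = -\epsilon\partial_{x}(h\overline{u})$ to reach conservative form) gives the equation for $\text{Q}_{x}$, the averaged vertical-advection contribution supplying exactly the $\epsilon\text{Q}_{x}\partial_{x}\overline{u}$ term; subtracting it off gives the equation for $\uastsh$. The Coriolis term splits accordingly: $\overline{\underline{v} - v} = \underline{v} - \overline{v}$ gives $\frac{\epsilon}{\text{Ro}\sqrt{\mu}}(\underline{v} - \overline{v})$ for $\text{Q}_{x}$, and the remaining $\overline{v} - v$ gives $\frac{\epsilon}{\text{Ro}\sqrt{\mu}}(\overline{v} - v)$ for $\uastsh$. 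The main obstacle is purely one of uniform bookkeeping in $\mu$: one must keep every $O(\mu)$ term — in particular the vertical transport term, which contributes at leading order and is not a remainder — and check that all free-surface boundary terms, together with the commutators between the vertical average and $\partial_{t}, \partial_{x}$, are genuinely $O(\mu^{3/2})$, which rests on the regime $\epsilon, \beta = O(\mu)$ and the uniform bounds of Remark \ref{control_remainder}.
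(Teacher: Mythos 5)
You follow the same route as the paper (the $y$-component of the vorticity equation, cancellation of the stretching term through $\bm{\omega}_{z}=\partial_{x}v$ and $\sqrt{\mu}\,\bm{\omega}_{x}=-\partial_{z}v$, integration from $z$ to $\epsilon\zeta$, then vertical averaging), but there is a genuine gap in your two central claims, and your $\text{Q}_{x}$ equation comes out right only because the two errors compensate. When you integrate $-\int_{z}^{\epsilon\zeta}\frac{\epsilon}{\mu}\text{w}\,\partial_{z'}\bm{\omega}_{y}$ by parts, besides the surface term (which does cancel the Leibniz terms produced by $\partial_{t}$ and the horizontal advection, via the kinematic condition) and the bulk piece $\epsilon u_{\text{sh}}\partial_{x}\overline{u}$, you are left with the evaluation at the \emph{running lower limit},
\begin{equation*}
\frac{\epsilon}{\mu}\,\text{w}(z)\,\bm{\omega}_{y}(z)\;=\;-\,\epsilon\,\partial_{x}\!\left[(1+z-\beta b)\,\overline{u}\right]\partial_{z}u_{\text{sh}}\;+\;\mu^{\frac{3}{2}}R.
\end{equation*}
This is not a boundary contribution that cancels: since $\partial_{z}u_{\text{sh}}=\bm{\omega}_{y}=O(1)$ it is of size $O(\epsilon)=O(\mu)$, and the paper keeps it explicitly on the right-hand side of its intermediate equation for $u_{\text{sh}}$ (it is the term that reappears in Proposition \ref{eq_Qx_improve} as $\epsilon\,\partial_{x}\bigl(\int_{-1+\beta b}^{z}[\overline{u}+\sqrt{\mu}\uastsh]\bigr)\partial_{z}\uastsh$, and it is needed downstream, e.g. it generates the $\epsilon u^{\sharp}\partial_{x}\overline{u}$ term and the disappearance of $\text{Q}_{x}$ in Proposition \ref{udiese_eq}). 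Symmetrically, your assertion that the commutators between the vertical average and $\partial_{t},\partial_{x}$ are $O(\mu^{\frac{3}{2}})$ is false: since $\underline{u_{\text{sh}}}=0$, averaging $\partial_{t}u_{\text{sh}}+\epsilon\overline{u}\partial_{x}u_{\text{sh}}$ produces the extra term $\frac{\text{Q}_{x}}{h}\bigl(\partial_{t}h+\epsilon\overline{u}\partial_{x}h\bigr)=-\epsilon\text{Q}_{x}\partial_{x}\overline{u}+O(\mu^{2})$, again of order $\mu$.

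In the paper's bookkeeping both $O(\mu)$ quantities are retained and cancel only at the averaged level: averaging the lower-limit term and integrating by parts in $z$ (using $\underline{u_{\text{sh}}}=0$ and the vanishing of $1+z-\beta b$ at the bottom) gives exactly $-\epsilon\text{Q}_{x}\partial_{x}\overline{u}+O(\mu^{2})$, which offsets the commutator and yields the stated equation for $\text{Q}_{x}$. Your derivation reaches the same identity by discarding both terms, i.e. by two compensating mistakes; and your pointwise equation for $u_{\text{sh}}$ (hence the one for $\uastsh$ obtained by subtraction) silently drops a term of order $\mu$ — the statement of the proposition also suppresses it, but the paper's proof keeps it before averaging, and it cannot be absorbed in $\mu^{\frac{3}{2}}R$. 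To repair the argument, keep $\epsilon\,\partial_{x}[(1+z-\beta b)\overline{u}]\,\partial_{z}u_{\text{sh}}$ on the right-hand side of the $u_{\text{sh}}$ equation, carry it through the average, and perform the cancellation against the commutator there.
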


\begin{proof}
\noindent Using the second equation of the vorticity equation of the Castro-Lannes system \eqref{castro_lannes_formulation}, we have

\begin{equation*}
\partial_{t} \bm{\omega}_{y} + \epsilon u \partial_{x} \bm{\omega}_{y} + \frac{\epsilon}{\mu} \text{w} \partial_{z}  \bm{\omega}_{y} = \epsilon \bm{\omega}_{x} \partial_{x} v + \frac{\epsilon}{\sqrt{\mu}} \bm{\omega}_{z} \partial_{z} v + \frac{\epsilon}{\text{Ro} \sqrt{\mu}} \partial_{z} v.
\end{equation*}

\noindent Since $\bm{\omega}_{x} = -\frac{1}{\sqrt{\mu}} \partial_{z} v$ and $\bm{\omega}_{z} = \partial_{x} v$ we notice that $\epsilon \bm{\omega}_{x} \partial_{x} v + \frac{\epsilon}{\sqrt{\mu}} \bm{\omega}_{z} \partial_{z} v = 0$. Using Proposition \ref{equation_u} we get

\begin{equation*}
\partial_{t} \bm{\omega}_{y} + \epsilon \overline{u} \partial_{x} \bm{\omega}_{y} - \epsilon \partial_{x} \left[ \left(1 \hspace{-0.05cm} + \hspace{-0.05cm} z \hspace{-0.05cm} - \hspace{-0.05cm} \beta b \right) \hspace{-0.05cm} \overline{u} \right] \partial_{z}  \bm{\omega}_{y} - \frac{\epsilon}{\text{Ro} \sqrt{\mu}} \partial_{z} v =  \mu^{\frac{3}{2}} R,
\end{equation*}

\noindent Then, integrating with respect to $z$, using the fact that $\partial_{t} \zeta + \partial_{x} \left( h \overline{u} \right) = 0$ and $u_{\text{sh}} = - \int_{z}^{\epsilon \zeta} \bm{\omega}_{y}$, we get

\begin{equation*}
\partial_{t} u_{\text{sh}} + \epsilon \overline{u} \partial_{x} u_{\text{sh}} + \epsilon u_{\text{sh}} \partial_{x} \overline{u} + \frac{\epsilon}{\text{Ro} \sqrt{\mu}} \left(\underline{v} - v \right) = \epsilon \partial_{x} \left[ \left(1+z - \beta b \right) \overline{u} \right] \partial_{z}  u_{\text{sh}} + \mu^{\frac{3}{2}} R.
\end{equation*}

\noindent Integrating again with respect to $z$, using the fact that $\partial_{t} \zeta + \partial_{x} \left( h \overline{u} \right) = 0$ and $Q_{x} = \overline{u_{\text{sh}}^{\ast}}$, we obtain

\begin{equation*}
\partial_{t} Q_{x} + \epsilon \overline{u} \partial_{x} Q_{x} + \epsilon Q_{x} \partial_{x} \overline{u} + \frac{\epsilon}{\text{Ro} \sqrt{\mu}} \left(\underline{v} - \overline{v} \right) = \hspace{-0.05cm} \hspace{-0.05cm} \mu^{\frac{3}{2}} R.
\end{equation*}
\end{proof}

\noindent We have a similar equation for $\text{Q}_{y}$.

\begin{prop}\label{eq_Qy}
If \upshape $\left(\zeta, \Umuszero, \bm{\omega} \right)$ \itshape satisfy the Castro-Lannes system \eqref{castro_lannes_formulation}, then, in the Boussinesq regime \eqref{bouss_regime}, $\text{Q}_{x}$ satisfies the following equation

\upshape
\begin{equation*}
\partial_{t} \text{Q}_{y}  + \epsilon \overline{u} \partial_{x} \text{Q}_{y} \hspace{-0.1cm} + \epsilon \text{Q}_{x} \partial_{x} \overline{v} + \frac{\epsilon}{\text{Ro} \sqrt{\mu}} \left( \overline{u}  -  \underline{u} \right) = \mu^{\frac{3}{2}} R
\end{equation*}
\itshape

\noindent and  $\vastsh$ satisfies the equation

\upshape
\begin{equation*}
\partial_{t} \vastsh + \epsilon \overline{u} \partial_{x} \vastsh + \epsilon \uastsh \partial_{x} \overline{v} + \frac{\epsilon}{\text{Ro} \sqrt{\mu}} \left(u - \overline{u} \right) = \mu^{\frac{3}{2}} R.
\end{equation*}
\itshape

\end{prop}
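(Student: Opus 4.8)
\noindent The plan is to run the argument of Proposition \ref{eq_Qx} with the roles of $u$ and $v$ interchanged: since $\textbf{V}_{\text{sh}}$ is defined in \eqref{V_shear} so that $v_{\text{sh}} = \int_z^{\epsilon\zeta}\bm{\omega}_x$ and $\text{Q}_y = \overline{v_{\text{sh}}}$, I start from the \emph{first} component of the vorticity equation of \eqref{castro_lannes_formulation} with $\gamma = 0$,
\begin{equation*}
\partial_t\bm{\omega}_x + \epsilon u\partial_x\bm{\omega}_x + \frac{\epsilon}{\mu}\text{w}\,\partial_z\bm{\omega}_x = \epsilon\bm{\omega}_x\partial_x u + \frac{\epsilon}{\sqrt{\mu}}\bm{\omega}_z\partial_z u + \frac{\epsilon}{\text{Ro}\sqrt{\mu}}\partial_z u,
\end{equation*}
and I reduce each term modulo $\mu^{3/2}R$ using the uniform bounds of Remark \ref{control_remainder} and the Boussinesq scaling $\epsilon,\beta = \mathcal{O}(\mu)$.

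\noindent The convection term is handled by replacing $u$ with $\overline{u} + \mathcal{O}(\sqrt{\mu})$, so that $\epsilon u\partial_x\bm{\omega}_x = \epsilon\overline{u}\partial_x\bm{\omega}_x + \mu^{3/2}R$; the transport term is treated through $\text{w} = -\mu\partial_x\int_{-1+\beta b}^z u$ and the expansion of $u$ from Proposition \ref{equation_u}, giving $\frac{\epsilon}{\mu}\text{w}\,\partial_z\bm{\omega}_x = -\epsilon\partial_x[(1+z-\beta b)\overline{u}]\partial_z\bm{\omega}_x + \mu^{3/2}R$, exactly as in Proposition \ref{eq_Qx}. The Coriolis term $\frac{\epsilon}{\text{Ro}\sqrt{\mu}}\partial_z u$ must be kept intact, because $\frac{\epsilon}{\text{Ro}\sqrt{\mu}}$ need not be small.

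\noindent The genuinely new point, and the main obstacle, is the vortex-stretching term $\epsilon\bm{\omega}_x\partial_x u + \frac{\epsilon}{\sqrt{\mu}}\bm{\omega}_z\partial_z u$: in contrast with the $y$-component used for Proposition \ref{eq_Qx}, it does \emph{not} vanish. I would use the curl identities coming from $\text{curl}^{\mu,0}\,\textbf{U}^{\mu} = \mu\bm{\omega}$, namely $\bm{\omega}_z = \partial_x v$ and $\partial_z u = \sqrt{\mu}\bm{\omega}_y + \partial_x\text{w} = \sqrt{\mu}\bm{\omega}_y + \mathcal{O}(\mu)$, together with $\partial_x u = \partial_x\overline{u} + \mathcal{O}(\sqrt{\mu})$ and $\partial_x v = \partial_x\overline{v} + \mathcal{O}(\sqrt{\mu})$, to obtain
\begin{equation*}
\epsilon\bm{\omega}_x\partial_x u + \frac{\epsilon}{\sqrt{\mu}}\bm{\omega}_z\partial_z u = \epsilon\bm{\omega}_x\partial_x\overline{u} + \epsilon\bm{\omega}_y\partial_x\overline{v} + \mu^{3/2}R.
\end{equation*}
The essential gain is the extra $\sqrt{\mu}$ carried by $\partial_z u$, which turns the dangerous prefactor $\frac{\epsilon}{\sqrt{\mu}}$ into $\epsilon = \mathcal{O}(\mu)$ and so absorbs the error; this is precisely where the Boussinesq scaling is needed. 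The cross-term $\epsilon\bm{\omega}_y\partial_x\overline{v}$ has no counterpart in Proposition \ref{eq_Qx} and is the source of the $\epsilon\uastsh\partial_x\overline{v}$ and $\epsilon\text{Q}_x\partial_x\overline{v}$ contributions.

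\noindent Finally I integrate in $z$ from $z$ to $\epsilon\zeta$, using $v_{\text{sh}} = \int_z^{\epsilon\zeta}\bm{\omega}_x$, the identity $\int_z^{\epsilon\zeta}\bm{\omega}_y = -u_{\text{sh}}$ from \eqref{V_shear}, the Leibniz rule, one integration by parts in the transport term, and the mass equation $\partial_t\zeta + \partial_x(h\overline{u}) = 0$ to discard boundary contributions as $\mathcal{O}(\mu^2)$. The self-interaction $\epsilon v_{\text{sh}}\partial_x\overline{u}$ generated by the $\bm{\omega}_x\partial_x\overline{u}$ part of the stretching cancels the one generated by the transport term, while the $\bm{\omega}_y\partial_x\overline{v}$ part produces $\epsilon u_{\text{sh}}\partial_x\overline{v}$ via $\int_z^{\epsilon\zeta}\bm{\omega}_y = -u_{\text{sh}}$. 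This yields the analog of the intermediate $u_{\text{sh}}$-equation,
\begin{equation*}
\partial_t v_{\text{sh}} + \epsilon\overline{u}\partial_x v_{\text{sh}} + \epsilon u_{\text{sh}}\partial_x\overline{v} + \frac{\epsilon}{\text{Ro}\sqrt{\mu}}(u - \underline{u}) = \epsilon\partial_x[(1+z-\beta b)\overline{u}]\partial_z v_{\text{sh}} + \mu^{3/2}R.
\end{equation*}
Subtracting its vertical average gives the $\vastsh$ equation, and averaging it gives the $\text{Q}_y$ equation exactly as for $\text{Q}_x$: here the would-be self-term $\epsilon\text{Q}_y\partial_x\overline{u}$ arising from averaging $\partial_t v_{\text{sh}}$ (through the time dependence of the averaging domain, via $\partial_t h = \epsilon\partial_t\zeta$ and the mass equation) is precisely cancelled by the averaged leftover $\epsilon\partial_x[(1+z-\beta b)\overline{u}]\partial_z v_{\text{sh}}$, leaving only the cross-term $\epsilon\text{Q}_x\partial_x\overline{v}$ together with the stated Coriolis contribution $\frac{\epsilon}{\text{Ro}\sqrt{\mu}}(\overline{u}-\underline{u})$.
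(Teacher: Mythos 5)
Your proof is correct, but it follows a genuinely different route from the paper's own proof of this proposition. You transplant the argument of Proposition \ref{eq_Qx} (early substitution of the expansions of $u$ and $\text{w}$ from Proposition \ref{equation_u}, then two integrations in $z$), and your key new ingredient is the curl identity $\partial_z u = \sqrt{\mu}\,\bm{\omega}_y + \partial_x \text{w}$ together with $\partial_x\text{w} = \mathcal{O}(\mu)$, which converts the dangerous prefactor $\frac{\epsilon}{\sqrt{\mu}}$ in the stretching term into $\epsilon\bm{\omega}_y\partial_x\overline{v} + \mathcal{O}(\mu^{3/2})$; the cancellation you observe between the $\epsilon v_{\text{sh}}\partial_x\overline{u}$ contributions of the stretching and of the transport term, and the bookkeeping of the averaging step, check out and reproduce the stated $\text{Q}_y$ equation. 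The paper instead never expands the stretching term: it uses $\nabla^{\mu,0}\cdot\bm{\omega}=0$ and $\text{div}^{\mu,0}\textbf{U}^{\mu}=0$ to rewrite the equation in the conservative form $\partial_t\bm{\omega}_x - \frac{\epsilon}{\sqrt{\mu}}\partial_z(u\bm{\omega}_z) + \frac{\epsilon}{\mu}\partial_z(\text{w}\bm{\omega}_x) = \frac{\epsilon}{\text{Ro}\sqrt{\mu}}\partial_z u$, integrates twice exactly (using the kinematic and bottom conditions), obtaining an exact identity involving $\underline{u}\partial_x\underline{v}$, $\frac{1}{h}\partial_x\int uv$ and $\partial_t h\,\overline{v}$, and only at the very end inserts Propositions \ref{mean_eq}, \ref{equation_v}, \ref{equation_u} to identify the $\mu^{3/2}$ remainder. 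The trade-off: your route is more elementary and parallel to Proposition \ref{eq_Qx}, but it invokes the Boussinesq smallness $\epsilon=\mathcal{O}(\mu)$ from the start and needs uniform bounds on $\partial_z\bm{\omega}_x$ (of the same nature as those already used for Proposition \ref{eq_Qx}, so this is acceptable); the paper's exact conservative-form computation is regime-independent up to the last step, which is precisely what allows it to be refined later into Proposition \ref{eq_Qy_improve} in the Green--Naghdi regime where $\epsilon$ is not small. Note also that, as in the paper, your $\vastsh$ equation really carries the leftover $\epsilon\partial_x\left[(1+z-\beta b)\overline{u}\right]\partial_z \vastsh$, which is only $\mathcal{O}(\mu)$; keeping it explicit in your intermediate equation (as you do, and as the paper does for $\uastsh$) is the honest formulation, and it is harmless for the subsequent uses since that term is either averaged away or of higher order where these equations are invoked.
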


\begin{proof}
\noindent Using the first equation of the vorticity equation of the Castro-Lannes system \eqref{castro_lannes_formulation}, we have

\begin{equation*}
\partial_{t} \bm{\omega}_{x} + \epsilon u \partial_{x} \bm{\omega}_{x} + \frac{\epsilon}{\mu} \text{w} \partial_{z}  \bm{\omega}_{x} = \epsilon \bm{\omega}_{x} \partial_{x} u + \frac{\epsilon}{\sqrt{\mu}} \bm{\omega}_{z} \partial_{z} u + \frac{\epsilon}{\text{Ro} \sqrt{\mu}} \partial_{z} u.
\end{equation*}

\noindent Then, using the fact that $\nabla^{\mu,0} \cdot \bm{\omega} = 0$ and $\nabla^{\mu,0} \cdot \textbf{U}^{\mu,\gamma}  = 0$, we get

\begin{equation*}
\partial_{t} \bm{\omega}_{x} - \frac{\epsilon}{\sqrt{\mu}} \partial_{z} \left( u \bm{\omega}_{z} \right) + \frac{\epsilon}{\mu} \partial_{z} \left( \text{w} \bm{\omega}_{x} \right) = \frac{\epsilon}{\text{Ro} \sqrt{\mu}} \partial_{z} u.
\end{equation*}

\noindent then, we integrate with respect to $z$ and, using the fact that $\partial_{t} \zeta - \frac{1}{\mu} \underline{\textbf{U}}^{\mu} \cdot \textbf{N}^{\mu,0} = 0$, $\bm{\omega}_{x} = - \frac{1}{\sqrt{\mu}} \partial_{z} v$ and $\bm{\omega}_{z} = \partial_{x} v$, we obtain

\begin{equation*}
\partial_{t} \left( \int_{-1+\beta b}^{\epsilon \zeta} \bm{\omega}_{x} \right) - \frac{\epsilon}{\sqrt{\mu}} \underline{u} \partial_{x} \underline{v} + \frac{\epsilon}{\sqrt{\mu}} u \partial_{x} v + \frac{\epsilon}{\mu^{\frac{3}{2}}} \text{w} \partial_{z} v + \frac{\epsilon}{\text{Ro} \sqrt{\mu}} \left(u - \underline{u} \right) = 0.
\end{equation*}

\noindent Then, we integrate again with respect to $z$ and, using Proposition \ref{equation_v} and the fact that $\partial_{t} \zeta - \frac{1}{\mu} \underline{\textbf{U}}^{\mu} \cdot \textbf{N}^{\mu,0} = 0$, $\textbf{U}_{b}^{\mu} \cdot \textbf{N}_{b}^{\mu,0} = 0$, and $\nabla^{\mu,0} \cdot \textbf{U}^{\mu} = 0$, we get

\begin{equation*}
\partial_{t} \text{Q}_{y} - \frac{\epsilon}{\sqrt{\mu}} \underline{u} \partial_{x} \underline{v} + \frac{\epsilon}{\sqrt{\mu}} \frac{1}{h} \partial_{x} \left( \int_{-1+\beta b}^{\epsilon \zeta} u v \right) + \frac{1}{\sqrt{\mu} h} \partial_{t} h \overline{v} + \frac{\epsilon}{\text{Ro} \sqrt{\mu}} \left(\overline{u} - \underline{u} \right) = 0.
\end{equation*}

\noindent Then, thanks to Propositions \ref{mean_eq}, \ref{equation_v} and \ref{equation_u}  we finally obtain that

\begin{equation*}
\partial_{t} \text{Q}_{y} \hspace{-0.1cm} + \hspace{-0.1cm} \epsilon \overline{u} \partial_{x} \text{Q}_{y} \hspace{-0.1cm} + \hspace{-0.1cm} \epsilon \text{Q}_{x} \partial_{x} \overline{v} \hspace{-0.1cm} + \hspace{-0.1cm} \frac{\epsilon}{\text{Ro} \sqrt{\mu}} \hspace{-0.1cm} \left( \overline{u} \hspace{-0.1cm} - \hspace{-0.1cm} \underline{u} \right) \hspace{-0.1cm} = \mu^{\frac{3}{2}} R.
\end{equation*}
\end{proof}

\noindent  Notice that we give in Subsection \ref{improve_Q} a generalization of the two previous propositions to the fully nonlinear Green-Naghdi regime. Furthermore, in the following proposition we give an equation for $\textbf{V}^{\sharp}$ up to terms of order $\mathcal{O} \left(\sqrt{\mu} \right)$.

\begin{prop}\label{udiese_eq}
If \upshape $\left(\zeta, \Umuszero, \bm{\omega} \right)$ \itshape satisfy the Castro-Lannes system \eqref{castro_lannes_formulation}, then \upshape $\textbf{V}^{\sharp}$ \itshape satisfies the following equation

\upshape
\begin{equation*}
\partial_{t} \textbf{V}^{\sharp} + \epsilon \textbf{V}^{\sharp} \partial_{x} \overline{u} + \epsilon \overline{u} \partial_{x} \textbf{V}^{\sharp} + \frac{\epsilon}{\text{Ro}} \textbf{V}^{\sharp \perp} = \max \left( \epsilon, \frac{\epsilon}{\text{Ro}} \right) \sqrt{\mu} R.
\end{equation*}
\itshape

\end{prop}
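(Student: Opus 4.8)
The plan is to obtain the equation for $\textbf{V}^{\sharp}$ by taking the weighted vertical moment of the shear-velocity equations against $(1+z-\beta b)^{2}$ --- the very weight that produces $\textbf{V}^{\sharp}$ in \eqref{u_diese} --- and then transporting the resulting averaged identity. The essential point is to start \emph{not} from the condensed equations for $\uastsh,\vastsh$ in Propositions \ref{eq_Qx}--\ref{eq_Qy}, but from the intermediate identities for the full shear components $u_{\text{sh}},v_{\text{sh}}$ that appear in their proofs, of the form $\partial_{t}u_{\text{sh}}+\epsilon\overline{u}\partial_{x}u_{\text{sh}}+\epsilon u_{\text{sh}}\partial_{x}\overline{u}+\frac{\epsilon}{\text{Ro}\sqrt{\mu}}(\underline{v}-v)=\epsilon\partial_{x}[(1+z-\beta b)\overline{u}]\partial_{z}u_{\text{sh}}+\mu^{\frac{3}{2}}R$, together with its $v_{\text{sh}}$ analogue. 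Keeping the vertical-transport remnant $\epsilon\partial_{x}[(1+z-\beta b)\overline{u}]\partial_{z}(\cdot)$ is crucial: the condensed $\uastsh,\vastsh$ equations do not retain it, whereas for the second moment it contributes at leading order and fixes the stretching coefficient.

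First I rewrite the Coriolis sources using the exact identity $v=\overline{v}+\sqrt{\mu}\,\vastsh$ of Proposition \ref{equation_v} and the expansion $u=\overline{u}+\sqrt{\mu}\,\uastsh+\mu R$ of Proposition \ref{equation_u}, which convert the $\frac{\epsilon}{\text{Ro}\sqrt{\mu}}$ terms into $\pm\frac{\epsilon}{\text{Ro}}$ times a shear component, up to a $\frac{\epsilon}{\text{Ro}}\sqrt{\mu}\,R$ error. I then multiply each equation by $\frac{12}{h^{3}}(1+z-\beta b)^{2}$ and integrate in $z$ over $(-1+\beta b,\epsilon\zeta)$, so that the moment of $(\uastsh,\vastsh)^{t}$ reproduces $\textbf{V}^{\sharp}$. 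Two facts make this step clean: the weight vanishes at the bottom, and the full shear components $u_{\text{sh}},v_{\text{sh}}$ vanish at the free surface. Consequently \emph{all} boundary terms --- those from the moving limit $z=\epsilon\zeta$ and the one produced when the vertical-transport term is integrated by parts in $z$ --- vanish identically, and the interior part of that integration by parts contributes the factor $-3\int(1+z-\beta b)^{2}(\cdot)$.

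To commute $\partial_{t}+\epsilon\overline{u}\partial_{x}$ with the prefactor $h^{-3}$ and the $x$-dependent limits I use the mass relation $\partial_{t}h+\epsilon\partial_{x}(h\overline{u})=0$, i.e.\ $(\partial_{t}+\epsilon\overline{u}\partial_{x})h=-\epsilon h\partial_{x}\overline{u}$. The arithmetic of the stretching is then transparent: the direct stretching ($-1$) and the integration-by-parts term ($-3$) combine with the $+3$ produced by $(\partial_{t}+\epsilon\overline{u}\partial_{x})h^{-3}$ to leave the single stretching term of coefficient $-1$, alongside the transport $\epsilon\overline{u}\partial_{x}\textbf{V}^{\sharp}$. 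I finally trade the moments of the full shear for $\textbf{V}^{\sharp}$ and $\textbf{Q}$ via the algebraic identity $\frac{12}{h^{3}}\int_{-1+\beta b}^{\epsilon\zeta}(1+z-\beta b)^{2}(u_{\text{sh}},v_{\text{sh}})^{t}=\textbf{V}^{\sharp}+4\textbf{Q}$ and subtract four times the $\textbf{Q}$ equations of Propositions \ref{eq_Qx}--\ref{eq_Qy}; the $\frac{\epsilon}{\text{Ro}}$ contributions then recombine into $\frac{\epsilon}{\text{Ro}}\textbf{V}^{\sharp\perp}$, and every discarded term is $\mathcal{O}(\max(\epsilon,\frac{\epsilon}{\text{Ro}})\sqrt{\mu})$ once one invokes the uniform bounds of Theorem \ref{existence_ww} and Remark \ref{control_remainder} together with $\beta=\mathcal{O}(\mu)$ in the regime \eqref{bouss_regime}.

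The main obstacle is the careful accounting of the $\mathcal{O}(\epsilon)$ contributions, none of which matches the target in isolation: the weight-derivative term $-3\epsilon\partial_{x}\overline{u}\,(\cdot)$, the integration-by-parts term and the stretching must combine exactly, and it is only the $-1-3+3$ cancellation (and the vanishing of all boundary terms, which in turn forces the use of the full shear rather than its fluctuation $\uastsh$) that collapses everything to the stated form. I expect the transverse component to be the most delicate: there the stretching is fed by the cross term $\partial_{x}\overline{v}$ rather than $\partial_{x}\overline{u}$, so checking that --- after recombination with the Coriolis coupling --- it reduces to the stated equation up to an admissible remainder requires the most attention.
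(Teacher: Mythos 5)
Your overall strategy is the right one and, on the longitudinal component, it is essentially a spelled-out version of the paper's own proof (which performs the same $(1+z-\beta b)^{2}$-moment computation, deferring details to Castro--Lannes). Your two structural observations are correct and genuinely needed: the vertical-transport term $\epsilon\partial_{x}\left[(1+z-\beta b)\overline{u}\right]\partial_{z}(\cdot)$, which is not visible in the condensed $\uastsh,\vastsh$ equations of Propositions \ref{eq_Qx}--\ref{eq_Qy} but is retained in Propositions \ref{eq_Qx_improve}--\ref{eq_Qy_improve}, does contribute at order $\epsilon$ to the second moment (taking the condensed equations literally one would end up with $\partial_{t}u^{\sharp}+\epsilon\overline{u}\partial_{x}u^{\sharp}-2\epsilon u^{\sharp}\partial_{x}\overline{u}-12\epsilon \text{Q}_{x}\partial_{x}\overline{u}-\frac{\epsilon}{\text{Ro}}v^{\sharp}$, i.e.\ the wrong stretching coefficient plus a spurious $\text{Q}_{x}$ term), and working with the full shear $u_{\text{sh}},v_{\text{sh}}$ so that all boundary terms vanish, then using $\frac{12}{h^{3}}\int(1+z-\beta b)^{2}\textbf{V}_{\text{sh}}=\textbf{V}^{\sharp}+4\textbf{Q}$ and subtracting four times the $\textbf{Q}$ equations, is a clean and correct way to organize the bookkeeping; your $-1-3+3$ arithmetic for the first component checks out and reproduces $\partial_{t}u^{\sharp}+\epsilon u^{\sharp}\partial_{x}\overline{u}+\epsilon\overline{u}\partial_{x}u^{\sharp}-\frac{\epsilon}{\text{Ro}}v^{\sharp}=\max\left(\epsilon,\frac{\epsilon}{\text{Ro}}\right)\sqrt{\mu}\,R$, which is the first line of the statement.

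The genuine gap is exactly where you defer: the transverse component, and it is not a matter of ``attention'' but of the method producing a different term. The $v_{\text{sh}}$ (or $\vastsh$) equation carries the cross-stretching $\epsilon u_{\text{sh}}\partial_{x}\overline{v}$ and the Coriolis source $\frac{\epsilon}{\text{Ro}\sqrt{\mu}}(u-\overline{u})\simeq\frac{\epsilon}{\text{Ro}}u_{\text{sh}}$ (compare Propositions \ref{eq_Qy} and \ref{eq_Qy_improve}); its weighted moment gives $\epsilon(u^{\sharp}+4\text{Q}_{x})\partial_{x}\overline{v}+\frac{\epsilon}{\text{Ro}}(u^{\sharp}+4\text{Q}_{x})$, and subtracting four times the $\text{Q}_{y}$ equation (whose stretching is $\epsilon\text{Q}_{x}\partial_{x}\overline{v}$ and whose Coriolis term is $\frac{\epsilon}{\text{Ro}}\text{Q}_{x}$) uses up the Coriolis coupling entirely in cancelling the $4\textbf{Q}$ pieces. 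What remains is $\partial_{t}v^{\sharp}+\epsilon\overline{u}\partial_{x}v^{\sharp}+\epsilon u^{\sharp}\partial_{x}\overline{v}+\frac{\epsilon}{\text{Ro}}u^{\sharp}=\max\left(\epsilon,\frac{\epsilon}{\text{Ro}}\right)\sqrt{\mu}\,R$, whereas the statement asserts $\epsilon v^{\sharp}\partial_{x}\overline{u}$ in place of $\epsilon u^{\sharp}\partial_{x}\overline{v}$. No recombination with the Coriolis coupling can bridge this: $\epsilon\left(u^{\sharp}\partial_{x}\overline{v}-v^{\sharp}\partial_{x}\overline{u}\right)$ is generically of size $\epsilon$, not $\epsilon\sqrt{\mu}$, since neither $\overline{v}$ nor $v^{\sharp}$ is small in the regimes considered. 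So your proposal, pushed through, proves a version of the proposition with transverse stretching $\epsilon u^{\sharp}\partial_{x}\overline{v}$ (vectorially $\epsilon u^{\sharp}\partial_{x}\overline{\textbf{V}}$) but does not establish the second component as printed; to claim the statement as written you would need an additional argument that you have not supplied, and the step you flag as ``requiring the most attention'' is precisely where the plan, as described, fails to close.
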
 

\begin{proof}
\noindent The proof is similar to the computation in Part 4.4 in \cite{Castro_Lannes_shallow_water}. After multiplying by $(1+z-\beta b)^{2}$ and integrating with respect to $z$ the second equations of Propositions \ref{eq_Qx} and \ref{eq_Qy}, we neglect all the term of order $\mathcal{O}(\sqrt{\mu})$. Then, using the fact that $\partial_{t} \zeta + \partial_{x}(h \overline{u}) = 0$ and $\textbf{V} - \overline{\textbf{V}} = \sqrt{\mu} \textbf{V}_{\text{sh}}^{\ast} + \mu R$, we get the result.
\end{proof}

\subsection{The Boussinesq-Coriolis equations}

\noindent We can now establish the Boussinesq-Coriolis equations when $d=1$. The Boussinesq-Coriolis equations are the following system

\begin{equation}\label{boussinesq_eq}
\left\{
\begin{aligned}
&\partial_{t} \zeta + \partial_{x} \left( h \overline{u}  \right) = 0,\\
&\left(1- \frac{\mu}{3} \partial_{x}^{2} \right) \partial_{t} \overline{u} + \partial_{x} \zeta + \epsilon \overline{u} \partial_{x} \overline{u} - \frac{\epsilon}{\text{Ro}} \overline{v} + \frac{\epsilon}{\text{Ro}} \mu^{\frac{3}{2}} \frac{1}{24} \partial_{x}^{2} \frac{v^{\sharp}}{h} = 0,\\
&\partial_{t} \overline{v} + \epsilon \overline{u} \partial_{x} \overline{v} + \frac{\epsilon}{\text{Ro}} \overline{u} = 0,\\
&\partial_{t} \textbf{V}^{\sharp} + \epsilon \textbf{V}^{\sharp} \partial_{x} \overline{u} + \epsilon  \overline{u} \partial_{x} \textbf{V}^{\sharp} + \frac{\epsilon}{\text{Ro}} \textbf{V}^{\sharp \perp} = 0,
\end{aligned}
\right.
\end{equation}

\noindent where $\textbf{V}^{\sharp}$ is defined in \eqref{u_diese}. We can show that the Boussinesq-Coriolis equations are an order $\mathcal{O}(\mu^{2})$ approximation of the water waves equations.

\begin{remark}\label{renorm_v_sharp}
\noindent Inspired by \cite{lannes_marche}, we can renormalize \upshape$\textbf{V}^{\sharp}$ \itshape by $h$ and, using the first equation of \eqref{boussinesq_eq}, we get the following equation

\upshape
\begin{equation*}
\partial_{t} \left(\frac{\textbf{V}^{\sharp}}{h} \right) + \epsilon \overline{u} \partial_{x} \left(\frac{\textbf{V}^{\sharp}}{h} \right) + \frac{\epsilon}{\text{Ro}} \left(\frac{\textbf{V}^{\sharp}}{h} \right)^{\perp} = 0.
\end{equation*}
\itshape

\noindent This remark will be useful for the local existence (Proposition \ref{existence_boussi}).
\end{remark}

\begin{prop}\label{constit_boussi}
In the  Boussinesq regime $\mathcal{A}_{Bouss}$ \eqref{bouss_regime}, the Castro-Lannes equations \eqref{castro_lannes_formulation} are consistent at order $\mathcal{O}(\mu^{2})$ with the Boussinesq-Coriolis equations \eqref{boussinesq_eq} in the sense of Definition \ref{consistence}.
\end{prop}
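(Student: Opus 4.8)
The plan is to verify the four equations of \eqref{boussinesq_eq} one at a time, substituting the exact expansions of the velocity components from the preceding propositions and collecting every term of order $\mathcal{O}(\mu^{3/2})$ or larger, since in the Boussinesq regime \eqref{bouss_regime} we have $\epsilon,\beta = \mathcal{O}(\mu)$ while $\frac{\epsilon}{\text{Ro}}$ may stay of order one. The first equation is exact: combining the first equation of \eqref{castro_lannes_formulation} with Proposition \ref{mean_eq} at $\gamma = 0$ yields $\partial_{t}\zeta + \partial_{x}(h\overline{u}) = 0$ with no residual. The third equation follows from the evolution equation for $\underline{v}$ in Proposition \ref{equation_v} by replacing $\underline{v}$ and $\underline{u}$ by their averages through $\underline{v} = \overline{v} - \sqrt{\mu}\,\text{Q}_{y}$ and the expansion of $\underline{u}$ in Proposition \ref{equation_u}, then absorbing the $\mathcal{O}(\sqrt{\mu})$ discrepancy via Proposition \ref{eq_Qy}; the subtle point there is that the two $\frac{\epsilon}{\text{Ro}}\mu$ contributions coming from the $\mu$-correction of $\underline{u}$ cancel exactly, leaving an $\mathcal{O}(\mu^{2})$ residual. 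The fourth equation is precisely Proposition \ref{udiese_eq}, whose residual $\max(\epsilon,\frac{\epsilon}{\text{Ro}})\sqrt{\mu}\,R$ is of the right size once $v^{\sharp}$ is weighted by the $\frac{\epsilon}{\text{Ro}}\mu^{3/2}$ factor with which it enters the second equation.

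The heart of the matter is the second equation, which I would obtain from the first component of the second equation of \eqref{castro_lannes_formulation} with $\gamma = 0$,
\[
\partial_{t}\us + \partial_{x}\zeta + \frac{\epsilon}{2}\partial_{x}\big(\us^{2}+\underline{v}^{2}\big) - \frac{\epsilon}{2\mu}\partial_{x}\big[(1+\epsilon^{2}\mu(\partial_{x}\zeta)^{2})\underline{\text{w}}^{2}\big] - \epsilon\,\underline{\bm{\omega}}\cdot\textbf{N}^{\mu,0}\,\underline{v} - \frac{\epsilon}{\text{Ro}}\underline{v} = 0 .
\]
First I would record three reductions. Using $\underline{\bm{\omega}}\cdot\textbf{N}^{\mu,0} = \partial_{x}\underline{v}$ from Proposition \ref{equation_v}, the vorticity term equals $-\frac{\epsilon}{2}\partial_{x}\underline{v}^{2}$ and cancels the $+\frac{\epsilon}{2}\partial_{x}\underline{v}^{2}$ in the quadratic term, removing the only genuinely $\mathcal{O}(\mu)$ contribution of the transverse velocity. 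The surface term is $\mathcal{O}(\mu^{2})$ because $\underline{\text{w}} = \mathcal{O}(\mu)$ forces $\frac{\epsilon}{2\mu}\partial_{x}\underline{\text{w}}^{2}=\mathcal{O}(\epsilon\mu)$. Finally $\frac{\epsilon}{2}\partial_{x}\us^{2} = \epsilon\overline{u}\partial_{x}\overline{u} - \epsilon\sqrt{\mu}\,\partial_{x}(\overline{u}\,\text{Q}_{x}) + \mathcal{O}(\mu^{2})$ by the expansion of $\us$ in Proposition \ref{equation_u}.

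Next I would rewrite the dispersive operator. Inserting $h = 1+\mathcal{O}(\mu)$ and Lemma \ref{udiese_T_eq} into the expansion of $\us$ gives $\us = \overline{u} - \sqrt{\mu}\,\text{Q}_{x} - \frac{\mu}{3}\partial_{x}^{2}\overline{u} + \frac{\mu^{3/2}}{24}\partial_{x}^{2}u^{\sharp} + \mathcal{O}(\mu^{2})$, so that, after differentiating in time (legitimised by Proposition \ref{equation_partial_t_u} and Remark \ref{control_remainder}),
\[
\Big(1-\tfrac{\mu}{3}\partial_{x}^{2}\Big)\partial_{t}\overline{u} = \partial_{t}\us + \sqrt{\mu}\,\partial_{t}\text{Q}_{x} - \frac{\mu^{3/2}}{24}\partial_{x}^{2}\partial_{t}u^{\sharp} + \mathcal{O}(\mu^{2}).
\]
Substituting this together with the reductions above into the left-hand side of the second equation of \eqref{boussinesq_eq}, and using the displayed first-component identity to eliminate $\partial_{t}\us + \partial_{x}\zeta$, the whole expression collapses to
\[
\sqrt{\mu}\Big(\partial_{t}\text{Q}_{x} + \epsilon\partial_{x}(\overline{u}\,\text{Q}_{x}) - \tfrac{\epsilon}{\text{Ro}}\text{Q}_{y}\Big) - \frac{\mu^{3/2}}{24}\partial_{x}^{2}\partial_{t}u^{\sharp} + \frac{\epsilon}{\text{Ro}}\frac{\mu^{3/2}}{24}\partial_{x}^{2}\frac{v^{\sharp}}{h} + \mathcal{O}(\mu^{2}).
\]
The bracket is $\mathcal{O}(\mu^{3/2})$ by Proposition \ref{eq_Qx} (after using $\underline{v}-\overline{v} = -\sqrt{\mu}\,\text{Q}_{y}$), so the first term is $\mathcal{O}(\mu^{2})$; and replacing $\partial_{t}u^{\sharp}$ by $\frac{\epsilon}{\text{Ro}}v^{\sharp}+\mathcal{O}(\epsilon)$ from Proposition \ref{udiese_eq}, together with $\frac{1}{h}=1+\mathcal{O}(\mu)$, makes the last two terms cancel up to $\mathcal{O}(\mu^{2})$. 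This is exactly where the new vorticity--Coriolis term earns its place.

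The main obstacle is the bookkeeping in the second equation: one must track every $\mathcal{O}(\mu^{3/2})$ contribution — which, because $\frac{\epsilon}{\text{Ro}}$ is not small, includes the Coriolis--shear couplings carried by $\text{Q}_{x}$, $\text{Q}_{y}$ and $\textbf{V}^{\sharp}$ — and check that they organise into precisely the combinations governed by Proposition \ref{eq_Qx} and the $u^{\sharp}$-evolution of Proposition \ref{udiese_eq}, rather than into uncontrolled remainders. The delicate accounting is the observation that $\overline{T\uastsh}$, which by Lemma \ref{udiese_T_eq} cannot be closed in terms of $\zeta$ and $\overline{\textbf{V}}$, is exactly what the auxiliary unknown $\textbf{V}^{\sharp}$ is designed to capture, so that its contribution to $\us$ turns into the closed dispersive--Coriolis term $\frac{\epsilon}{\text{Ro}}\mu^{3/2}\frac{1}{24}\partial_{x}^{2}\frac{v^{\sharp}}{h}$ instead of spoiling the $\mathcal{O}(\mu^{2})$ accuracy.
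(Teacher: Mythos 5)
Your proposal is correct and follows essentially the same route as the paper's (much terser) proof: first equation exact via Proposition \ref{mean_eq}, second equation from the $\us$-component of \eqref{castro_lannes_formulation} using Propositions \ref{equation_u}, \ref{equation_partial_t_u}, Lemma \ref{udiese_T_eq}, the disappearance of the $\text{Q}_{x}$ terms through Proposition \ref{eq_Qx}, and the conversion of $\partial_t u^{\sharp}$ into the $\frac{\epsilon}{\text{Ro}}\mu^{3/2}\partial_x^2 \frac{v^{\sharp}}{h}$ term via Proposition \ref{udiese_eq} and $h=1+\mathcal{O}(\mu)$, and third equation from Propositions \ref{equation_v}, \ref{equation_u} and \ref{eq_Qy}. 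You merely make explicit some cancellations the paper leaves implicit (the $\underline{v}^{2}$ term against $\epsilon\,\underline{\bm{\omega}}\cdot\textbf{N}^{\mu,0}\underline{v}$, and the $\frac{\epsilon}{\text{Ro}}\mu\,\underline{T^{\ast}\overline{u}}$ contributions in the $\overline{v}$ equation), so no substantive difference.
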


\begin{proof}
\noindent The first equation of the Boussinesq-Coriolis equations is always satisfied for a solution of the Castro-Lannes formulation by Proposition \ref{mean_eq}. For the second equation, we use Proposition \ref{equation_u}, Proposition \ref{eq_Qx} together with Proposition \ref{equation_partial_t_u}, Lemma \ref{udiese_T_eq} and Proposition \ref{udiese_eq} (we recall that $\epsilon = \mathcal{O} (\mu)$). Notice the fact that all the terms with $\text{Q}_{x}$ disappear. We also use the fact that

\begin{equation*}
h^{3} v^{\sharp} = \frac{v^{\sharp}}{h} + \mu R.
\end{equation*}

\noindent Then, the third equation follows from Proposition \ref{equation_u}, \ref{equation_u} and \ref{eq_Qy} (all the terms with $\text{Q}_{y}$ also disappear).
\end{proof}

\noindent We notice that contrary to the classical Boussinesq equations, we have a new term due to the vorticity that we can not neglect in presence of a Coriolis forcing. In our knowledge, this term was not highlighted before in the literature.

\begin{remark}\label{term_vort_strange}
\noindent In the Boussinesq-Coriolis system \eqref{boussinesq_eq} we could simplify the term $\partial_{x}^{2} \frac{v^{\sharp}}{h}$ by $\partial_{x}^{2} v^{\sharp}$ since these terms are equal up to a remainder of order $\mathcal{O}(\mu)$. However, the term $\partial_{x}^{2} \frac{v^{\sharp}}{h}$ will be essential for the local existence \upshape(\itshape see Remark \ref{term_vort_strange_explanation} \upshape)\itshape.
\end{remark}

\begin{remark}\label{weak_rot_boussi}
\noindent If we assume that $\frac{\epsilon}{\text{Ro}} = \mathcal{O} \left( \sqrt{\mu} \right)$, we can neglect the term with $v^{\sharp}$ in the second equation of \eqref{boussinesq_eq} and we obtain

\upshape
\begin{equation}\label{weak_rot_boussi_eq}
\left\{
\begin{aligned}
&\partial_{t} \zeta + \partial_{x} \left( h \overline{u}  \right) = 0,\\
&\left(1- \frac{\mu}{3} \partial_{x}^{2} \right) \partial_{t} \overline{u} + \partial_{x} \zeta + \epsilon \overline{u} \partial_{x} \overline{u} - \frac{\epsilon}{\text{Ro}} \overline{v} = 0,\\
&\partial_{t} \overline{v} + \epsilon \overline{u} \partial_{x} \overline{v} + \frac{\epsilon}{\text{Ro}} \overline{u} = 0.
\end{aligned}
\right.
\end{equation}
\itshape

\noindent This system is the classical Boussinesq equations with a standard Coriolis forcing. It is consistent of order $\mathcal{O}(\mu^{2})$ with the Boussinesq-Coriolis equations \eqref{boussinesq_eq}. We use this system in Subsections \ref{model_ostrov} and \ref{model_kdv}.
\end{remark}

\subsection{Full justification of the Boussinesq-Coriolis equations}

\noindent In this part, we fully justify the Boussinesq-Coriolis equations \eqref{boussinesq_eq}. In the following we denote by $u$ the quantity $\overline{u}$ and by $v$ the quantity $\overline{v}$. We show that the Boussinesq-Coriolis equations are wellposed. We define the energy space

\begin{equation}
X^{s}(\R) =  H^{s}(\R) \times H^{s+1}(\R) \times H^{s}(\R) \times H^{s+1}(\R) \times H^{s+1}(\R),
\end{equation}

\noindent endowed with the norm 

\begin{equation}
\lver (\zeta,u,v,\textbf{W}) \rver_{X^{s}_{\mu}}^{2} =  \lver \zeta \rver^{2}_{H^{s}} +  \lver u \rver^{2}_{H^{s}} +  \mu \lver \partial_{x} u \rver^{2}_{H^{s}} +  \lver v \rver^{2}_{H^{s}} +  \lver \textbf{W} \rver^{2}_{H^{s}} + \mu \lver \partial_{x} \textbf{W} \rver^{2}_{H^{s}}.
\end{equation}

\begin{prop}\label{existence_boussi}
\noindent Let $A > 0$, $s > \frac{1}{2} + 1$, \upshape $ \left( \zeta_{0},u_{0},v_{0},\textbf{V}^{\sharp}_{0} \right) \in X^{s}(\R)$ \itshape and \upshape $b \in H^{s+1}(\R)$\itshape. We suppose that \upshape$\left( \epsilon,\beta,\gamma,\mu,\text{Ro} \right) \in \mathcal{A}_{Bouss}$ \itshape.  We assume that

\upshape
\begin{equation*}
\exists \, h_{\min}  > 0 \text{ ,  } \epsilon \zeta _{0} + 1 - \beta b \geq h_{\min}
\end{equation*}
\itshape

\noindent and 

\upshape
\begin{equation*}
\lver \left( \zeta_{0},u_{0},v_{0}, \frac{\textbf{V}^{\sharp}_{0}}{1+\epsilon \zeta_{0} - \beta b} \right) \rver_{X^{s}_{\mu}} + \lver b \rver_{H^{s+1}} \leq A.
\end{equation*}
\itshape
 
\noindent Then, there exists an existence time \upshape$T > 0$ \itshape and a unique solution \upshape$\left( \zeta, u, v, \textbf{V}^{\sharp} \right)$ \itshape on $[0,T]$ to the Boussinesq-Coriolis equations \eqref{boussinesq_eq} with initial data \upshape$\left( \zeta_{0}, u_{0}, v_{0}, \textbf{V}^{\sharp}_{0} \right)$ \itshape such that we have \small \upshape$\left( \zeta, u, v, \frac{\textbf{V}^{\sharp}}{h} \right) \in \mathcal{C} \left([0,T]; X^{s}(\R) \right)$ \itshape \normalsize with $h=1+\epsilon \zeta - \beta b$. Moreover, 

\upshape
\begin{equation*}
T =  \frac{T_{0}}{\max(\mu, \frac{\epsilon}{\text{Ro}} \sqrt{\mu})} \text{    ,    }  \frac{1}{T_{0}} = c^{1} \text{   and   }\underset{[0,T]}{\max} \lver \left(\zeta,u,v, \frac{\textbf{V}^{\sharp}}{h} \right)(t, \cdot) \rver_{X^{s}_{\mu}} = c^{2},
\end{equation*}
\itshape

\noindent with \upshape $c^{j} = C \left(A, \mu_{\max}, \frac{1}{h_{\min}} \right)$.\itshape

\end{prop}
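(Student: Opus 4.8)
The plan is to recast \eqref{boussinesq_eq} as a quasilinear system in $(\zeta,u,v,\textbf{W})$, where $u,v$ denote $\overline u,\overline v$ and $\textbf{W}:=\textbf{V}^{\sharp}/h=(W_{1},W_{2})^{t}$, and then to run an energy method tuned to the anisotropic space $X^{s}_{\mu}$. The renormalization is the crucial preliminary: the fourth equation of \eqref{boussinesq_eq} contains $\epsilon\textbf{V}^{\sharp}\partial_{x}u$, which would make that equation lose a derivative on $u$; by Remark \ref{renorm_v_sharp}, $\textbf{W}$ instead solves the clean transport--rotation equation $\partial_{t}\textbf{W}+\epsilon u\,\partial_{x}\textbf{W}+\tfrac{\epsilon}{\text{Ro}}\textbf{W}^{\perp}=0$, which propagates $H^{s+1}$ regularity without loss. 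Setting $T_{\mu}:=1-\tfrac{\mu}{3}\partial_{x}^{2}$ (invertible on every $H^{s}$, with $\lVert T_{\mu}^{-1}\rVert\le1$ and $\lVert T_{\mu}^{-1}\partial_{x}^{2}\rVert\le3/\mu$) and writing $v^{\sharp}/h=W_{2}$, the system has a transparent structure: a wave block in $(\zeta,u)$ coupled through $\partial_{x}\zeta$ and $\partial_{x}(hu)$; two transported unknowns $v$ and $\textbf{W}$; the skew Coriolis couplings $-\tfrac{\epsilon}{\text{Ro}}v$, $+\tfrac{\epsilon}{\text{Ro}}u$ and $\tfrac{\epsilon}{\text{Ro}}\textbf{W}^{\perp}$; and one genuine coupling $\tfrac{\epsilon}{\text{Ro}}\tfrac{\mu^{3/2}}{24}\partial_{x}^{2}W_{2}$ in the $u$--equation, of effective size $\tfrac{\epsilon}{\text{Ro}}\sqrt{\mu}$ once the two derivatives are measured in $X^{s}_{\mu}$.

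The heart of the proof is a uniform a priori bound on $E^{s}(t):=\lver(\zeta,u,v,\textbf{W})(t)\rver_{X^{s}_{\mu}}$. I would use as Lyapunov functional the $h$--weighted version of $\lver\cdot\rver_{X^{s}_{\mu}}^{2}$, in which the $u$--entries $\lver u\rver_{H^{s}}^{2}$ and $\mu\lver\partial_{x}u\rver_{H^{s}}^{2}$ are weighted by $h$; this is equivalent to $\lver\cdot\rver_{X^{s}_{\mu}}^{2}$ because $h=1+\mathcal O(\mu)$ and $h\ge h_{\min}/2$, and it symmetrizes the $(\zeta,u)$ wave block. Commuting $\Lambda^{s}=(1-\partial_{x}^{2})^{s/2}$ with the equations and pairing the continuity equation with $\Lambda^{s}\zeta$ and the (un-inverted) $T_{\mu}$--equation with $h\Lambda^{s}u$, three cancellations must be exploited: the top-order cross terms $(\partial_{x}(h\Lambda^{s}u),\Lambda^{s}\zeta)$ and $(\partial_{x}\Lambda^{s}\zeta,h\Lambda^{s}u)$ cancel exactly after one integration by parts, leaving variable-coefficient remainders driven by $\partial_{t}h=\epsilon\partial_{t}\zeta$ and $\partial_{x}h$ of size $\mathcal O(\mu)$; the Coriolis terms are skew and cancel pairwise, $(\textbf{W}^{\perp},\textbf{W})=0$ and $-\tfrac{\epsilon}{\text{Ro}}(v,u)+\tfrac{\epsilon}{\text{Ro}}(u,v)=0$, up to commutators bounded by $\tfrac{\epsilon}{\text{Ro}}(E^{s})^{2}$; and the genuine coupling, after one integration by parts, is $-\tfrac{\epsilon}{\text{Ro}}\tfrac{\mu^{3/2}}{24}(\partial_{x}\Lambda^{s}W_{2},h\,\partial_{x}\Lambda^{s}u)$, which the $\mu$--weights turn into a bound $\tfrac{\epsilon}{\text{Ro}}\sqrt{\mu}\,(E^{s})^{2}$ since $\sqrt{\mu}\,\lver\partial_{x}\Lambda^{s}u\rver_{2}$ and $\sqrt{\mu}\,\lver\partial_{x}\Lambda^{s}W_{2}\rver_{2}$ are both controlled by $E^{s}$. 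The transport estimates for $v$ (at level $s$) and for $\textbf{W}$ (at levels $s$ and $s+1$, the top one carrying the weight $\mu$) lose no derivative and close at rate $\epsilon=\mathcal O(\mu)$ via Kato--Ponce commutator and tame product estimates, as do the nonlinear contributions of $\epsilon u\partial_{x}u$ and of the transport terms. Altogether $\tfrac{d}{dt}E^{s}\le\max\!\left(\mu,\tfrac{\epsilon}{\text{Ro}}\sqrt{\mu}\right)C(E^{s})$, which is exactly the rate producing the announced time $T=T_{0}/\max(\mu,\tfrac{\epsilon}{\text{Ro}}\sqrt{\mu})$.

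Existence and uniqueness then follow from a by-now standard scheme (see Part 2.2 of \cite{Castro_Lannes_shallow_water} and Chapter 4 of \cite{Lannes_ww}): after inverting $T_{\mu}$ so that the system takes the form $\partial_{t}U=F(U)$ on $X^{s}(\R)$, I would regularize it---e.g. by Friedrichs mollifiers $J_{\delta}$ in the nonlinear and coupling terms---to obtain a Banach-space ODE solvable on a $\delta$--dependent interval by Cauchy--Lipschitz; the $\delta$--uniform energy estimate of the previous paragraph then yields an existence time $T$ and bounds independent of $\delta$, and one passes to the limit $\delta\to0$ by compactness together with strong convergence in a weaker norm. One checks that $h\ge h_{\min}/2$ is propagated on $[0,T]$ by continuity, so that $T_{\mu}^{-1}$ and the change of unknown $\textbf{W}\mapsto h\textbf{W}=\textbf{V}^{\sharp}$ remain well defined; uniqueness and the continuity in time of $(\zeta,u,v,\textbf{W})$ with values in $X^{s}$ are obtained by applying the energy estimate to the difference of two solutions, the top-order time-continuity requiring a Bona--Smith regularization argument. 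The main obstacle throughout is the coupling term $\tfrac{\epsilon}{\text{Ro}}\mu^{3/2}\partial_{x}^{2}W_{2}$: it carries two derivatives on $\textbf{W}$ and a priori threatens the estimate, and it is precisely the combination of the renormalization $\textbf{W}=\textbf{V}^{\sharp}/h$---which removes the derivative loss hidden in the $\textbf{V}^{\sharp}$--equation---with the $\mu$--weighted norm $X^{s}_{\mu}$---which converts the two surplus derivatives into a benign factor $\sqrt{\mu}$---that makes the whole argument close uniformly in $\mu$.
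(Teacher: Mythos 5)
Your proposal is correct and follows essentially the same route as the paper: symmetrization of the $(\zeta,u,v)$ block by the weight $h$ (the paper's operator $S(U)$), skew-adjointness of the Coriolis couplings, the renormalization $\textbf{V}^{\sharp}/h$ from Remark \ref{renorm_v_sharp} to close the transport estimates, and the key observation that the $\mu$-weighted norm $X^{s}_{\mu}$ converts the coupling $\frac{\epsilon}{\text{Ro}}\mu^{3/2}\partial_{x}^{2}(v^{\sharp}/h)$ into a contribution of size $\frac{\epsilon}{\text{Ro}}\sqrt{\mu}$, yielding $\frac{d}{dt}\mathcal{E}^{s}\leq \max(\mu,\frac{\epsilon}{\text{Ro}}\sqrt{\mu})\,C$ and the stated existence time. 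The only difference is that you spell out the mollification/Bona--Smith construction of solutions, which the paper handles by citing \cite{israwi_green_naghdi}.
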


\begin{proof}
\noindent We only give the energy estimates. For the existence see for instance the proof of Theorem 1 in \cite{israwi_green_naghdi}. We assume that $\left( \zeta, u, v, \textbf{V}^{\sharp} \right) $ solves \eqref{boussinesq_eq} on $\left[0,\frac{T_{0}}{\max(\mu, \frac{\epsilon}{\text{Ro}} \sqrt{\mu})} \right]$ and that

\begin{equation*}
1+\epsilon \zeta - \beta b \geq \frac{h_{\min}}{2} \text{   on   } \left[0,\frac{T_{0}}{\max(\mu, \frac{\epsilon}{\text{Ro}} \sqrt{\mu})} \right].
\end{equation*}

\noindent We denote $U = \left(\zeta,u,v \right)^{t}$ and we focus first on the first three equations. This part is a small adaptation  of the proof of Theorem 1 in \cite{israwi_green_naghdi}. The the first three equations of the Boussinesq-Coriolis equations can be symmetrized, as an hyperbolic system, by multiplying the second and the third equations by $h=1+\epsilon \zeta - \beta b$. Then, we obtain the following system

\begin{equation*}
\mathcal{A}_{0}(U) \partial_{t} U + \mathcal{A}_{1}(U) \partial_{x} U + B_{1} U + \frac{\epsilon}{\text{Ro}} B_{2}(U) U = \frac{\epsilon}{\text{Ro}} \mu^{\frac{3}{2}} F(h,v^{\sharp}),
\end{equation*}

\noindent where

\begin{equation*}
\mathcal{A}_{0}(U) = \begin{pmatrix} 1 & 0 & 0 \\ 0 & h - \mu \frac{h}{3} \partial_{x}^{2} & 0 \\ 0 & 0 & h \end{pmatrix} \text{, } \mathcal{A}_{1}(U) = \begin{pmatrix} \epsilon u & h & h\\ h & \epsilon hu & 0 \\ h & 0 & \epsilon hu \end{pmatrix}
\end{equation*}

\noindent and

\begin{equation*}
B_{1} = \begin{pmatrix} 0 & -\beta \partial_{x} b & 0 \\ 0 & 0 & 0 \\ 0 & 0 & 0 \end{pmatrix} \text{   ,   } B_{2}(U) = \begin{pmatrix} 0 & 0 & 0 \\ 0 & 0 & -h \\ 0 & h & 0 \end{pmatrix} \text{   and   } F(h,v^{\sharp}) = \begin{pmatrix} 0 \\ - \frac{h}{24} \partial_{x}^{2} \frac{v^{\sharp}}{h} \\ 0 \end{pmatrix}.
\end{equation*}

\noindent Then we remark that $\mathcal{A}_{1}$ is symmetric and there exists $c_{1},c_{2} = C \left( \frac{1}{h_{\min}}, \lver h \rver_{L^{\infty}} \right)$ such that

\begin{equation*}
c_{1} \lver \partial_{x} f \rver^{2}_{2} \leq \left( - \frac{1}{3} \partial_{x} \left(h \partial_{x} f \right) ,f \right)_{2}  \leq c_{2} \lver \partial_{x} f \rver^{2}_{2}.
\end{equation*}

\noindent Hence we introduce the symmetric matrix operator

\begin{equation*}
S(U) = \begin{pmatrix} 1 & 0 & 0 \\ 0 & h - \frac{\mu}{3} \partial_{x} \left( h \partial_{x} \cdot \right) & 0 \\ 0 & 0 & h \end{pmatrix}
\end{equation*}
 
\noindent and the energy associated

\begin{equation*}
\mathcal{E}^{s}(U) = \left(S(U) \Lambda^{s} U, \Lambda^{s} U \right)_{2}.
\end{equation*}

\noindent Then, we see that

\begin{equation*}
\left(\Lambda^{s} B_{2}(U) U, \Lambda^{s} U \right)_{2} = 0
\end{equation*}

\noindent and  by standard product estimates we get

\begin{equation}\label{vsharp-estim_prob}
\mu^{\frac{3}{2}} \lver \left(h \Lambda^{s} \partial_{x}^{2} \frac{v^{\sharp}}{h}, \Lambda^{s} u \right)_{2} \rver \leq \sqrt{\mu} C(\mathcal{E}^{s}(U), \lver b \rver_{H^{s+1}}) \sqrt{\mu} \lver \frac{v^{\sharp}}{h} \rver_{H^{s+1}}. 
\end{equation}

\noindent Furthermore, notice that

\begin{align*}
\mu \lver \partial_{t} \partial_{x} u \rver_{H^{s}} &= \mu \lver \hspace{-0.05cm} \left(1- \frac{\mu}{3} \partial_{x}^{2} \right)^{-1} \partial_{x} \left( \partial_{x} \zeta + \epsilon u \partial_{x} u - \frac{\epsilon}{\text{Ro}} v + \frac{\epsilon}{\text{Ro}} \frac{\mu^{\frac{3}{2}}}{24} \partial_{x}^{2} \frac{v^{\sharp}}{h} \hspace{-0.05cm}\right) \hspace{-0.05cm} \rver_{H^{s}},\\
&\leq C \hspace{-0.05cm} \left(\mu_{\max}, \mathcal{E}^{s}(U), \sqrt{\mu} \lver \partial_{x} \frac{v^{\sharp}}{h} \rver_{H^{s}} \right).
\end{align*}

\noindent and therefore

\begin{equation*}
\left(\frac{\mu}{3} \partial_{x} h \Lambda^{s} \partial_{x} \partial_{t} u, \Lambda^{s} u \right)_{2} \leq  \mu C \left( \mathcal{E}^{s}(U), \lver b \rver_{H^{s+1}}, \sqrt{\mu} \lver \partial_{x} \frac{v^{\sharp}}{h} \rver_{H^{s}} \right).
\end{equation*}

\noindent Gathering all the previous estimate and proceeding as in \cite{israwi_green_naghdi} we obtain

\begin{equation*}
\frac{d}{dt} \mathcal{E}^{s}(U) \leq \max \left( \mu, \frac{\epsilon}{\text{Ro}} \sqrt{\mu} \right) C \left(\mathcal{E}^{s}(U), \lver b \rver_{H^{s+1}}, \lver \frac{v^{\sharp}}{h} \rver_{H^{s}}, \sqrt{\mu} \lver \partial_{x} \frac{v^{\sharp}}{h} \rver_{H^{s}} \right).
\end{equation*}

\noindent Furthermore, using Remark \ref{renorm_v_sharp} and the Kato-Ponce estimate, we get

\begin{equation*}
\begin{aligned}
&\frac{d}{dt} \lver  \frac{\textbf{V}^{\sharp}}{h} \rver_{H^{s}}^{2} \leq \mu C \lver u \rver_{H^{s}} \lver  \frac{\textbf{V}^{\sharp}}{h} \rver_{H^{s}}^{2},\\
&\frac{d}{dt} \mu \lver \partial_{x}  \frac{\textbf{V}^{\sharp}}{h} \rver_{H^{s}}^{2} \leq \mu C \left(\sqrt{\mu} \lver \partial_{x} u \rver_{H^{s}} \lver \frac{\textbf{V}^{\sharp}}{h} \rver_{H^{s}}^{2} + \lver u \rver_{H^{s}} \sqrt{\mu} \lver \partial_{x} \frac{\textbf{V}^{\sharp}}{h} \rver_{H^{s}} \right) \sqrt{\mu} \lver \partial_{x} \frac{\textbf{V}^{\sharp}}{h} \rver_{H^{s}}.
\end{aligned}
\end{equation*}

\noindent Then, the result follows.
\end{proof}

\begin{remark}\label{term_vort_strange_explanation}
\noindent Notice that the previous energy estimates do not imply that \upshape$\textbf{V}^{\sharp} \in H^{s+1}(\R)$. \itshape Hence, it is essential that in Inequality \eqref{vsharp-estim_prob} we have the term $\partial_{x}^{2} \frac{v^{\sharp}}{h}$ and not simply $\partial_{x}^{2} v^{\sharp}$ \upshape(\itshape see Remark \ref{term_vort_strange}\upshape)\itshape.
\end{remark}

\noindent Then, we similarly can prove a local wellposedness result for System \eqref{weak_rot_boussi_eq}. 

\begin{cor}\label{existence_weak_rot_boussi}
\noindent Let $A > 0$, $s > \frac{1}{2} + 1$, \upshape $ \left( \zeta_{0},u_{0},v_{0} \right) \in H^{s} (\R) \times H^{s+1} (\R) \times H^{s} (\R)$ \itshape and \upshape $b \in H^{s+1}(\R)$\itshape. We suppose that \upshape$\left( \epsilon,\beta,\gamma,\mu,\text{Ro} \right) \in \mathcal{A}_{Bouss}$ \itshape.  We assume that

\upshape
\begin{equation*}
\exists \, h_{\min}  > 0 \text{ ,  } \epsilon \zeta _{0} + 1 - \beta b \geq h_{\min}
\end{equation*}
\itshape

\noindent and 

\upshape
\begin{equation*}
\lver \zeta_{0} \rver_{H^{s}} + \lver u_{0} \rver_{H^{s}} + \sqrt{\mu} \lver \partial_{x} u_{0} \rver_{H^{s}} + \lver v_{0} \rver_{H^{s}} + \lver b \rver_{H^{s+1}} \leq A.
\end{equation*}
\itshape
 
\noindent Then, there exists an existence time \upshape$T > 0$ \itshape and a unique solution to the Boussinesq-Coriolis equations \eqref{boussinesq_eq} \small \upshape$\left( \zeta, u, v \right) \in \mathcal{C} \left([0,T]; H^{s} (\R) \times H^{s+1} (\R) \times H^{s} (\R) \right)$ \itshape \normalsize with initial data \upshape$\left( \zeta_{0}, u_{0}, v_{0} \right)$\itshape. Moreover, 

\upshape
\begin{equation*}
T =  \frac{T_{0}}{\mu} \text{    ,    }  \frac{1}{T_{0}} = c^{1} \text{   and   } \hspace{0.2cm} \underset{[0,T]}{\max} \hspace{0.2cm} \lver \zeta(t,\cdot) \rver_{H^{s}} + \lver u(t,\cdot) \rver_{H^{s}} + \sqrt{\mu} \lver \partial_{x} u(t,\cdot) \rver_{H^{s}} + \lver v(t,\cdot) \rver_{H^{s}} = c^{2},
\end{equation*}
\itshape

\noindent with \upshape $c^{j} = C \left(A, \mu_{\max}, \frac{1}{h_{\min}} \right)$.\itshape

\end{cor}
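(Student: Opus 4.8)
The plan is to deduce the result from the a priori estimate already established for Proposition \ref{existence_boussi}. Observe first that the three-equation system \eqref{weak_rot_boussi_eq} is nothing but the full Boussinesq--Coriolis system \eqref{boussinesq_eq} restricted to the invariant set $\textbf{V}^{\sharp}\equiv 0$: the evolution equation for $\textbf{V}^{\sharp}$ in \eqref{boussinesq_eq} is linear and homogeneous in $\textbf{V}^{\sharp}$ (Remark \ref{renorm_v_sharp}), so the datum $\textbf{V}^{\sharp}_{0}=0$ propagates as $\textbf{V}^{\sharp}(t)\equiv 0$, and the coupling source $\frac{\epsilon}{\text{Ro}}\mu^{\frac{3}{2}}\frac{1}{24}\partial_{x}^{2}\frac{v^{\sharp}}{h}$ in the second equation then disappears. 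It therefore suffices to rerun the energy argument of Proposition \ref{existence_boussi} for $U=(\zeta,u,v)^{t}$ alone, with the forcing $F$ set to zero.

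First I would symmetrize as before: multiplying the last two equations by $h=1+\epsilon\zeta-\beta b$ puts the system in the quasilinear form $\mathcal{A}_{0}(U)\partial_{t}U+\mathcal{A}_{1}(U)\partial_{x}U+B_{1}U+\frac{\epsilon}{\text{Ro}}B_{2}(U)U=0$ with the very matrices $\mathcal{A}_{0},\mathcal{A}_{1},B_{1},B_{2}$ of Proposition \ref{existence_boussi} but a vanishing right-hand side. With the symmetric operator $S(U)$ and the energy $\mathcal{E}^{s}(U)=(S(U)\Lambda^{s}U,\Lambda^{s}U)_{2}$, the coercivity bound $c_{1}|\partial_{x}f|_{2}^{2}\le(-\frac{1}{3}\partial_{x}(h\partial_{x}f),f)_{2}\le c_{2}|\partial_{x}f|_{2}^{2}$ shows that $\mathcal{E}^{s}(U)$ is equivalent, uniformly in $\mu$, to $|\zeta|_{H^{s}}^{2}+|u|_{H^{s}}^{2}+\mu|\partial_{x}u|_{H^{s}}^{2}+|v|_{H^{s}}^{2}$, which is precisely the quantity controlled in the statement.

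The core is the differential inequality $\frac{d}{dt}\mathcal{E}^{s}(U)\le\mu\,C(\mathcal{E}^{s}(U),|b|_{H^{s+1}})$, i.e. that every contribution is $\mathcal{O}(\mu)$. The principal part $\mathcal{A}_{1}\partial_{x}$ is symmetric, so commuting $\Lambda^{s}$ through it leaves only $\partial_{x}\mathcal{A}_{1}$ and the commutators $[\Lambda^{s},\mathcal{A}_{1}]$; since the non-constant entries of $\mathcal{A}_{1}$ are $\mathcal{O}(\max(\epsilon,\beta))=\mathcal{O}(\mu)$ and the transport coefficient $\epsilon u$ is $\mathcal{O}(\epsilon)=\mathcal{O}(\mu)$, Kato--Ponce gives an $\mathcal{O}(\mu)$ bound. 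The bathymetric term $B_{1}U$ is $\mathcal{O}(\beta)=\mathcal{O}(\mu)$ and the symmetrizer cost $\partial_{t}S(U)$ carries a factor $\partial_{t}h=\epsilon\partial_{t}\zeta=\mathcal{O}(\mu)$. For the rotation term I use that the $2\times 2$ block of $B_{2}(U)$ is pointwise skew-symmetric, whence $(B_{2}(U)\Lambda^{s}U,\Lambda^{s}U)_{2}=0$ exactly; the sole residue $\frac{\epsilon}{\text{Ro}}([\Lambda^{s},B_{2}(U)]U,\Lambda^{s}U)_{2}$ has its commutator see only $h-1=\epsilon\zeta-\beta b=\mathcal{O}(\mu)$, so it is $\frac{\epsilon}{\text{Ro}}\mathcal{O}(\mu)\le\mathcal{O}(\mu)$ because $\frac{\epsilon}{\text{Ro}}\le 1$ on $\mathcal{A}_{Bouss}$. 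The remaining term $(\frac{\mu}{3}\partial_{x}h\,\Lambda^{s}\partial_{x}\partial_{t}u,\Lambda^{s}u)_{2}$ is handled as in Proposition \ref{existence_boussi} by inverting $(1-\frac{\mu}{3}\partial_{x}^{2})$ to estimate $\mu|\partial_{t}\partial_{x}u|_{H^{s}}$; this is exactly where dropping the $v^{\sharp}$ source removes the $\frac{\epsilon}{\text{Ro}}\sqrt{\mu}$ contribution of \eqref{vsharp-estim_prob} and upgrades the time scale from $T_{0}/\max(\mu,\frac{\epsilon}{\text{Ro}}\sqrt{\mu})$ to $T_{0}/\mu$.

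Gronwall's lemma then bounds $\mathcal{E}^{s}(U)$ uniformly on $[0,T_{0}/\mu]$ and, after shrinking $T_{0}$, preserves $h\ge h_{\min}/2$. Existence and uniqueness follow from the same construction invoked for Proposition \ref{existence_boussi} (Friedrichs regularization and the iteration of Theorem 1 in \cite{israwi_green_naghdi}), uniqueness coming from the identical estimate applied to the difference of two solutions. I expect the only genuine point to verify is the Coriolis commutator: one must confirm its contribution is $\mathcal{O}(\mu)$ and not merely $\mathcal{O}(\frac{\epsilon}{\text{Ro}})$, which rests on the pointwise skew-symmetry of the $B_{2}$ block together with $h-1=\mathcal{O}(\mu)$; everything else is inherited line by line from the proof of Proposition \ref{existence_boussi}.
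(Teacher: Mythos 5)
Your proposal is correct and follows essentially the same route as the paper, which proves the corollary by rerunning the symmetrized energy argument of Proposition \ref{existence_boussi} with the $v^{\sharp}$ forcing removed, so that the only contribution of size $\frac{\epsilon}{\text{Ro}}\sqrt{\mu}$, namely \eqref{vsharp-estim_prob}, disappears and $\frac{d}{dt}\mathcal{E}^{s}(U)\leq \mu\, C(\mathcal{E}^{s}(U),\lver b\rver_{H^{s+1}})$ yields the time scale $T_{0}/\mu$. Your explicit verification that the Coriolis term costs only $\frac{\epsilon}{\text{Ro}}\,\mathcal{O}(\max(\epsilon,\beta))=\mathcal{O}(\mu)$ via the pointwise skew-symmetry of $B_{2}(U)$ plus the commutator seeing only $h-1$, and your remark that \eqref{weak_rot_boussi_eq} is the restriction of \eqref{boussinesq_eq} to the invariant set $\textbf{V}^{\sharp}\equiv 0$, are consistent with (and slightly more detailed than) what the paper leaves implicit.
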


\noindent Furthermore, we have a stability result for the Boussinesq-Coriolis system \eqref{boussinesq_eq}.

\begin{prop}\label{stability_boussi}
\noindent Let the assumptions of Proposition \ref{existence_boussi} satisfied. Suppose that there exists  \upshape$\left( \tilde{\zeta}, \tilde{u}, \tilde{v}, \frac{\tilde{\textbf{V}}^{\sharp}}{\tilde{h}} \right) \in \mathcal{C} \left(\left[0,\frac{T_{0}}{\max \left(\mu, \frac{\epsilon \sqrt{\mu}}{\text{Ro}} \right)} \right]; X^{s}(\R) \right)$ \itshape satisfying

\upshape
\begin{equation*}
\left\{
\begin{aligned}
&\partial_{t} \tilde{\zeta} + \partial_{x} \left( \tilde{h} \tilde{u}  \right) = R_{1},\\
&\left(1- \frac{\mu}{3} \partial_{x}^{2} \right) \partial_{t} \tilde{u} + \partial_{x} \tilde{\zeta} + \epsilon \tilde{u} \partial_{x} \tilde{u} - \frac{\epsilon}{\text{Ro}} \tilde{v} + \frac{\epsilon}{\text{Ro}} \mu^{\frac{3}{2}} \frac{1}{24} \partial_{x} \frac{\tilde{v}^{\sharp}}{\tilde{h}} = R_{2},\\
&\partial_{t} \tilde{v} + \epsilon \tilde{u} \partial_{x} \tilde{v} + \frac{\epsilon}{\text{Ro}} \tilde{u} = R_{3},\\
&\partial_{t} \frac{\tilde{\textbf{V}}^{\sharp}}{\tilde{h}} + \epsilon + \epsilon \tilde{u} \partial_{x} \frac{\tilde{\textbf{V}}^{\sharp}}{\tilde{h}} + \frac{\epsilon}{\text{Ro}} \frac{\tilde{\textbf{V}}^{\sharp}}{\tilde{h}} = R_{4},
\end{aligned}
\right.
\end{equation*}
\itshape

\noindent where $\tilde{h} = 1 + \epsilon \tilde{\zeta} - \beta b$ and with $R = (R_{1}, R_{2}, R_{3}, R_{4}) \in L^{\infty} \left( \left[0, \frac{T_{0}}{\max \left(\mu, \frac{\epsilon \sqrt{\mu}}{\text{Ro}} \right)} \right] ;X^{s}(\R) \right)$. Then, if we denote \upshape $\mathfrak{e} = \left( \zeta, u, v, \textbf{V}^{\sharp} \right) - \left( \tilde{\zeta}, \tilde{u}, \tilde{v}, \tilde{\textbf{V}}^{\sharp} \right)$ \itshape where \upshape $\left( \zeta, u, v, \textbf{V}^{\sharp} \right)$ \itshape is the solution given in Proposition \ref{existence_boussi}, we have

\upshape
\small
\begin{equation*}
\lver \mathfrak{e}(t) \rver_{X^{s-1}_{\mu}} \leq C \left(A, \mu_{\max}, \frac{1}{h_{\min}}, \lver \left( \tilde{\zeta}, \tilde{u}, \tilde{v}, \frac{\tilde{\textbf{V}}^{\sharp}}{\tilde{h}}, R \right) \rver_{L^{\infty} \left([0,t]; X_{\mu}^{s} \times X^{s}_{\mu} \right)} \right) \hspace{-0.1cm} \left(\lver \mathfrak{e}_{|t=0} \rver_{X^{s-1}_{\mu}} + t \lver R \rver_{X_{\mu}^{s}} \right).
\end{equation*}
\normalsize
\itshape

\end{prop}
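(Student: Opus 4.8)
The plan is to run the symmetrizer/energy method of the existence result (Proposition \ref{existence_boussi}) on the system satisfied by the difference $\mathfrak{e}$, but at one order of regularity lower, which is the source of the $X^{s-1}_\mu$ (rather than $X^s_\mu$) norm in the statement. First I would derive the equations for $\mathfrak{e}$: subtracting the equations solved by $(\tilde\zeta, \tilde u, \tilde v, \tilde{\textbf V}^\sharp)$ up to the residuals $R_1, \dots, R_4$ from the Boussinesq-Coriolis equations \eqref{boussinesq_eq}, and linearizing the quadratic terms through $f\partial_x f - \tilde f\partial_x\tilde f = (f-\tilde f)\partial_x f + \tilde f\,\partial_x(f-\tilde f)$ (and similarly for $\partial_x(h\overline u)$), one finds that the first three components $\mathfrak{e}_U := (\zeta-\tilde\zeta, u-\tilde u, v-\tilde v)^t$ solve a system of exactly the symmetrizable form of Proposition \ref{existence_boussi},
\begin{equation*}
\mathcal{A}_0(U)\partial_t\mathfrak{e}_U + \mathcal{A}_1(U)\partial_x\mathfrak{e}_U + B_1\mathfrak{e}_U + \frac{\epsilon}{\text{Ro}}B_2(U)\mathfrak{e}_U = \mathfrak{R},
\end{equation*}
where $U=(\zeta,u,v)^t$ and the source $\mathfrak{R}$ gathers the residuals $R_1,R_2,R_3$, the dispersive coupling built from the difference of the vorticity unknowns, and the remaining lower-order terms, each of the schematic form (difference) $\times$ (coefficient depending on the two solutions). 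The structural point that makes the loss of a single derivative work is that every such product carries at most $s-1$ derivatives on $\mathfrak{e}$ and is controlled by the product estimate using the $X^s_\mu$-bounds on $U$ and $\tilde U$ furnished by Proposition \ref{existence_boussi}.

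Then I would set $\mathcal{E}^{s-1}(\mathfrak{e}_U) = (S(U)\Lambda^{s-1}\mathfrak{e}_U, \Lambda^{s-1}\mathfrak{e}_U)_2$ with the same symmetrizer $S(U)$ as before, apply $\Lambda^{s-1}$ to the difference system and pair with $\Lambda^{s-1}\mathfrak{e}_U$. The symmetry of $\mathcal{A}_1$ and the $\partial_t S(U)$ term produce, after integration by parts, Kato-Ponce commutators that are estimated using only $s-1$ derivatives of $\mathfrak{e}_U$ together with the solution bounds, while the antisymmetric contribution $(\Lambda^{s-1}B_2(U)\mathfrak{e}_U, \Lambda^{s-1}\mathfrak{e}_U)_2$ vanishes exactly as in Proposition \ref{existence_boussi}. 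The delicate term is the dispersive coupling $\frac{\epsilon}{\text{Ro}}\mu^{\frac{3}{2}}\frac{1}{24}\partial_x^2\frac{v^\sharp}{h}$: as in \eqref{vsharp-estim_prob}, it must be handled through the \emph{renormalized} difference $\frac{\textbf V^\sharp}{h} - \frac{\tilde{\textbf V}^\sharp}{\tilde h}$, with one $\sqrt\mu$ absorbed by the prefactor and the remaining $\sqrt\mu\,\partial_x$ falling on $\frac{v^\sharp}{h}$, which is controlled in $X^s_\mu$; this keeps the contribution at order $\max(\mu, \frac{\epsilon}{\text{Ro}}\sqrt\mu)$ and never requires an $H^{s+1}$ bound on the vorticity variable.

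Finally I would close the fourth component through its transport form from Remark \ref{renorm_v_sharp}: the difference $\frac{\textbf V^\sharp}{h} - \frac{\tilde{\textbf V}^\sharp}{\tilde h}$ satisfies a transport equation with antisymmetric rotation term whose source consists of $R_4$, the difference of transport velocities (a function of $\mathfrak{e}_U$) and the difference of depths, so a Kato-Ponce energy estimate at level $s-1$ closes it just as the last two displays of the proof of Proposition \ref{existence_boussi}. Writing $\mathcal{E}$ for the total energy (the $\mathcal{E}^{s-1}$ of $\mathfrak{e}_U$ plus the $X^{s-1}_\mu$-energy of the renormalized vorticity difference) and collecting the two estimates gives $\frac{d}{dt}\mathcal{E} \le C\,\mathcal{E} + C\lver R\rver_{X^s_\mu}^2$, whence Gronwall's lemma yields the stated bound, the factor $t\lver R\rver_{X^s_\mu}$ arising from $\int_0^t\lver R\rver \le t\lver R\rver_{L^\infty}$. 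The main obstacle is to carry out the loss of exactly one derivative uniformly in $\mu$ while respecting the renormalization: the difference of the dispersive term must always be estimated via $\frac{v^\sharp}{h}$ rather than $v^\sharp$ itself (see Remark \ref{term_vort_strange_explanation}), since the only available control on the vorticity variable is in $X^s_\mu$, with no spare derivative.
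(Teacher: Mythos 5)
Your proposal follows essentially the same route as the paper's proof: you subtract the two systems, keep the symmetrizable structure $\mathcal{A}_{0}(U)\partial_{t}\mathfrak{e}_{a}+\mathcal{A}_{1}(U)\partial_{x}\mathfrak{e}_{a}+B_{1}\mathfrak{e}_{a}+\frac{\epsilon}{\text{Ro}}B_{2}(U)\mathfrak{e}_{a}$ with the source collecting the residuals and the (difference)$\times$(coefficient) terms, perform the $S(U)$-energy estimate at level $s-1$, treat the vorticity part through the renormalized difference $\frac{\textbf{V}^{\sharp}}{h}-\frac{\tilde{\textbf{V}}^{\sharp}}{\tilde{h}}$ and its transport equation exactly as the paper does, and conclude with Gronwall. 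This is the paper's argument (an adaptation of Proposition 6.5 in \cite{Lannes_ww}), so there is nothing substantive to correct.
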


\begin{proof}
\noindent This proof is a small adaptation of the one of Proposition 6.5 in \cite{Lannes_ww} (see also \cite{Alvarez_Lannes}). We denote $\tilde{U} = \left( \tilde{\zeta}, \tilde{u}, \tilde{v} \right)$, $\mathfrak{e}_{a} = U - \tilde{U}$, $R_{a} = (R_{1}, R_{2}, R_{3})$ and we keep the notations of the proof of Proposition \ref{existence_boussi}. Since the Boussinesq-Coriolis equations are symmetrizable, we have

\begin{equation*}
\left\{
\begin{aligned}
&\mathcal{A}_{0}(U) \partial_{t} \mathfrak{e}_{a} + \mathcal{A}_{1}(U) \partial_{x} \mathfrak{e}_{a}  + B_{1} \mathfrak{e}_{a} + \frac{\epsilon}{\text{Ro}} B_{2}(U)  \mathfrak{e}_{a} = \frac{\epsilon}{\text{Ro}} \mu^{\frac{3}{2}} F(h, v^{\sharp} - \tilde{v}^{\sharp}) + G,\\
&\partial_{t} \left(\frac{\textbf{V}^{\sharp}}{h} - \frac{\tilde{\textbf{V}}^{\sharp}}{\tilde{h}} \right) + \epsilon u \partial_{x} \left(\frac{\textbf{V}^{\sharp}}{h} - \frac{\tilde{\textbf{V}}^{\sharp}}{\tilde{h}} \right) + \frac{\epsilon}{\text{Ro}} \left(\frac{\textbf{V}^{\sharp}}{h} - \frac{\tilde{\textbf{V}}^{\sharp}}{\tilde{h}}\right)^{\perp} = H,
\end{aligned}
\right.
\end{equation*}

\noindent where

\begin{equation*}
\begin{aligned}
G = &F(h,\tilde{v}^{\sharp}) - F(\tilde{h},\tilde{v}^{\sharp}) - R_{a} - (\mathcal{A}_{0}(U) - \mathcal{A}_{0}(\tilde{U}) ) \partial_{t} \tilde{U} \\
&- (\mathcal{A}_{1}(U) - \mathcal{A}_{1}(\tilde{U})) \partial_{x} \tilde{U} - \frac{\epsilon}{\text{Ro}}(B_{2}(U) - B_{2}(\tilde{U}))U,\\
& \hspace{-0.7cm} H= \epsilon (\tilde{u} - u) \partial_{x} \frac{\tilde{\textbf{V}}^{\sharp}}{\tilde{h}} + R_{4}.\\
\end{aligned}
\end{equation*}

\noindent Then, using standard products estimates, we get (notice that $s > \frac{1}{2} + 1$)

\small
\begin{equation*}
\left( \Lambda^{s-1} G, \Lambda^{s-1} \mathfrak{e}_{a} \right)_{2} \leq \left( \lver R \rver_{X^{s}_{\mu}} + \mu C \left(\mathcal{E}^{s}(\tilde{U}), \mathcal{E}^{s-1}(\partial_{t} \tilde{U}), \lver \frac{\tilde{v}^{\sharp}}{\tilde{h}} \rver_{H^{s}}, \sqrt{\mu} \lver \partial_{x} \frac{\tilde{v}^{\sharp}}{\tilde{h}} \rver_{H^{s}} \right) \lver \mathfrak{e} \rver_{X^{s-1}} \right) \lver \mathfrak{e} \rver_{X^{s-1}}
\end{equation*}
\normalsize

\noindent and

\small
\begin{equation*}
\left(\Lambda^{s-1} H,\Lambda^{s-1} \left( \frac{\textbf{V}^{\sharp}}{h} - \frac{\tilde{\textbf{V}}^{\sharp}}{\tilde{h}} \right) \right)_{2} \hspace{-0.2cm} \leq \hspace{-0.1cm} \left(\hspace{-0.1cm} \lver R \rver_{X^{s}_{\mu}} + \mu C \left(\mathcal{E}^{s} (\tilde{U}), \lver \frac{\tilde{\textbf{V}}^{\sharp}}{\tilde{h}} \rver_{H^{s}}, \sqrt{\mu} \lver \partial_{x} \frac{\tilde{v}^{\sharp}}{\tilde{h}} \rver_{H^{s}} \right) \lver \mathfrak{e} \rver_{X^{s-1}} \right) \lver \mathfrak{e} \rver_{X^{s-1}} \hspace{-0.05cm}.
\end{equation*}
\normalsize

\noindent Then, the result follows from energy estimates and the Gronwall's lemma.
\end{proof}

\noindent The two previous results and  Theorem \ref{existence_ww} allow us to fully justify the Boussinesq-Coriolis equations. We recall that the operators $\overline{\textbf{V}} [\epsilon \zeta_{0}, \beta b](\Umuszero,\bm{\omega})$ and $\textbf{V}_{\text{sh}}[\epsilon \zeta, \beta b](\Umuszero,\bm{\omega})(t,X)$ are defined in \eqref{V_average} and \eqref{V_shear} respectively.

\begin{thm}\label{boussi_water_wave_compare}
\noindent Let \upshape$\textbf{N} \geq 7$ and \upshape$\left( \epsilon,\beta,\gamma,\mu,\text{Ro} \right) \in \mathcal{A}_{Bouss}$ \itshape. We assume that we are under the assumptions of Theorem \ref{existence_ww}. Then, we can define the following quantity \upshape $\left(u_{0},v_{0} \right)^{t} = \overline{\textbf{V}} [\epsilon \zeta_{0}, \beta b]((\Umuszero)_{0},\bm{\omega}_{0})$, $\left(u,v \right)^{t}  = \overline{\textbf{V}} [\epsilon \zeta, \beta b](\Umuszero,\bm{\omega})$, $\textbf{V}^{\sharp}_{0} = \textbf{V}^{\sharp} [\epsilon \zeta_{0}, \beta b]((\Umuszero)_{0},\bm{\omega}_{0})$, $\textbf{V}^{\sharp} = \textbf{V}^{\sharp} [\epsilon \zeta, \beta b](\Umuszero,\bm{\omega}_{0})$, \itshape and there exists a time $T > 0$ such that

\medskip

\noindent (i) $T$ has the form

\begin{equation*}
T = \frac{T_{0}}{\max(\mu, \frac{\epsilon}{\text{Ro}})},\text{ and }  \frac{1}{T_{0}} =c^{1}.
\end{equation*}
\itshape

\medskip

\noindent (ii) There exists a unique classical solution \small \upshape$\left( \zeta_{B}, u_{B}, v_{B}, \textbf{V}^{\sharp}_{B} \right)$ \itshape \normalsize of \eqref{boussinesq_eq} with the initial data \upshape$\left(\zeta_{0}, u_{0}, v_{0}, \textbf{V}^{\sharp}_{0} \right)$ \itshape  on $\left[ 0,T \right]$.

\medskip

\noindent (iii)  There exists a unique classical solution \upshape$\left(\zeta,\Umuszero, \bm{\omega} \right)$ \itshape of System \eqref{castro_lannes_formulation} with initial data \upshape$\left(\zeta_{0}, (\Umuszero)_{0}, \bm{\omega}_{0} \right)$ \itshape on $\left[ 0,T \right]$.

\medskip

\noindent (iv) The following error estimate holds, for $0 \leq t \leq T$,

\upshape
\begin{equation*}
\lver \left(\zeta,u,v, \textbf{V}^{\sharp} \right) - \left(\zeta_{B},u_{B}, v_{B}, \textbf{V}^{\sharp}_{B} \right) \rver_{L^{\infty}([0,t] \times \R)} \leq \mu^{2} t\, c^{2},
\end{equation*}
\itshape

\noindent with \upshape $c^{j} = C \left(A, \mu_{\max}, \frac{1}{h_{\min}}, \frac{1}{\mathfrak{a}_{\min}},\left\lvert b \right\rvert_{H^{N+2}} \right)$.\itshape
\end{thm}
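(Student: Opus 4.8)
The plan is to run the classical \emph{consistency $+$ well-posedness $+$ stability $\Rightarrow$ convergence} scheme (as in \cite{Lannes_ww}), gluing together the three pillars already in place: the local well-posedness of the water waves system (Theorem \ref{existence_ww}), the consistency of the Boussinesq-Coriolis model (Proposition \ref{constit_boussi}), and the well-posedness (Proposition \ref{existence_boussi}) together with the stability (Proposition \ref{stability_boussi}) of that model.

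First I would invoke Theorem \ref{existence_ww} with $N \geq 7$ to produce the unique classical solution $\left(\zeta, \Umuszero, \bm{\omega}\right)$ of \eqref{castro_lannes_formulation} on $[0,T_{WW}]$ with $T_{WW} = T_{0}/\max(\epsilon,\beta,\tfrac{\epsilon}{\text{Ro}})$; in the regime $\mathcal{A}_{\text{Bouss}}$ one has $\epsilon,\beta = \mathcal{O}(\mu)$, so $T_{WW} \gtrsim T_{0}/\max(\mu,\tfrac{\epsilon}{\text{Ro}})$, which yields (iii). From this solution I form the averaged quantities $(u,v)^{t} = \overline{\textbf{V}}$ and $\textbf{V}^{\sharp}$ of \eqref{V_average} and \eqref{u_diese}, and take their traces at $t=0$ as the initial data $\left(\zeta_{0},u_{0},v_{0},\textbf{V}^{\sharp}_{0}\right)$ for the Boussinesq-Coriolis system. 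Since the operators $\overline{\textbf{V}}$ and $\textbf{V}^{\sharp}$ map $H^{N}$ data into the energy space $X^{s}$ for any $s$ with a fixed derivative loss $s + c_{0} \leq N$, and since the non-vanishing depth and the size bound of Theorem \ref{existence_ww} are inherited by these data, the hypotheses of Proposition \ref{existence_boussi} are met. That proposition then gives the unique Boussinesq solution $\left(\zeta_{B},u_{B},v_{B},\textbf{V}^{\sharp}_{B}\right)$ of (ii) on $[0,T_{Bouss}]$ with $T_{Bouss} = T_{0}/\max(\mu,\tfrac{\epsilon}{\text{Ro}}\sqrt{\mu})$; because $\tfrac{\epsilon}{\text{Ro}} \leq 1$ one checks $T_{Bouss} \geq T_{WW}$, so I set $T = T_{WW}$, which is exactly the form required in (i).

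The heart of the argument is to combine consistency with stability. By Proposition \ref{constit_boussi}, the quadruple $\left(\zeta,u,v,\textbf{V}^{\sharp}\right)$ built from the water waves solution solves \eqref{boussinesq_eq} up to a residual $R=(R_{1},R_{2},R_{3},R_{4})$, and the crucial point is that $R = \mathcal{O}(\mu^{2})$ not merely pointwise but in the $X^{s}_{\mu}$ norm, uniformly on $[0,T]$. This is where $N \geq 7$ is used: the residual is assembled from the asymptotic expansions of Propositions \ref{equation_u}--\ref{udiese_eq} and Lemma \ref{udiese_T_eq}, each of whose remainders loses a fixed number of derivatives, and all of them are controlled in $H^{s}$ by the uniform $H^{N}$ (resp. $H^{N-1}$) bounds on $\left(\zeta,\Umuszero,\bm{\omega}\right)$ and their time derivatives furnished by Theorem \ref{existence_ww} and Remark \ref{control_remainder}. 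I then apply Proposition \ref{stability_boussi} with the \emph{exact} Boussinesq solution of (ii) playing the role of $\left(\zeta,u,v,\textbf{V}^{\sharp}\right)$ and the water-waves-derived approximate solution playing the role of $\left(\tilde{\zeta},\tilde{u},\tilde{v},\tilde{\textbf{V}}^{\sharp}\right)$. As the two share the same initial data, $\mathfrak{e}_{|t=0}=0$, and the stability estimate collapses to $\lver \mathfrak{e}(t) \rver_{X^{s-1}_{\mu}} \leq C\, t\, \lver R \rver_{X^{s}_{\mu}} \leq C\, t\, \mu^{2}$.

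Finally I would convert this Sobolev bound into the $L^{\infty}$ estimate of (iv) through the embedding $H^{s-1}(\R) \hookrightarrow L^{\infty}(\R)$, legitimate because $s > \tfrac{1}{2}+1$ forces $s-1 > \tfrac{1}{2}$. I expect the main obstacle to be precisely the uniform $X^{s}_{\mu}$ bound on the consistency residual: one must verify that every remainder produced in Section \ref{derive_boussi} survives differentiation up to order $s$ while retaining the gain $\mu^{2}$, which pins down how large $s$, hence $N$, must be taken and genuinely requires the time-derivative bounds of Remark \ref{control_remainder}. A secondary, purely bookkeeping point is reconciling the three a priori distinct time scales $T_{WW}$, $T_{Bouss}$ and the stability time, so that the final $T$ and all constants $c^{j}$ depend only on $\left(A,\mu_{\max},\tfrac{1}{h_{\min}},\tfrac{1}{\mathfrak{a}_{\min}},\lver b \rver_{H^{N+2}}\right)$.
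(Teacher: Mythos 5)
Your argument is correct and is precisely the route the paper intends: the paper gives no separate proof of Theorem \ref{boussi_water_wave_compare}, stating only that Proposition \ref{existence_boussi}, Proposition \ref{stability_boussi} and Theorem \ref{existence_ww} (together with the consistency result, Proposition \ref{constit_boussi}) combine to justify it, which is exactly your consistency $+$ well-posedness $+$ stability scheme with the water-waves-derived quantities inserted as the approximate solution in the stability estimate, $\mathfrak{e}_{|t=0}=0$, and the Sobolev embedding at the end. Your attention to the $X^{s}_{\mu}$ control of the residual via the uniform bounds of Theorem \ref{existence_ww} and Remark \ref{control_remainder}, and to the comparison of the time scales, matches the role of the hypothesis $\textbf{N} \geq 7$ and of the regime $\mathcal{A}_{\text{Bouss}}$ in the statement.
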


\noindent This theorem shows that the solutions of the water waves system \eqref{castro_lannes_formulation} remain close to the solutions of the Boussinesq-Coriolis equations \eqref{boussinesq_eq} over times $\mathcal{O} \left(\frac{1}{\max(\mu, \frac{\epsilon}{\text{Ro}})} \right)$ with an accuracy of order $\mathcal{O}(\mu)$. Hence, if one considers a system and wants to show that the solutions of this system remain close to the solutions of the waves equations over times $\mathcal{O} \left(\frac{1}{\max(\mu, \frac{\epsilon}{\text{Ro}})} \right)$ with an accuracy of order $\mathcal{O}(\mu)$, it is sufficient to compare the solutions of this system with the solutions of the Boussinesq-Coriolis equations \eqref{boussinesq_eq}. It is our approach in the following.

\section{Different asymptotic models in the Boussinesq regime over a flat bottom}\label{asympto_model}

\noindent The Boussinesq-Coriolis equations \eqref{boussinesq_eq} are particularly interesting for the evolution of offshore water waves. Without vorticity, we get the so-called Boussinesq equations. When we add a rotation, and in particular Coriolis effects, a standard assumption made by physicists is to also assume that the Rossby radius, or Obukhov radius, $\frac{\sqrt{gH}}{f}$ is greater than the typical length of the waves $L$ (see for instance \cite{pedlosky}, \cite{gill}, \cite{leblond}). Then, different regimes for the Coriolis parameter were considered depending on whether the rotation is weak or not (\cite{ostrovsky_original}, \cite{germain-renouard}, \cite{grimshaw_ostrov_surf}). In this paper, we consider three different regimes (noticed in \cite{germain-renouard}), a strong rotation ($\frac{\epsilon}{\text{Ro}} \leq 1$), weak rotation ($\frac{\epsilon}{\text{Ro}} = \mathcal{O} (\sqrt{\mu})$) and very weak rotation ($\frac{\epsilon}{\text{Ro}} = \mathcal{O} (\mu)$). We derive and fully justify  different asymptotic models when the bottom is flat : a linear equation admitting the so-called Poincar\'e waves \eqref{poincare_wave} ; the Ostrovsky equation \eqref{ostrov_eq}, which is a generalization of the KdV equation \eqref{kdv_eq} in presence of a Coriolis forcing, when the rotation is weak; and the KdV equation when the rotation is very weak.

\subsection{Strong rotation, the Poincar\'e waves}\label{model_poincare}

\noindent In this part we are interested in the behaviour of long water waves under a strong Coriolis forcing (in the sense of \cite{germain-renouard}). We suppose that $\frac{\epsilon}{\text{Ro}}$ is of order $1$. The asymptotic regime is 

\begin{equation}\label{poincare_regime}
\mathcal{A}_{\text{Poin}} = \left\{ \left( \epsilon, \beta, \gamma, \mu, \text{Ro} \right), 0 \leq \mu \leq \mu_{0}, \epsilon = \mu, \beta = \gamma = 0, \frac{\epsilon}{\text{Ro}} = 1 \right\}.
\end{equation}

\noindent Then, the Boussinesq-Coriolis equations \eqref{boussinesq_eq} become

\begin{equation}\label{boussinesq_eq_poin}
\left\{
\begin{aligned}
&\partial_{t} \zeta + \partial_{x} \left( \left(1+ \mu \zeta \right) u \right) = 0,\\
&\left(1- \frac{\mu}{3} \partial_{x}^{2} \right) \partial_{t} u + \partial_{x} \zeta + \mu u \partial_{x} u - v + \frac{ \mu^{\frac{3}{2}} }{24} \partial_{x}^{2} \frac{v^{\sharp}}{h} = 0,\\
&\partial_{t} v + \mu u \partial_{x} v +  u = 0,\\
&\partial_{t} \textbf{V}^{\sharp} + \mu \textbf{V}^{\sharp} \partial_{x} u + \mu u \partial_{x}  \textbf{V}^{\sharp} + \textbf{V}^{\sharp \perp} = 0.
\end{aligned}
\right.
\end{equation}

\noindent Our purpose is to justify the so-called Poincar\'e waves or Sverdrup waves (\cite{sverdrup}), which are inertia-gravity waves in the linear setting. Dropping all the terms of order $\mathcal{O} \left( \mu \right)$ in the Boussinesq-Coriolis equation, we get the linear system

\begin{equation}\label{boussines_linear}
\left\{
\begin{aligned}
&\partial_{t} \zeta + \partial_{x} u = 0,\\
&\partial_{t} u + \partial_{x} \zeta - v = 0,\\
&\partial_{t} v +  u = 0.\\
\end{aligned}
\right.
\end{equation}

\noindent Then, if we denote $U = \left(\zeta, u, v \right)^{t}$, by taking the Fourier transform, we get

\begin{equation*}
\partial_{t} \widehat{U} = \mathcal{A} \widehat{U}  \text{   with   }   \mathcal{A} = \begin{pmatrix}
0 & - i \xi & 0 \\ - i \xi & 0 & 1 \\ 0 & -1 & 0 \end{pmatrix}
\end{equation*}

\noindent and we obtain,

\begin{equation}\label{poincare_wave}
\widehat{U} = \mathcal{S}(t,\xi) \widehat{U}_{0} = \begin{pmatrix} \frac{\xi^2 \cos(\sqrt{\xi^2+1} t)+1}{\xi^2+1} & -i\xi \frac{\sin(\sqrt{\xi^2+1}t)}{\sqrt{\xi^2+1}} & i \xi \frac{\cos(\sqrt{\xi^2+1} t) -1}{\xi^2+1} \\ -i \xi \frac{\sin(\sqrt{\xi^2+1}t)}{\sqrt{\xi^2+1}} & \cos(\sqrt{\xi^2+1} t) & \frac{\sin(\sqrt{\xi^2+1} t)}{\sqrt{\xi^2+1}} \\ -i \xi \frac{\cos(\sqrt{\xi^2+1} t)-1}{\xi^2+1} & - \frac{\sin(\sqrt{\xi^2+1} t)}{\sqrt{\xi^2+1}} & \frac{\xi^2+\cos(\sqrt{\xi^2+1} t)}{\xi^2+1} \end{pmatrix} \widehat{U}_{0}. 
\end{equation}

\noindent Commonly, Poincar\'e waves are waves of the form

\begin{equation*}
U(t,x) = e^{i(x k \pm t \sqrt{k^{2} + 1})} U_{0}.
\end{equation*}

\noindent They are solutions of the Klein-Gordon equation. In this setting, Poincar\'e waves correspond to solutions of System \eqref{boussines_linear} of the form

\begin{equation*}
\widehat{U}(t,\xi) = e^{i \pm t \sqrt{\xi^{2} + 1}} \widehat{U}_{0}(\xi).
\end{equation*}

\noindent Therefore, a solution of System \eqref{boussines_linear} is a sum of two Poincar\'e waves if and only if

\begin{equation*}
\begin{pmatrix} \frac{1}{\xi^2+1} & 0 &  -\frac{i \xi}{\xi^2+1} \\ 0 & 0 & 0 \\ \frac{i \xi}{\xi^2+1} & 0 & \frac{\xi^2}{\xi^2+1} \end{pmatrix} \widehat{U}_{0} = 0, 
\end{equation*}

\noindent which is equivalent to

\begin{equation}\label{poincare_ci}
\zeta_{0} = \partial_{x} v_{0}.
\end{equation}

\noindent In the following, we denote by $\mathcal{S}(t)$ the semi-group of the linear Boussinesq-Coriolis equation. The end of this part is devoted to the full justification of Poincar\'e waves. The following lemma shows that Condition \eqref{poincare_ci} is propagated by the flow of System \eqref{boussines_linear}.

\begin{lemma}\label{propa_condition_poincare}
\noindent Let $(\zeta,u,v)$ be a solution of \eqref{boussines_linear} such that $(\zeta,u,v)_{|t=0} = 0$ satisfies Condition \eqref{poincare_ci}. Then, for all $t \in \R$,

\begin{equation*}
\zeta(t, \cdot) = \partial_{x} v(t, \cdot).
\end{equation*}
\end{lemma}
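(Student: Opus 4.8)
The statement to prove is Lemma \ref{propa_condition_poincare}: if $(\zeta,u,v)$ solves the linear system \eqref{boussines_linear} and the initial data satisfies $\zeta_0 = \partial_x v_0$, then $\zeta(t,\cdot) = \partial_x v(t,\cdot)$ for all $t$. Let me think about how I would approach this.
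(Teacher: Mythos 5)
Your proposal contains no argument at all: after restating the lemma you stop at ``Let me think about how I would approach this,'' so there is nothing to compare against the paper. The entire proof is missing.

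For the record, the required argument is short. Set $w = \zeta - \partial_{x} v$. From the first and third equations of \eqref{boussines_linear},
\begin{equation*}
\partial_{t} w = \partial_{t} \zeta - \partial_{x} \partial_{t} v = -\partial_{x} u - \partial_{x}(-u) = 0,
\end{equation*}
so $w$ is constant in time; Condition \eqref{poincare_ci} gives $w_{|t=0} = \zeta_{0} - \partial_{x} v_{0} = 0$, hence $\zeta(t,\cdot) = \partial_{x} v(t,\cdot)$ for all $t \in \R$. Equivalently, one can read the same conclusion off the explicit semigroup \eqref{poincare_wave}: the Fourier symbol of $\hat{\zeta} - i\xi \hat{v}$ is invariant under $\mathcal{S}(t,\xi)$, so the constraint propagates. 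Either route is acceptable, but you must actually carry one of them out.
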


\noindent We also have the following dispersion result (see for instance \cite{Wp_estim_klein_gordon} and \cite{Lp_estim_klein_gordon} or Corollary 7.2.4 in \cite{hormander_nonlinear}).

\begin{lemma}\label{disp_kl}
\noindent Let $u_{0} \in W^{2,1}(\R)$. Then

\begin{equation*}
\lver \int_{\R} e^{ix \xi \pm  t \sqrt{\xi^{2} + 1}} u_{0}(\xi) d \xi \rver_{L^{\infty}_{x}} \leq \frac{C}{\sqrt{1+ |t|}} \lver u_{0} \rver_{W^{2,1}}.
\end{equation*}

\end{lemma}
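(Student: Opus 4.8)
The plan is to recognize the integral as the one-dimensional Klein--Gordon propagator and to prove the decay by stationary phase. First I would dispose of small times: for $|t|\le 1$ one has the trivial bound by $\lver u_0\rver_{L^1}\le \lver u_0\rver_{W^{2,1}}$, while $(1+|t|)^{-1/2}\ge 2^{-1/2}$, so the inequality holds with an absolute constant. By conjugating and replacing $x$ by $-x$ it then suffices to treat the $+$ sign and $t\ge 1$, the other sign and the sign of $t$ following from the same symmetries.

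Set $I(t,x)=\int_{\R}e^{i\phi(\xi)}u_0(\xi)\,d\xi$ with $\phi(\xi)=x\xi+t\langle\xi\rangle$ and $\langle\xi\rangle=\sqrt{1+\xi^2}$. The computation I would rely on is $\phi'(\xi)=x+t\,\xi\langle\xi\rangle^{-1}$ and $\phi''(\xi)=t\langle\xi\rangle^{-3}$. Since the group velocity $\xi\langle\xi\rangle^{-1}$ sweeps out $(-1,1)$, a stationary point $\xi_\ast$ (solving $\xi_\ast\langle\xi_\ast\rangle^{-1}=-x/t$) exists exactly inside the light cone $|x|<t$, and there $\phi''$ never vanishes; this non-degeneracy is what produces the $t^{-1/2}$ rate.

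I would then organize the estimate along a dyadic frequency decomposition $u_0=\sum_{j\ge 0}\chi_j u_0$, where $\chi_j$ is supported in $\{|\xi|\sim 2^{j}\}$, so that $|\phi''|\sim t\,2^{-3j}$ on the $j$-th block. On a block not containing $\xi_\ast$ the phase is non-stationary and I would integrate by parts twice with the operator $L=(i\phi')^{-1}\partial_\xi$, each step costing one $\xi$-derivative of the amplitude and gaining a factor $|\phi'|^{-1}$; on the block containing $\xi_\ast$ I would instead apply the van der Corput lemma with $\lambda\sim|\phi''|\sim t\langle\xi_\ast\rangle^{-3}$, giving a contribution of order $\lambda^{-1/2}\sim t^{-1/2}\langle\xi_\ast\rangle^{3/2}$ times the local size of $u_0$. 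Summing the two types of contributions over $j$ yields the claimed single rate $t^{-1/2}$.

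The main obstacle is precisely the degeneracy of the Hessian $\phi''=t\langle\xi\rangle^{-3}$ as $|\xi|\to\infty$, equivalently the loss of dispersion as $x$ approaches the light cone and $\xi_\ast$ is driven to high frequency, which is what forces a loss of derivatives. The two derivatives in the $W^{2,1}$ norm are exactly what is spent to control this: they are consumed by the double integration by parts in the non-stationary regions and, simultaneously, the decay they impose on $u_0$ at high frequency renders the dyadic sum of the van der Corput pieces (with their growing weights $\langle\xi_\ast\rangle^{3/2}$) convergent and uniform in $x$. Since $x\xi+t\langle\xi\rangle$ is the Klein--Gordon dispersion relation, one may alternatively quote the estimate directly from Corollary 7.2.4 in \cite{hormander_nonlinear} (or from \cite{Wp_estim_klein_gordon}, \cite{Lp_estim_klein_gordon}).
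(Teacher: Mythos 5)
Your reduction to $t\ge 1$ and the $+$ sign is fine, and you correctly identify the degenerate Hessian $\phi''(\xi)=t(1+\xi^{2})^{-3/2}$ as the main enemy, but the way you propose to beat it cannot work under your reading of the statement. You take $u_{0}$ itself to be the amplitude of the oscillatory integral and measure it in $W^{2,1}$ in the $\xi$-variable, and you assert that these two derivatives impose enough high-frequency decay on $u_{0}$ to absorb the van der Corput weight $\langle\xi_{*}\rangle^{3/2}$ on the stationary block. They impose no quantitative decay at all: $u_{0}\in W^{2,1}(\R)$ only gives $u_{0}(\xi)\to 0$ without a rate, and the $W^{2,1}$ norm is invariant under translations in $\xi$, so the growing weights cannot be summed. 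This is not a repairable technicality, because the statement you are actually proving is false: take $t$ large, let $u_{0}$ be a fixed smooth bump of height and width $1$ centered at $\xi_{*}=t$, and choose $x=-t\xi_{*}/\sqrt{1+\xi_{*}^{2}}$ so that $\phi(\xi)=x\xi+t\sqrt{1+\xi^{2}}$ is stationary at $\xi_{*}$. On the support of $u_{0}$ one has $|\phi''|\sim t\,\xi_{*}^{-3}=t^{-2}$, hence the total phase variation is $O(t^{-2})$ and $\left|\int e^{i\phi}u_{0}\right|\ge\frac{1}{2}\int u_{0}>0$ uniformly in $t$, while the claimed bound $C(1+|t|)^{-1/2}\left|u_{0}\right|_{W^{2,1}}$ tends to $0$.

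The lemma is meant, and is used in the paper (the semi-group $\mathcal{S}(t-\tau)$ is applied to physical-space functions whose $W^{2,1}$ norms appear in the bounds for $I_{1}$ and $I_{2}$), as the classical Klein--Gordon $L^{1}\to L^{\infty}$ estimate: the amplitude in the integral is the Fourier transform $\widehat{u_{0}}$, while the $W^{2,1}$ norm on the right is that of $u_{0}$ on the physical side. That statement is true and is exactly what the paper cites (Corollary 7.2.4 in \cite{hormander_nonlinear}; the paper offers no self-contained proof). If you want to prove it rather than quote it, the distribution of labour must change: $\xi$-derivatives of $\widehat{u_{0}}$ are moments of $u_{0}$ and are not controlled by $W^{2,1}$, so your integrations by parts on the amplitude are not available. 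The standard route is a dyadic decomposition with smooth cutoffs $\chi_{j}$, the kernel bound $\sup_{x}\left|\int e^{i(x\xi\pm t\sqrt{1+\xi^{2}})}\chi_{j}(\xi)\,d\xi\right|\lesssim t^{-1/2}2^{3j/2}$ obtained from van der Corput with $\chi_{j}$ as amplitude, then Young's inequality together with $\left|P_{j}u_{0}\right|_{L^{1}}\lesssim 2^{-2j}\left|u_{0}\right|_{W^{2,1}}$ for $j\ge 1$: the two derivatives are spent there, and the factor $2^{-2j}$ against the weight $2^{3j/2}$ is what makes the sum converge to the uniform rate $t^{-1/2}$. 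With that correction your outline (trivial small times, symmetry, dyadic splitting, van der Corput at the degenerate stationary point) becomes the standard proof; your fallback of quoting H\"ormander is legitimate, but only after the roles of $u_{0}$ and $\widehat{u_{0}}$ in the statement are straightened out.
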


\noindent We can give the main result of this part.

\begin{thm}
\noindent \noindent Let $\mu_{0} > 0$, \upshape $\zeta_{0}, u_{0}, v_{0}, \textbf{V}^{\sharp}_{0} \in H^{6}(\R)$\itshape, $x\zeta_{0}, xu_{0}, xv_{0} \in H^{4}(\R)$, such that  \upshape $\zeta_{0},v_{0}$ \itshape satisfy Condition \eqref{poincare_ci}, $1+\epsilon \zeta \geq h_{\min} > 0$ and \upshape$0 < \mu < \mu_{0}$ \itshape. Then, there exists a time $T > 0$, such that  there exists

\medskip

\noindent (i) a unique classical solution \upshape$\left( \zeta_{B}, u_{B}, v_{B}, \textbf{V}^{\sharp}_{B} \right)$ \itshape of \eqref{boussinesq_eq_poin} with initial data \upshape$\left(\zeta_{0}, u_{0}, v_{0}, \textbf{V}^{\sharp}_{0} \right)$ \itshape on $\left[ 0, \frac{T}{\sqrt{\mu}} \right]$.

\medskip

\noindent (ii)  a unique solution \upshape$\left(\zeta, u, v \right)$ \itshape of \eqref{boussines_linear} with initial data \upshape$\left(\zeta_{0}, u_{0}, v_{0} \right)$ \itshape on $\left[ 0, \frac{T}{\sqrt{\mu}} \right]$.

\medskip 

\noindent Moreover, we have the following error estimate for all $0 \leq t \leq \frac{T}{\sqrt{\mu}}$,

\upshape
\begin{equation*}
\lver \left(\zeta_{B},u_{B}, v_{B} \right) - \left(\zeta,u, v \right) \rver_{L^{\infty}([0,t] \times \R)} \leq C \left( \frac{\mu t}{1+ \sqrt{t}} + \mu^{2} t^{2} + \mu^{\frac{3}{2}} t \right) \leq C \mu^{\frac{3}{4}}.
\end{equation*}
\itshape

\noindent where \upshape $C = C\left(T, \frac{1}{h_{\min}}, \mu_{0}, \lver \zeta_{0} \rver_{H^{6}}, \lver u_{0} \rver_{H^{6}}, \lver v_{0} \rver_{H^{6}}, \lver \textbf{V}^{\sharp}_{0} \rver_{H^{6}}, \lver x \zeta_{0} \rver_{H^{4}} , \lver x u_{0} \rver_{H^{4}}, \lver x v_{0} \rver_{H^{4}} \right)$ \itshape.

\end{thm}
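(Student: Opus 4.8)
The plan is to compare the two flows through the explicit semigroup $\mathcal{S}$ of \eqref{boussines_linear} and to extract the decisive gain from the dispersive decay of the Poincaré waves. First I would secure existence and uniform bounds on the common interval $\left[0,\frac{T}{\sqrt{\mu}}\right]$. In the regime \eqref{poincare_regime} one has $\epsilon=\mu$ and $\frac{\epsilon}{\text{Ro}}=1$, hence $\max\left(\mu,\frac{\epsilon}{\text{Ro}}\sqrt{\mu}\right)=\sqrt{\mu}$, so Proposition \ref{existence_boussi} yields the nonlinear solution $\left(\zeta_{B},u_{B},v_{B},\textbf{V}^{\sharp}_{B}\right)$ of \eqref{boussinesq_eq_poin} on $\left[0,\frac{T_{0}}{\sqrt{\mu}}\right]$, with $\zeta_{B},u_{B},v_{B},\frac{\textbf{V}^{\sharp}_{B}}{h_{B}}$ and $\partial_{t}u_{B}$ bounded uniformly in $\mu$ in high Sobolev norm. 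The linear solution $\left(\zeta,u,v\right)=\mathcal{S}(t)\left(\zeta_{0},u_{0},v_{0}\right)$ exists globally; by Lemma \ref{propa_condition_poincare} the constraint \eqref{poincare_ci} is propagated, so it carries no geostrophic (zero) mode and is a genuine superposition of the branches $e^{\pm it\sqrt{\xi^{2}+1}}$. I would propagate in parallel the weighted data $x\zeta_{0},xu_{0},xv_{0}\in H^{4}$ (allowing the usual linear growth $\lesssim\langle t\rangle$ from $x\,e^{it\sqrt{-\partial_{x}^{2}+1}}$), which makes Lemma \ref{disp_kl} applicable through $\lver\,\cdot\,\rver_{W^{2,1}}\lesssim\lver\,\cdot\,\rver_{H^{4}}+\lver x\,\cdot\,\rver_{H^{4}}$.

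Next I would set up a Duhamel identity for the error. Moving the dispersive term $\frac{\mu}{3}\partial_{x}^{2}\partial_{t}u_{B}$ to the right-hand side and collecting the quadratic terms, \eqref{boussinesq_eq_poin} reads
\begin{equation*}
\partial_{t}U_{B}=\mathcal{A}U_{B}+\mu\,G(U_{B})+\mu^{\frac{3}{2}}H,\qquad U_{B}=\left(\zeta_{B},u_{B},v_{B}\right)^{t},
\end{equation*}
where $G$ gathers $\partial_{x}(\zeta_{B}u_{B})$, $u_{B}\partial_{x}u_{B}$, $u_{B}\partial_{x}v_{B}$ and $\tfrac13\partial_{x}^{2}\partial_{t}u_{B}$, while $H=-\frac{1}{24}\partial_{x}^{2}\frac{v^{\sharp}_{B}}{h_{B}}$ is the vorticity contribution, all bounded uniformly by the previous step. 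Since $\partial_{t}U=\mathcal{A}U$ exactly, Duhamel's formula gives
\begin{equation*}
U_{B}(t)-U(t)=\mu\int_{0}^{t}\mathcal{S}(t-s)G(U_{B}(s))\,ds+\mu^{\frac{3}{2}}\int_{0}^{t}\mathcal{S}(t-s)H(s)\,ds.
\end{equation*}

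Then I would estimate the three contributions to $\lver U_{B}-U\rver_{L^{\infty}}$ by splitting $G(U_{B})=G(U)+\left(G(U_{B})-G(U)\right)$. On the leading quadratic part $G(U)$ I apply the dispersive decay of $\mathcal{S}$ (Lemma \ref{disp_kl}), which produces a factor $(1+(t-s))^{-1/2}$; controlling $\lver G(U(s))\rver_{W^{2,1}}$ by the Sobolev and weighted bounds (and the algebra property of $H^{s}$) and integrating in $s$ yields the first term $\frac{\mu t}{1+\sqrt{t}}$. The Lipschitz remainder satisfies $\lver G(U_{B})-G(U)\rver\lesssim\lver U_{B}-U\rver$, so feeding the a priori size of the error back into the integral and closing by a Gronwall/bootstrap argument produces the self-interaction term $\mu^{2}t^{2}$. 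Finally the vorticity integral is bounded by $\mu^{\frac{3}{2}}\int_{0}^{t}\lver H(s)\rver\,ds\lesssim\mu^{\frac{3}{2}}t$. Summing and using $t\le\frac{T}{\sqrt{\mu}}$, where $\frac{\mu t}{1+\sqrt{t}}\lesssim\mu^{\frac34}$, $\mu^{2}t^{2}\lesssim\mu$ and $\mu^{\frac32}t\lesssim\mu$, gives the announced bound $C\mu^{\frac34}$.

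I expect the main obstacle to be the dispersive control of the quadratic nonlinearity in $L^{\infty}$: one must reconcile the decay $(t-s)^{-1/2}$ gained from $\mathcal{S}$ with the (at most linear in $s$) growth of the $W^{2,1}$ norm of the source, which is exactly why the $x$-moment hypotheses on the data are needed. Equally essential is the absence of a geostrophic component in the leading solution, guaranteed by \eqref{poincare_ci} and Lemma \ref{propa_condition_poincare}: the zero mode of $\mathcal{A}$ does not disperse, so without this cancellation the first integral would only be $O(\mu t)=O(\sqrt{\mu})$ on $\left[0,\frac{T}{\sqrt{\mu}}\right]$, which is too weak; the non-resonance of the zero mode with a product of two Poincaré waves must also be invoked to keep its projection of $G(U)$ harmless. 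Controlling $\frac{v^{\sharp}_{B}}{h_{B}}$ uniformly rather than $v^{\sharp}_{B}$ in the vorticity term, as in Remark \ref{term_vort_strange_explanation}, is the remaining technical point.
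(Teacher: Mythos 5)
Your plan coincides with the paper's proof in every essential point: existence and uniform bounds from Proposition \ref{existence_boussi} on $\left[0,\frac{T_{0}}{\sqrt{\mu}}\right]$, a Duhamel representation through the explicit semigroup $\mathcal{S}$ of \eqref{boussines_linear}, the Klein--Gordon decay of Lemma \ref{disp_kl} combined with the weighted hypotheses to control the $W^{2,1}$ norms of the sources, the propagated condition \eqref{poincare_ci} (Lemma \ref{propa_condition_poincare}) used precisely to cancel the non-dispersive geostrophic component of the quadratic source (in the paper this is the identity $\partial_{x}\left(\zeta u\right)=\partial_{x}\left(u\partial_{x}v\right)$), and a crude $\mathcal{O}(\mu^{\frac{3}{2}}t)$ bound for the vorticity term involving $\partial_{x}^{2}\frac{v^{\sharp}_{B}}{h}$. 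The only divergence is how the error estimate is closed: you split $G(U_{B})=G(U)+\left(G(U_{B})-G(U)\right)$ and invoke a Lipschitz bound plus Gronwall, whereas the paper applies Duhamel a second time inside the quadratic terms and bounds the resulting $\mu^{2}$ double integral by $Ct^{2}$ using only the uniform a priori bounds on $\left(\zeta_{B},u_{B},v_{B},\textbf{V}^{\sharp}_{B}\right)$ and $\partial_{t}u_{B}$. Your variant is essentially equivalent (Gronwall even absorbs the $\mu^{2}t^{2}$ term since $e^{C\mu t}$ stays bounded for $t\leq\frac{T}{\sqrt{\mu}}$), but as written the claim $\lver G(U_{B})-G(U)\rver\lesssim\lver U_{B}-U\rver$ does not cover the term $\frac{1}{3}\partial_{x}^{2}\partial_{t}\left(u_{B}-u\right)$ that you lumped into $G$: it loses two spatial derivatives and one time derivative relative to the norm in which you would run the bootstrap. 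The paper's second Duhamel iteration is designed exactly to avoid this, since the $\partial_{x}^{2}\partial_{\tau}$ term is then evaluated on the free evolution $\mathcal{S}_{2}(\tau)U_{0}$ (the $I_{2}$ contribution, which is where the weighted data enter), and all remaining corrections carry a factor $\mu^{2}$ and are estimated through Proposition \ref{existence_boussi} rather than through the error. If you either mimic that substitution for this term, or run your Gronwall argument in the $X^{s}_{\mu}$-type norms (or appeal to the stability Proposition \ref{stability_boussi}), your argument closes and gives the stated bound $C\left(\frac{\mu t}{1+\sqrt{t}}+\mu^{2}t^{2}+\mu^{\frac{3}{2}}t\right)\leq C\mu^{\frac{3}{4}}$.
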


\begin{remark}
\noindent By standard energy estimates, we easily get that, for all $0 \leq t \leq \frac{T}{\sqrt{\mu}}$,

\upshape
\begin{align*}
\lver \left(\zeta_{B},u_{B}, v_{B} \right) - \left(\zeta,u, v \right) \rver_{L^{\infty}([0,t] \times \R)} \leq C \mu t \leq C \sqrt{\mu},
\end{align*}
\itshape

\noindent where $C$ is as in the previous theorem. Therefore, our result is not a simple energy estimate. We use the dispersive effects due to the Coriolis forcing to be more accurate.
\end{remark}

\begin{proof}
\noindent The first point follows from Proposition \ref{existence_boussi}. For the error estimate, if we denote by $U = \left(\zeta_{B},u_{B}, v_{B} \right)^{t}$, $U$ satisfies the linear Boussinesq-Coriolis equation up to a remainder of order $\mu$ and a remainder of order $\mu^{\frac{3}{2}}$. Then, using the Duhamel's formula we get

\begin{equation*}
U(t) = \mathcal{S}(t) U_{0} + \mu \int_{0}^{t} \mathcal{S}(t-\tau) \begin{pmatrix} - \partial_{x} \left( \zeta_{B} u_{B} \right)(\tau) \\ - u_{B}(\tau) \partial_{x} u_{B}(\tau) + \frac{1}{3} \partial_{x}^{2} \partial_{\tau} u_{B}(\tau) \\ - u_{B} \partial_{x} v_{B} \end{pmatrix} + \mu^{\frac{3}{2}} \int_{0}^{t} \hspace{-0.3cm} \mathcal{S}(t-\tau) R
\end{equation*}

\noindent where $R$ is a remainder bounded uniformly with respect to $\mu$. Then, using again the Duhamel's formula on the first integral we get

\begin{equation*}
\begin{aligned}
U(t) = \mathcal{S}(t) U_{0} &- \mu \int_{0}^{t} \mathcal{S}(t-\tau) \begin{pmatrix} \partial_{x} \left( (\mathcal{S}_{1}(\tau) U_{0}) (\mathcal{S}_{2}(\tau) U_{0}) \right) \\ (\mathcal{S}_{2}(\tau) U_{0}) \partial_{x} (\mathcal{S}_{2}(\tau) U_{0})  \\ (\mathcal{S}_{2}(\tau) U_{0}) \partial_{x} ( \mathcal{S}_{3}(\tau) U_{0} ) \end{pmatrix} \\
& + \mu \int_{0}^{t} \mathcal{S}(t-\tau) \begin{pmatrix} 0 \\  \frac{1}{3} \partial_{x}^{2} \partial_{\tau} \mathcal{S}_{2}(\tau) U_{0} \\ 0 \end{pmatrix} + \mu^{2} \int_{0}^{t} \int_{0}^{\tau} \tilde{R} + \mu^{\frac{3}{2}} \hspace{-0.1cm} \int_{0}^{t} \hspace{-0.2cm} \mathcal{S}(t-\tau) \tilde{R}\\
&\hspace{-1.6cm} = \mathcal{S}(t) U_{0} - \mu I_{1}(t) + \mu I_{2}(t) + \mu^{2} I_{3}(t) + \mu^{\frac{3}{2}} I_{4}(t),
\end{aligned}
\end{equation*}

\noindent where $\mathcal{S}_{i}(t)$ is the $i$th row of $\mathcal{S}(t)$. We start by estimating $I_{1}$. We have

\begin{equation*}
I_{1}(t) =\int_{0}^{t} \mathcal{S}(t-\tau) \begin{pmatrix} \partial_{x} \left( \zeta(\tau) u(\tau) \right) \\ u(\tau) \partial_{x} u(\tau)  \\  u(\tau) \partial_{x} v(\tau) \end{pmatrix}.
\end{equation*}

\noindent Then, we notice that $\partial_{x} \left( \zeta(\tau) u(\tau) \right) = \partial_{x} \left(u(\tau) \partial_{x} v(\tau)\right)$ since $\zeta(\tau) = \partial_{x} v(\tau)$ by Lemma \ref{propa_condition_poincare}. Therefore, using Lemma \ref{disp_kl} and products estimates, we get

\begin{align*}
\lver I_{1}(t) \rver_{L^{\infty}} &\leq \int_{0}^{t} \frac{1}{\sqrt{1+t-\tau}} \lver  \begin{pmatrix} \partial_{x} \left( (\mathcal{S}_{1}(\tau) U_{0}) (\mathcal{S}_{2}(\tau) U_{0}) \right) \\ (\mathcal{S}_{2}(\tau) U_{0}) \partial_{x} (\mathcal{S}_{2}(\tau) U_{0})  \\ (\mathcal{S}_{2}(\tau) U_{0}) \partial_{x} ( \mathcal{S}_{3}(\tau) U_{0} ) \end{pmatrix} \rver_{W^{2,1}}\\
&\leq C \left( \lver \zeta_{0} \rver_{H^{3}} , \lver u_{0} \rver_{H^{3}}, \lver v_{0} \rver_{H^{3}}, \lver \textbf{V}^{\sharp}_{0} \rver_{H^{3}} \right) \frac{t}{\sqrt{1+t}}.
\end{align*}

\noindent For $I_{2}$, using Lemma \ref{disp_kl} we get

\begin{equation*}
\lver I_{2} \rver \leq C \left( \lver \zeta_{0} \rver_{H^{4}} , \lver u_{0} \rver_{H^{4}}, \lver v_{0} \rver_{H^{4}}, \lver x \zeta_{0} \rver_{H^{4}} , \lver x u_{0} \rver_{H^{4}}, \lver x v_{0} \rver_{H^{4}} \right) \frac{t}{\sqrt{1+t}}.
\end{equation*}

\noindent Finally, using Proposition \ref{existence_boussi}, we have

\begin{equation*}
\begin{aligned}
&\lver I_{3}(t) \rver_{H^{1}} \leq C \left( \lver \zeta_{0} \rver_{H^{6}} , \lver u_{0} \rver_{H^{6}}, \lver v_{0} \rver_{H^{6}}, \lver \textbf{V}^{\sharp}_{0} \rver_{H^{6}} \right) t^{2}\\
&\lver I_{4}(t) \rver_{H^{1}} \leq C \left( \lver \zeta_{0} \rver_{H^{4}} , \lver u_{0} \rver_{H^{4}}, \lver v_{0} \rver_{H^{4}}, \lver \textbf{V}^{\sharp}_{0} \rver_{H^{4}} \right) t.
\end{aligned}
\end{equation*}

\noindent Gathering these four estimates, we get the result.

\end{proof}

\noindent Hence, using Theorem \ref{boussi_water_wave_compare}, we justify that poincar\'e waves remain close to the solutions of the water waves equations \eqref{castro_lannes_formulation} over times $\mathcal{O}_{\mu} \left(1 \right)$ with an accuracy of order $\mathcal{O} \left( \mu \right)$. Furthermore, if one can show that a solution of the water waves equations \eqref{castro_lannes_formulation}, with initial data satisfying Condition \eqref{poincare_ci}, exists over a time $\mathcal{O} \left(\frac{1}{\sqrt{\mu}} \right)$, we show that this solution remains close, with an accuracy of order $\mathcal{O} \left( \mu^{\frac{3}{4}} \right)$, to the solution of the linear Boussinesq-Coriolis equations with the same initial data. The reader interested in more linear properties of the water waves equations in shallow water can refer to Chapter 4 in \cite{Majda_ocean}.

\subsection{Weak rotation, the Ostrovsky equation}\label{model_ostrov}

\noindent Without Coriolis forcing and vorticity, it is well-known, that the KdV equation is a good approximation of the water waves equation under the assumption that $\epsilon$ and $\mu$ have the same order (\cite{craig_boussinesq}, \cite{kano_nishida}, \cite{schneider_wayne_longwave}, \cite{bona_colin_lannes}, Part 7.1 in \cite{Lannes_ww}). When the Coriolis forcing is taken into account, Ostrovsky (\cite{ostrovsky_original}) derived an equation for long waves, which is an adaptation of the KdV equation,

\begin{equation}\label{ostrov_eq}
\partial_{\xi} \left( \partial_{\tau} k +  \frac{3}{2} k \partial_{\xi} k + \frac{1}{6} \partial_{\xi}^{3} k \right) = \frac{1}{2} k.
\end{equation}

\noindent This equation is called the Ostrovsky equation or rKdV-equation in the physical literature. Initially developed for internal water waves, several authors also studied it for surface water waves (\cite{ostrovsky_surf}, \cite{germain-renouard}, \cite{leonov}, \cite{grimshaw_ostrov_surf}). The purpose of this part is to fully justify it. Inspired by \cite{germain-renouard} we consider the asymptotic regime

\begin{equation}\label{ostrov_regime}
\mathcal{A}_{\text{Ost}} = \left\{ \left( \epsilon, \beta, \gamma, \mu, \text{Ro} \right), 0 \leq \mu \leq \mu_{0}, \epsilon = \mu, \beta = \gamma = 0, \frac{\epsilon}{\text{Ro}} = \sqrt{\mu} \right\}.
\end{equation}

\noindent Then, the Boussinesq-Coriolis equations become (see Remark \ref{weak_rot_boussi})

\begin{equation}\label{boussi_weakrot}
\left\{
\begin{aligned}
&\partial_{t} \zeta + \partial_{x} \left( [1+\mu \zeta] u  \right) = 0,\\
&\left(1- \frac{\mu}{3} \partial_{x}^{2} \right) \partial_{t} u + \partial_{x} \zeta + \mu u \partial_{x} u - \sqrt{\mu} v = 0,\\
&\partial_{t} v + \mu u \partial_{x} v + \sqrt{\mu} u = 0.
\end{aligned}
\right.
\end{equation}

\noindent In order to motivate our approach, let us recall that we are interested in the one-dimensional propagation of water waves in the long wave regime. If we drop all the terms of order $\mathcal{O}(\sqrt{\mu})$ in the Boussinesq-Coriolis, we obtain that

\begin{equation*}
\left\{
\begin{aligned}
&\partial_{t} \zeta + \partial_{x} u = 0,\\
&\partial_{t} u + \partial_{x} \zeta = 0,\\
&\partial_{t} v = 0.
\end{aligned}
\right.
\end{equation*}

\noindent Hence, if we assume that $v$ is initially zero, we get a wave equation and propagation of traveling water waves with speed $\pm 1$. Therefore, it is natural to study how these  traveling water waves are perturbed when we add weakly nonlinear effects, i.e when we consider the System \eqref{boussi_weakrot}. In this paper, we consider only water waves with speed $1$. We consider a WKB expansion for $\left(\zeta,u,v\right)$. We seek an approximate solution $\left(\zeta_{app}, u_{app}, v_{app} \right)$ of \eqref{boussi_weakrot} under the form

\begin{equation}\label{ansatz_ostrov}
\begin{aligned}
&\zeta_{app}(t,x) = k(x-t,\mu t) + \mu \zeta_{(1)}(t,x,\mu t),\\
&u_{app}(t,x) = k(x-t,\mu t) + \mu u_{(1)}(t,x,\mu t),\\
&v_{app}(t,x) = \sqrt{\mu} v_{(1/2)}(t,x,\mu t).
\end{aligned}
\end{equation}

\noindent where $k = k(\xi,\tau)$ is our modulated traveling water waves, and the others terms are correctors. Then, we plug the ansatz in Sytem \eqref{boussi_weakrot} and we get

\begin{equation}\label{boussi_weak_app}
\begin{aligned}
&\partial_{t} \zeta_{app} + \partial_{x} \left( [1+\mu \zeta_{app}] u_{app}  \right) = \mu R_{(1)}^{1} + \mu^{2} R_{1},\\
&\left(1- \frac{\mu}{3} \partial_{x}^{2} \right) \partial_{t} u_{app} + \partial_{x} \zeta_{app} + \mu u_{app} \partial_{x} u_{app} - \sqrt{\mu} v_{app} = \mu R_{(1)}^{2} + \mu^{2} R_{2},\\
&\partial_{t} v_{app} + \mu u_{app} \partial_{x} v_{app} + \sqrt{\mu} u_{app} = \sqrt{\mu} R_{(1/2)}^{3} + \mu^{\frac{3}{2}} R_{3},
\end{aligned}
\end{equation}

\noindent where

\begin{equation*}
\begin{aligned}
&R_{(1)}^{1} = \partial_{t} \zeta_{(1)} + \partial_{x} u_{(1)} + \partial_{\tau} k + 2 k \partial_{\xi} k,\\
&R_{(1)}^{2} = \partial_{t} u_{(1)} + \partial_{x} \zeta_{(1)} + \partial_{\tau} k +  \frac{1}{3} \partial^{3}_{\xi}  k + k \partial_{\xi} k - v_{(1/2)},\\
&R_{(1/2)}^{3} = \partial_{t} v_{(1/2)} + k,
\end{aligned}
\end{equation*}

\noindent and 

\begin{equation}\label{remainder}
\begin{aligned}
&R_{1} = \partial_{\tau} \zeta_{(1)}  + \partial_{x} \left(k u_{(1)} + k \zeta_{(1)} + \mu \zeta_{(1)} u_{(1)} \right),\\
&R_{2} = \partial_{\tau} u_{(1)} - \frac{1}{3} \partial_{\xi}^{3} \partial_{\tau} k - \frac{1}{3} \partial_{x}^{3} \partial_{t} u_{(1)}  - \mu \frac{1}{3} \partial_{x}^{3} \partial_{\tau} u_{(1)} + \partial_{x} \left(k u_{(1)} \right) + \mu u_{(1)} \partial_{x} u_{(1)},\\
&R_{3} = \partial_{\tau} v_{(1/2)} + \left(k + \sqrt{\mu} u_{(1)} \right) \partial_{x} v_{(1/2)} + u_{(1)}.
\end{aligned}
\end{equation}

\noindent Then, the idea is to choose the correctors with $R_{(1)}^{1}(t,x,\tau) = R_{(1)}^{2}(t,x,\tau) = 0$ and $R_{(1/2)}^{3}(t,x,\tau) = 0$ for all $x \in \R$, $t \in \left[0, \frac{T}{\mu} \right]$ and $\tau \in \left[0,T \right]$.

\begin{remark}
\noindent In fact, we should add $\sqrt{\mu} \zeta_{(1/2)}(t,x,\mu t)$, $\sqrt{\mu} u_{(1/2)}(t,x,\mu t)$, $v_{(0)}(t,x,\mu t)$, and $\mu v_{(1)}(t,x,\mu t)$ to the ansatz \eqref{ansatz_ostrov} for $\zeta_{app}$, $u_{app}$, $v_{app}$ and $v_{app}$ respectively. However, if we plug them in System \eqref{boussi_weakrot} and we want to cancel all the terms of order $\sqrt{\mu}$ and $\mu$, we get 

\begin{equation*}
\begin{aligned}
&\partial_{t} \zeta_{(1/2)} + \partial_{x} u_{(1/2)} = 0,\\
&\partial_{t} u_{(1/2)} + \partial_{x} \zeta_{(1/2)} + v_{(0)} = 0,\\
&\partial_{t} v_{(0)} = 0,\\
&\partial_{t} v_{(1)} + \partial_{\tau} v_{(0)} + k \partial_{x} v_{(0)} + u_{(1/2)} = 0,
\end{aligned}
\end{equation*}

\noindent which leads to $\zeta_{(1/2)} = u_{(1/2)} = v_{(0)} = v_{(1)} = 0$ if these quantities are initially zero. Hence, we make this assumption in the following.
\end{remark}

\noindent Then, if we assume that $v_{(1/2)}$ and $k$ vanish at $x=\infty$, the condition $R_{(1/2)}^{3} = 0$ is equivalent to the equation

\begin{equation*}
\partial_{t} \partial_{x} v_{(1/2)}(t,x,\tau) + \partial_{\xi} k(x-t,\tau) = 0.
\end{equation*}

\noindent Since, $\partial_{t} (k(x-t,\tau)) = - \partial_{\xi} k(x-t,\tau)$, we can take

\begin{equation}\label{eq_v_f}
\partial_{x} v_{(1/2)}(t,x,\tau) = \partial_{x} v^{0}_{(1/2)}(x) - k^{0}(x) + k(x-t,\tau),
\end{equation}

\noindent where $v^{0}_{(1/2)}$ and $k^{0}$ are the initial data of $v_{(1/2)}$ and $k$ respectively. Then, we have to introduce the following spaces.

\begin{definition}\label{partialx_Hs}
\noindent For $s \in \R$, we define the Hilbert spaces $\partial_{x} H^{s}(\R)$ as

\begin{equation*}
\partial_{x} H^{s}(\R) =  \left\{ k \in H^{s-1}(\R) \text{,  } k = \partial_{x} \tilde{k} \text{  with  } \tilde{k} \in H^{s}(\R)\right\},
\end{equation*}

\noindent and $\tilde{k}$ is denoted $\partial_{x}^{-1} k$ in the following. In the same way, we define $\partial_{x}^{2} H^{s}(\R)$. 
\end{definition}

\noindent Then, if we assume that $k(\cdot,\tau) \in \partial_{x} H^{s}(\R)$ for all $\tau \in [0,T]$, we have 

\begin{equation*}
v_{(1/2)}(t,x,\tau) = v^{0}_{(1/2)}(x) - \partial_{x}^{-1} k^{0}(x) + \partial_{x}^{-1} k(x-t,\tau),
\end{equation*}

\noindent Furthermore, from $R_{(1)}^{1} = R_{(1)}^{2} = 0$, if we denote $w_{\pm} = \zeta_{(1)} \pm u_{(1)}$ we get

\begin{equation}\label{wave_ostrov}
\begin{aligned}
&\left(\partial_{t} + \partial_{x} \right) w_{+} + \left( 2 \partial_{\tau} k + 3 k \partial_{\xi} k + \frac{1}{3} \partial_{\xi}^{3} k - \partial_{\xi}^{-1} k \right) (x-t,\tau) - \left(v^{0}_{(1/2)} - \partial_{\xi}^{-1} k^{0}\right)(x) = 0,\\
&\left(\partial_{t} - \partial_{x} \right) w_{-} + \left(k \partial_{\xi} k - \frac{1}{3} \partial_{\xi}^{3} k + \partial_{\xi}^{-1} k \right) (x-t,\tau) + \left( v^{0}_{(1/2)} - \partial_{\xi}^{-1} k^{0}\right)(x) = 0.\\
\end{aligned}
\end{equation}

\noindent The following lemma (Lemma 7.6 in \cite{Lannes_ww}) is the key point to control $u$ and $v$.

\begin{lemma}\label{control_diff_speed}
\noindent Let $c_{1} \neq c_{2}$. Let $k_{1}, k_{2}, k_{3} \in L^{2}(\R)$ with $k_{2} = K'_{2}$ and $K_{2} \in L^{2}(\R)$. We consider the unique solution $k$ of

\begin{equation*}
\left\{
\begin{aligned}
&(\partial_{t} + c_{1} \partial_{x} )k = k_{1}(x-c_{1} t) + k_{2}(x-c_{2} t) + k_{3}(x-c_{2} t),\\
&k_{|t=0} = 0.
\end{aligned}
\right.
\end{equation*}

\noindent Then, $\underset{t \shortrightarrow \infty}{\lim} \lver \frac{1}{t} k(t, \cdot) \rver_{2} = 0$ if and only if $k_{1} \equiv 0$ and in that case

\begin{equation*}
\lver k(t, \cdot) \rver_{2} \leq \frac{C}{\lver c_{1} - c_{2} \rver} \left(\lver K_{2} \rver_{2} \frac{t}{1+t} + \lver k_{3} \rver_{H^{2}} \frac{t}{1+\sqrt{t}}\right).
\end{equation*}
\end{lemma}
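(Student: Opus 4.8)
The plan is to integrate the scalar transport equation explicitly along its characteristics and then to analyse the three forcing terms separately, the whole point being that the piece travelling at the transport speed $c_1$ is \emph{resonant} and produces secular growth linear in $t$, while the two pieces travelling at the distinct speed $c_2$ are non-resonant and contribute only a uniformly bounded term and a sublinearly growing term respectively. First I would use that $\partial_t + c_1 \partial_x$ has characteristics $x - c_1 t = \mathrm{const}$ and that the data vanish, so Duhamel's formula gives
\[
k(t,x) = \int_0^t \bigl[ k_1(\cdot - c_1 s) + k_2(\cdot - c_2 s) + k_3(\cdot - c_2 s)\bigr]\bigl(x - c_1(t-s)\bigr)\,ds.
\]
Evaluating the shifted arguments, the $k_1$-argument is $x - c_1(t-s) - c_1 s = x - c_1 t$, which is \emph{independent of $s$}, whereas for $k_2,k_3$ it equals $x - c_1 t + (c_1-c_2)s$. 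Hence
\[
k(t,x) = t\,k_1(x - c_1 t) + I_2(t,x) + I_3(t,x), \qquad I_j(t,x) = \int_0^t k_j\bigl(x - c_1 t + (c_1-c_2)s\bigr)\,ds .
\]

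The equivalence then follows once the non-resonant terms are shown to be $o(t)$ in $L^2$. Granting the quantitative bounds below, $\lver I_2(t,\cdot)\rver_2 = O(1)$ and $\lver I_3(t,\cdot)\rver_2 = O(\sqrt{t})$, so $\tfrac1t k(t,\cdot) = k_1(\cdot - c_1 t) + o(1)$ in $L^2$. By translation invariance of the $L^2$-norm, $\lver \tfrac1t k(t,\cdot)\rver_2 \to \lver k_1\rver_2$ as $t\to\infty$, and this limit vanishes exactly when $k_1 \equiv 0$; this is the stated equivalence.

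For the quantitative estimate when $k_1 \equiv 0$, I treat $I_2$ and $I_3$ differently. For $I_2$ the change of variable $\sigma = x - c_1 t + (c_1-c_2)s$ telescopes the integral into a difference of translates of the $L^2$ antiderivative,
\[
I_2(t,x) = \frac{1}{c_1-c_2}\bigl( K_2(x - c_2 t) - K_2(x - c_1 t)\bigr),
\]
so that $\lver I_2(t,\cdot)\rver_2 \le \tfrac{2}{\lvert c_1-c_2\rvert}\lver K_2\rver_2$ uniformly in $t$; here it is crucial that $k_2 = K_2'$ with $K_2 \in L^2$, which is precisely what forbids secular growth from this term, and combining this uniform bound with the trivial short-time estimate gives the envelope $\tfrac{t}{1+t}$. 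For $I_3$ no such antiderivative is available, and I would pass to Fourier variables, where $\widehat{I_3}(t,\xi) = \widehat{k_3}(\xi)\,\dfrac{e^{-ic_2 t\xi}-e^{-ic_1 t\xi}}{i(c_1-c_2)\xi}$, a Fourier multiplier whose modulus is bounded by $\min\!\bigl(t,\tfrac{2}{\lvert c_1-c_2\rvert\,\lvert\xi\rvert}\bigr)$. Splitting the frequency integral at the crossover $\lvert\xi\rvert \sim (\lvert c_1-c_2\rvert\,t)^{-1}$ — the multiplier being $\approx t$ on the low band of width $\sim 1/t$ and decaying like $1/\lvert\xi\rvert$ on the high band, controlled by $\lver k_3\rver_{H^2}$ — produces the square-root growth, i.e. the factor $\tfrac{t}{1+\sqrt t}$.

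The main obstacle I anticipate is exactly this sharp sublinear rate for $I_3$: extracting the $\tfrac{t}{1+\sqrt t}$ envelope against the $H^2$-norm, rather than the naive linear-in-$t$ bound. The growth stems from the low-frequency band $\lvert\xi\rvert \lesssim 1/t$, on which the multiplier $\approx t$ contributes $t^2\cdot(1/t) = t$ to the squared norm, while the high frequencies must be absorbed through the $\lvert\xi\rvert^{-2}$ decay of the multiplier; balancing the two regimes against the correct norm is where the care lies. By contrast, the resonance bookkeeping and the telescoping identity for $I_2$ are elementary once the characteristic representation is written down, so I expect no difficulty there.
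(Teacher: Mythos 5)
The paper does not actually prove this lemma: it is invoked as Lemma 7.6 of \cite{Lannes_ww} and used as a black box, so there is no in-paper argument to compare against, and I assess your proposal on its own. Your characteristic/Duhamel representation is correct, the identification of the resonant contribution $t\,k_{1}(x-c_{1}t)$ is exactly the right mechanism for the equivalence, and the telescoping identity $I_{2}=\frac{1}{c_{1}-c_{2}}\bigl(K_{2}(\cdot-c_{2}t)-K_{2}(\cdot-c_{1}t)\bigr)$ does give the uniform bound $\frac{2}{\lvert c_{1}-c_{2}\rvert}\lver K_{2}\rver_{2}$. (Your claim that the envelope $\frac{t}{1+t}$ then follows from ``the trivial short-time estimate'' is loose, since that estimate costs $\lver k_{2}\rver_{2}$ rather than $\lver K_{2}\rver_{2}$, but this is a cosmetic point.)

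The genuine gap is in the $I_{3}$ estimate. Your frequency splitting at $\lvert\xi\rvert\sim(\lvert c_{1}-c_{2}\rvert t)^{-1}$ yields the announced contribution $t^{2}\cdot(1/t)=t$ from the low band only if $\lvert\widehat{k_{3}}(\xi)\rvert$ is bounded pointwise there, i.e.\ only if you control $\lver\widehat{k_{3}}\rver_{L^{\infty}}$ near $\xi=0$ (an $L^{1}$- or weighted-$L^{2}$-type hypothesis such as $xk_{3}\in L^{2}$); the $H^{2}$ norm gives no such control, and on the high band the $\lvert\xi\rvert^{-2}$ decay of the multiplier is equally useless near the cutoff $\lvert\xi\rvert\sim 1/t$ without it. Concretely, taking $\widehat{k_{3}}=A\,\mathbf{1}_{[\Lambda,2\Lambda]}$ and times $t\sim 1/\Lambda$ (with $\Lambda\to 0$) one finds $\lver I_{3}(t,\cdot)\rver_{2}$ of order $\frac{t}{\lvert c_{1}-c_{2}\rvert}\lver k_{3}\rver_{H^{2}}$, so no refinement of your splitting can extract the $\frac{t}{1+\sqrt{t}}$ envelope from the $H^{2}$ norm alone: the sublinear rate requires either low-frequency information on $k_{3}$ (e.g.\ $k_{3}\in L^{1}$ or spatial decay) or the product structure $f(x-c_{1}t)\,g(x-c_{2}t)$ of the forcing, which is how such $\sqrt{t}$ bounds are obtained in the KdV-approximation literature and is the content behind the cited lemma. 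Note finally that the qualitative statement $\lver I_{3}(t,\cdot)\rver_{2}=o(t)$, which is all the ``if and only if'' part needs, does not require the sharp rate: it follows from dominated convergence applied to $\int\lvert\widehat{k_{3}}(\xi)\rvert^{2}\,\frac{4\sin^{2}\bigl((c_{1}-c_{2})t\xi/2\bigr)}{(c_{1}-c_{2})^{2}\xi^{2}t^{2}}\,d\xi$, or by approximating $k_{3}$ in $L^{2}$ by functions whose Fourier transform vanishes near the origin; as written, however, your route to both the equivalence and the quantitative bound rests on the unproved $O(\sqrt{t}\,)\lver k_{3}\rver_{H^{2}}$ claim.
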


\noindent Then, in order to avoid a linear growth for the solution of \eqref{wave_ostrov}, we also have to impose that 

\begin{equation}\label{ostrov_eq_f}
\partial_{\tau} k + \frac{3}{2} k \partial_{\xi} k + \frac{1}{6} \partial_{\xi}^{3} k = \frac{1}{2} \partial_{\xi}^{-1} k,
\end{equation}

\noindent which is the Ostrovsky equation. Before giving a full justification of the Ostrovsky equation, we need a local wellposedness result of this equation. The following proposition is a generalization of Theorem 2.1 in \cite{linares_ostrov} and Theorem 2.6 in \cite{varlamov_ostrov} (see also \cite{ostrov_low_reg} for weak solutions).

\begin{prop}\label{existence_ostrov}
\noindent Let $s > \frac{7}{4}$ and $k_{0} \in \partial_{x} H^{s}(\R)$. Then, there exists a time $T > 0$ and a unique solution $k \in \mathcal{C} \left([0,T]; \partial_{x} H^{s}(\R)) \right)$ to the Ostrovsky equation \eqref{ostrov_eq_f} and one has

\begin{equation*}
\lver \partial_{\xi}^{-1} k(t,\cdot) \rver_{H^{s}} \leq C \left(T, \lver \partial_{\xi}^{-1} k_{0} \rver_{H^{s}} \right).
\end{equation*}

\noindent Moreover, if $s \geq 3$, $k_{0} \in \partial_{x}^{2} H^{s+1}(\R)$, $k \in \mathcal{C} \left([0,T]; \partial_{x}^{2} H^{s+1}(\R)) \right)$ and one has

\begin{equation*}
\lver \partial_{\xi}^{-2} k(t,\cdot) \rver_{H^{s+1}} \leq C \left(T, \lver \partial_{\xi}^{-2} k_{0} \rver_{H^{s+1}} \right).
\end{equation*}

\end{prop}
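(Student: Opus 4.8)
The plan is to solve the Ostrovsky equation \eqref{ostrov_eq_f} by working through the antiderivative $K = \partial_{\xi}^{-1} k$, since the statement measures everything in terms of $\partial_{\xi}^{-1}k$ and $\partial_{\xi}^{-2}k$. The structural fact driving the whole argument is that the two linear operators in \eqref{ostrov_eq_f} — the dispersion $\partial_{\xi}^{3}$ and the nonlocal Coriolis term $\partial_{\xi}^{-1}$ — are both skew-adjoint on $L^{2}(\R)$, their Fourier symbols $(i\xi)^{3}$ and $1/(i\xi)$ being purely imaginary. Hence in any $L^{2}$-based energy estimate their contributions have vanishing real part and drop out, exactly as for KdV. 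I would keep the unknown as $k$ (integrating the equation first would cost a derivative in the nonlinearity) and control the quantity $\mathcal{E}^{s}(k) = \lver k \rver_{H^{s-1}}^{2} + \lver \partial_{\xi}^{-1}k \rver_{L^{2}}^{2}$, which, after splitting into high and low frequencies, is equivalent to $\lver \partial_{\xi}^{-1}k \rver_{H^{s}}^{2}$.

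First I would establish the a priori estimate. Applying $\Lambda^{s-1}$ to \eqref{ostrov_eq_f} and pairing with $\Lambda^{s-1}k$, the dispersive term vanishes by skew-adjointness, the Coriolis term $\frac12(\Lambda^{s-1}\partial_{\xi}^{-1}k,\Lambda^{s-1}k)$ is an exact derivative $\frac12(\Lambda^{s-1}K,\partial_{\xi}\Lambda^{s-1}K)$ and so vanishes, and the nonlinear term $\frac32 k\partial_{\xi}k$ is handled by Kato–Ponce commutator estimates. Separately, pairing the evolution of $\partial_{\xi}^{-1}k$ with $\partial_{\xi}^{-1}k$ in $L^{2}$, the dispersive and nonlinear pieces are controlled after one integration by parts, and the only genuinely delicate term is the lowest-frequency Coriolis interaction $(\partial_{\xi}^{-1}K,K)$. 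This term has purely imaginary symbol, so its real part is zero and it never enters the bound — the crucial point being that the resulting differential inequality $\frac{d}{d\tau}\mathcal{E}^{s}\leq C(\mathcal{E}^{s})$ depends on $k$ only through $\lver \partial_{\xi}^{-1}k\rver_{H^{s}}$. Gronwall's lemma then yields an existence time $T = T(\lver \partial_{\xi}^{-1}k_{0}\rver_{H^{s}})$ and the stated bound $\lver \partial_{\xi}^{-1}k(t)\rver_{H^{s}}\leq C(T,\lver \partial_{\xi}^{-1}k_{0}\rver_{H^{s}})$.

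To convert these estimates into existence and uniqueness I would introduce a regularization — a parabolic term $+\nu\partial_{\xi}^{2}$ or a frequency truncation of both the nonlinearity and of $\partial_{\xi}^{-1}$ — for which the Cauchy problem is solved by a standard fixed point and for which the estimates above remain uniform, precisely because the skew-adjoint structure is preserved. Passing to the limit via the uniform bounds and compactness produces a solution; uniqueness and continuous dependence follow from an energy estimate on the difference of two solutions at one order lower, $H^{s-1}$, and continuity in time at the top norm, $k\in\mathcal{C}([0,T];\partial_{x}H^{s})$, is recovered by a Bona–Smith approximation argument. The threshold $s>\frac{7}{4}$ is dictated by the nonlinear product and commutator estimates; to reach it sharply I would lean on the refined analysis of \cite{linares_ostrov} and \cite{varlamov_ostrov}, adapted to the $\partial_{x}H^{s}$ formulation.

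For the higher-regularity statement ($s\geq 3$, $k_{0}\in\partial_{x}^{2}H^{s+1}$) I would run the same scheme one antiderivative lower, propagating the regularity of $\partial_{\xi}^{-2}k$. The decisive simplification is that this second antiderivative is now supplied by the data, so the low-frequency obstruction disappears: the Coriolis term becomes a genuinely bounded, zeroth-order skew-symmetric perturbation contributing at most $\sim\lver\partial_{\xi}^{-2}k\rver_{H^{s+1}}^{2}$, which Gronwall absorbs with an exponential-in-time factor, while the dispersion is still skew-adjoint and the nonlinearity is controlled by product estimates at the $H^{s+1}$ level. Persistence of regularity then gives $k\in\mathcal{C}([0,T];\partial_{x}^{2}H^{s+1})$ with $\lver\partial_{\xi}^{-2}k(t)\rver_{H^{s+1}}\leq C(T,\lver\partial_{\xi}^{-2}k_{0}\rver_{H^{s+1}})$ on the same interval. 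I expect the main obstacle throughout to be the nonlocal term: making sense of $\partial_{\xi}^{-1}k$ at low frequencies and ensuring every a priori bound depends only on the norms in the statement. The skew-adjointness of $\partial_{\xi}^{-1}$ is exactly what makes this possible, so the care needed is mainly in justifying that cancellation rigorously at the borderline regularity — which I would secure by proving the estimates first for data with an extra antiderivative in $L^{2}$ and then passing to the limit, the bound being independent of that extra control.
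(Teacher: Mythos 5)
On the first half of the proposition your sketch is serviceable but it is worth noting that the paper does not prove that part at all: it invokes Theorem 2.1 of \cite{linares_ostrov} and Theorem 2.6 of \cite{varlamov_ostrov}. A pure energy scheme of the kind you outline cannot by itself reach $s>\frac{7}{4}$ (that is, $k\in H^{s-1}$ with $s-1>\frac{3}{4}$): Kato--Ponce commutator estimates for $k\partial_{\xi}k$ require $s-1>\frac{3}{2}$, and the $\frac{3}{4}$ threshold comes from the dispersive smoothing analysis of the cited references, not from product estimates. Since you ultimately defer to those references, this part is tolerable, though your attribution of the threshold is off.

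The genuine gap is in the second half, which is the only statement the paper actually proves, and there your plan would fail. ``Running the same scheme one antiderivative lower'' means estimating $K_{2}=\partial_{\xi}^{-2}k$ through the twice-integrated equation $\partial_{\tau}K_{2}+\frac{3}{4}\partial_{\xi}^{-1}(k^{2})+\frac{1}{6}\partial_{\xi}k=\frac{1}{2}\partial_{\xi}^{-1}K_{2}$. Two terms are not what you claim. The Coriolis term is $\frac{1}{2}\partial_{\xi}^{-3}k$: it requires a \emph{third} antiderivative that the data do not supply, so it is not a bounded zeroth-order perturbation, and the skew-symmetry cancellation $(\partial_{\xi}^{-1}K_{2},K_{2})_{2}=0$ presupposes precisely the low-frequency integrability that is missing. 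Worse, the nonlinearity becomes $\frac{3}{4}\partial_{\xi}^{-1}(k^{2})$, which is generically not in $L^{2}$ at all because $\int_{\R}k^{2}>0$; no product estimate ``at the $H^{s+1}$ level'' applies to it, and your fallback (prove the estimate for data with an extra antiderivative and pass to the limit) does not help, since any bound on $(\partial_{\xi}^{-1}(k^{2}),K_{2})_{2}$ obtained that way involves the extra norm $\lver \partial_{\xi}^{-3}k \rver_{2}$ itself and is therefore not uniform in the approximation. The paper avoids both obstacles with a structural argument you are missing: setting $\tilde{k}=\partial_{\tau}k$, which solves $\partial_{\tau}\tilde{k}+\frac{3}{2}\partial_{\xi}(k\tilde{k})+\frac{1}{6}\partial_{\xi}^{3}\tilde{k}-\frac{1}{2}\partial_{\xi}^{-1}\tilde{k}=0$, it writes the Duhamel formula for $\partial_{\xi}^{-1}\tilde{k}$ with the unitary group $S(t)$ of the linear Ostrovsky equation and the $L^{2}$ source $k\tilde{k}$, uses that $\partial_{\xi}^{-1}\tilde{k}_{|\tau=0}\in L^{2}$ because $k_{0}\in\partial_{x}^{2}H^{s+1}$ (it equals a combination of $k_{0}^{2}$, $\partial_{\xi}^{2}k_{0}$ and $\partial_{\xi}^{-2}k_{0}$), and then recovers $\partial_{\xi}^{-2}k$ pointwise in time from the identity $\frac{1}{2}\partial_{\xi}^{-2}k=\partial_{\xi}^{-1}\partial_{\tau}k+\frac{3}{4}k^{2}+\frac{1}{6}\partial_{\xi}^{2}k$ furnished by the equation itself. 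Unless you adopt this (or an equivalent) device, or explain how to define and control $\partial_{\xi}^{-1}(k^{2})$ and $\partial_{\xi}^{-3}k$, the higher-regularity statement remains unproved in your proposal.
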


\begin{proof}
\noindent We only prove the second point of the Proposition. We denote by $S(t)$ the semi-group of the linearized Ostrovsky equation

\begin{equation*}
\partial_{\tau} k + \frac{1}{6} \partial_{\xi}^{3} k - \frac{1}{2} \partial_{\xi}^{-1} k = 0,
\end{equation*}

\noindent and it is easy to check that this semi-group acts unitary on $H^{s}(\R)$. We denote $\tilde{k} = \partial_{\tau} k$. Then, $\tilde{k}$ satisfies the equation

\begin{equation*}
\partial_{\tau} \tilde{k} + \frac{3}{2} \partial_{\xi} \left(\tilde{k} k \right) + \frac{1}{6} \partial_{\xi}^{3} \tilde{k} - \frac{1}{2} \partial_{\xi}^{-1} \tilde{k} = 0.
\end{equation*}

\noindent Using the Duhamel's formula we obtain

\begin{equation*}
\partial_{\xi}^{-1} \tilde{k}(t, \cdot) = S(t) \partial_{\xi}^{-1} \tilde{k}_{0} + \frac{3}{2} \int_{0}^{t} S(t-s) \left( k \tilde{k} \right)(s,\cdot) ds.
\end{equation*}

\noindent Since $\partial_{\xi}^{-1} \tilde{k}_{0} = - \frac{3}{4} k_{0}^{2} - \partial_{\xi}^{2} k_{0} + \partial_{\xi}^{-2} k_{0} \in L^{2}(\R)$, we get the result since we have

\begin{equation*}
\frac{1}{2} \partial_{\xi}^{-2} k = \partial_{\xi}^{-1} \partial_{\tau} k + \frac{3}{4} k^{2} + \frac{1}{6} \partial_{\xi}^{2} k.
\end{equation*}
\end{proof}

\noindent Notice that contrary to the KdV equation, we can not expect a global existence. We can now give the main result of this part.

\begin{thm}\label{full_just_ostrov}
\noindent Let $k^{0} \in \partial_{x}^{2} H^{10}(\R)$, such that $1+\epsilon k^{0} \geq h_{\min} > 0$, $v^{0} \in \partial_{x} H^{6}(\R)$ and $\mu_{0} > 0$. Then, there exists a time $T > 0$, such that for all $0 < \mu \leq \mu_{0}$, we have

\medskip

\noindent (i) a unique classical solution \upshape$\left(\zeta_{B},u_{B}, v_{B} \right)$ \itshape of \eqref{boussi_weakrot} with initial data \upshape$\left(k^{0}, k^{0}, \sqrt{\mu} v^{0} \right)$ \itshape on $\left[ 0, \frac{T}{\mu} \right]$.

\medskip

\noindent (ii)  a unique classical solution \upshape$k$ \itshape of \eqref{ostrov_eq_f} with initial data \upshape$k^{0}$ \itshape on $\left[ 0, T \right]$.

\medskip 

\noindent (iii) If we define $\left(\zeta_{Ost}, u_{Ostr} \right)(t,x) = \left( k(x-t,\mu t),k(x-t,\mu t ) \right)$ we have the following error estimate for all $0 \leq t \leq \frac{T}{\mu}$,

\begin{equation*}
\lver \left(\zeta_{B},u_{B} \right) - \left(\zeta_{Ost},u_{Ost} \right) \rver_{L^{\infty}([0,t] \times \R)} \leq  C \left((1+\sqrt{\mu} t) \frac{\mu t}{1+t} + \mu^{\frac{3}{2}} t \right)
\end{equation*}
\itshape

\noindent where $C = C\left(T, \frac{1}{h_{\min}}, \mu_{0}, \lver \partial_{x}^{-2} k^{0} \rver_{H^{10}}, \lver \partial_{x}^{-1} v^{0} \rver_{H^{6}} \right)$.

\end{thm}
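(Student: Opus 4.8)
The plan is to compare the exact solution of the weak-rotation Boussinesq system \eqref{boussi_weakrot} with a WKB approximate solution built from the Ostrovsky profile. The whole point is that the Ostrovsky equation \eqref{ostrov_eq_f} is exactly the solvability (non-resonance) condition ensuring that the correctors do not grow secularly over the long time scale $1/\mu$.

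First I would settle the two existence statements. Point (ii) is immediate from Proposition \ref{existence_ostrov}: since $k^{0} \in \partial_{x}^{2} H^{10}(\R)$ I take $s = 9 \geq 3$ and obtain $k \in \mathcal{C}([0,T]; \partial_{x}^{2}H^{10}(\R))$ together with a uniform control of $\lver \partial_{\xi}^{-2} k \rver_{H^{10}}$ on $[0,T]$; this high regularity is what will later let me differentiate $k$ in $\xi$ and $\tau$ as often as the residuals \eqref{remainder} require. Point (i) follows from Corollary \ref{existence_weak_rot_boussi} applied with data $(k^{0},k^{0},\sqrt{\mu}v^{0})$: the Ostrovsky regime \eqref{ostrov_regime} is contained in $\mathcal{A}_{\text{Bouss}}$, so \eqref{boussi_weakrot} is well posed on an interval of length $T_{0}/\mu$, and after shrinking $T$ below both this threshold and the Ostrovsky existence time the solution $(\zeta_{B}, u_{B}, v_{B})$ lives on $[0, T/\mu]$.

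Next I would construct the approximate solution $(\zeta_{app}, u_{app}, v_{app})$ of \eqref{ansatz_ostrov}: define $v_{(1/2)}$ by integrating \eqref{eq_v_f}, and define the correctors $\zeta_{(1)}, u_{(1)}$ through the diagonalised transport system \eqref{wave_ostrov} for $w_{\pm} = \zeta_{(1)} \pm u_{(1)}$, with zero initial data so that the approximate and exact data coincide. By construction, once $k$ solves \eqref{ostrov_eq_f} the forcing of $w_{+}$ travelling at the characteristic speed $+1$ vanishes, while every surviving forcing of $w_{+}$ and $w_{-}$ travels at a speed different from the corresponding characteristic speed. This is exactly the setting of Lemma \ref{control_diff_speed}: writing each non-resonant forcing as $\partial_{\xi}$ of an $L^{2}$ function (using $k\partial_{\xi}k = \frac{1}{2}\partial_{\xi}k^{2}$, $\partial_{\xi}^{3}k = \partial_{\xi}\partial_{\xi}^{2}k$, $\partial_{\xi}^{-1}k = \partial_{\xi}\partial_{\xi}^{-2}k$, and the same for the stationary term $v^{0}-\partial_{\xi}^{-1}k^{0} = \partial_{\xi}(\partial_{\xi}^{-1}v^{0}-\partial_{\xi}^{-2}k^{0}) \in H^{6}$), the lemma yields $\lver w_{\pm}(t,\cdot) \rver_{2} \leq C \frac{t}{1+t}$ uniformly for $\tau \in [0,T]$, and likewise for the spatial and slow-time derivatives needed below, since $\partial_{\tau}w_{\pm}$ solves the $\tau$-differentiated system whose resonant part is again annihilated by Ostrovsky. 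Hence the correctors, and therefore the residuals $R_{1}, R_{2}, R_{3}$ of \eqref{remainder}, stay bounded on $[0, T/\mu]\times\R$ by a constant of the form announced in the statement.

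By \eqref{boussi_weak_app} with the choice $R_{(1)}^{1} = R_{(1)}^{2} = R_{(1/2)}^{3} = 0$, the triple $(\zeta_{app}, u_{app}, v_{app})$ solves \eqref{boussi_weakrot} up to the residual $(\mu^{2}R_{1}, \mu^{2}R_{2}, \mu^{\frac{3}{2}}R_{3})$. I would then run the energy estimate for \eqref{boussi_weakrot} along the lines of Proposition \ref{stability_boussi}: the system is symmetrised by multiplying the $u$- and $v$-equations by $h=1+\mu\zeta$, and the Coriolis coupling is skew-symmetric, so it contributes nothing to the energy. With vanishing initial error and a residual of size $\mu^{\frac{3}{2}}$ this gives $\lver (\zeta_{B}, u_{B}, v_{B}) - (\zeta_{app}, u_{app}, v_{app}) \rver_{H^{s-1}} \leq C\mu^{\frac{3}{2}} t$ on $[0, T/\mu]$. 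Combining, through Sobolev embedding, with the elementary bound $\lver (\zeta_{app}, u_{app}) - (\zeta_{Ost}, u_{Ost}) \rver_{L^{\infty}} = \mu\lver (\zeta_{(1)}, u_{(1)}) \rver_{L^{\infty}} \leq C\mu\frac{t}{1+t}$ coming from the corrector estimate, and keeping track of the slow modulation $\tau=\mu t$ in the forcing of \eqref{wave_ostrov} (responsible for the additional $\sqrt{\mu}t$ factor), the triangle inequality delivers the announced bound $(1+\sqrt{\mu}t)\frac{\mu t}{1+t} + \mu^{\frac{3}{2}}t$.

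The hard part will be the control of secular growth over the horizon $t \sim 1/\mu$. A naive corrector construction produces linear-in-$t$ growth (resonance of the forcing with the transport operator), which multiplied by the prefactor $\mu$ would already be $\mathcal{O}(1)$ at $t\sim 1/\mu$ and ruin the approximation. The two mechanisms defeating this are (a) imposing the Ostrovsky equation \eqref{ostrov_eq_f}, which is precisely the condition killing the resonant forcing of $w_{+}$, and (b) the dispersive decay quantified in Lemma \ref{control_diff_speed}, which turns the remaining non-resonant forcings into merely sublinear $\frac{t}{1+t}$ growth. On the stability side the parallel danger of an exponential Gronwall factor over a time $1/\mu$ is averted because the Coriolis coupling is skew-symmetric and the remaining nonlinear contributions carry a factor $\mu$, so that the Gronwall constant over $[0,T/\mu]$ is $e^{CT}$ rather than $e^{CT/\sqrt{\mu}}$.
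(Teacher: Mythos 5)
Your proposal is correct and follows essentially the same route as the paper: existence via Corollary \ref{existence_weak_rot_boussi} and Proposition \ref{existence_ostrov}, construction of the WKB correctors through \eqref{eq_v_f} and \eqref{wave_ostrov}, sublinear control of the correctors and hence of the residuals \eqref{remainder} via Lemma \ref{control_diff_speed} with the Ostrovsky equation cancelling the resonant forcing, then the stability estimate of Proposition \ref{stability_boussi} and a triangle inequality with the leading-order comparison $\mu\frac{t}{1+t}$. The only difference is cosmetic: the paper records the residual bound as $\lver R \rver_{H^{2}} \leq C(\frac{t}{1+t}+\mu t+1)$ rather than calling it bounded, which is where the factor $(1+\sqrt{\mu}t)$ in the final estimate is made explicit.
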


\begin{proof}
\noindent In all the proof, $C$ will be a constant as in the theorem. The first and second point follow from Corollary \ref{existence_weak_rot_boussi} and \ref{existence_ostrov}. In order to get the error estimate, we have to control the remainders $R_{1}, R_{2}, R_{3}$,  defined in \eqref{remainder}. First, using Lemma \ref{control_diff_speed},  the fact that we can express the quantities $\frac{1}{2} \partial_{\xi} k^{2} - \frac{1}{3} \partial_{\xi}^{3} k$, $\partial_{\xi}^{-1} k$ and $v_{0}$ as derivatives with respect to $x$ and the fact that $k$ satisfies the Ostrovsky equation \eqref{ostrov_eq_f}, we have 

\begin{equation*}
\lver \zeta_{(1)} \rver_{2} + \lver u_{(1)} \rver_{2} \leq C \frac{t}{1 + t}.
\end{equation*} 

\noindent But we can also control all the derivatives with respect to $\tau$ or $x$ of $u$ and $v$ be differentiating \eqref{wave_ostrov}. Hence, we get a control for the remainders $R_{1}$ and $R_{2}$. For $R_{3}$, we use the fact that $v= \partial_{x}^{-1} k$. We finally, obtain

\begin{equation*}
\lver R_{1} \rver_{H^{2}} + \lver R_{2} \rver_{H^{2}} + \lver R_{3} \rver_{H^{2}} \leq C \left( \frac{t}{1+t} + \mu t + 1\right),
\end{equation*}
 
\noindent Then, thanks to Proposition \ref{stability_boussi} and remark \ref{weak_rot_boussi}, we get

\begin{equation*}
\lver \left(\zeta_{B}, u_{B}, v_{B} \right) - \left(\zeta_{app}, u_{app}, v_{app} \right) \rver_{L^{\infty}([0,t] \times \R)} \leq C \mu^{\frac{3}{2}} t \left( \frac{t}{1+t} + \mu t +1 \right).
\end{equation*}

\noindent Moreover, we have

\begin{equation*}
\lver \left(\zeta_{app}, u_{app} \right) - \left(\zeta_{Ost},v_{Ost} \right) \rver_{L^{\infty}([0,t] \times \R)} \leq \mu \frac{t}{1+t}.
\end{equation*}

\noindent Then, the result follows easily.

\end{proof}

\noindent This theorem, combined with Theorem \ref{boussi_water_wave_compare}, shows that the solutions of the water waves equations \eqref{castro_lannes_formulation} is well approximated over times $\mathcal{O} \left(\frac{1}{\sqrt{\mu}} \right)$ with an accuracy of order $\mathcal{O} \left( \mu \right)$ by the Ostrovsky approximation if we have a small Coriolis forcing. The approach we develop here is similar to the one of the KP equations (see for instance \cite{Saut_Lannes_KP}, \cite{Alvarez_Lannes} or Part 7.2.1 in \cite{Lannes_ww}). The fact that $k^{0} \in \partial_{x} H^{8}$ is essential and physical since a solution of the Ostrovsky equation has to be mean free. However, we suppose here that $k^{0} \in \partial_{x}^{2} H^{9}(\R)$ and  $v^{0} \in \partial_{x} H^{5}(\R)$ which is more restrictive. In fact, using the strategy developed in \cite{benyoussef_lannes} for the KP approximation we can hope to release this assumption. Finally, notice that contrary to the KdV equation, the Ostrovsky equation does not admit solitons (\cite{zhang_liu_no_soliton_ostrov}, \cite{galkin_no_soliton_ostrov}).

\subsection{Very weak rotation, the KdV equation}\label{model_kdv}

\noindent As we said before, without Coriolis forcing, it is well-known, that the KdV equation is a good approximation of the water waves equations. In this part we show that if $\frac{\epsilon}{\text{Ro}}$ is small enough, we get the KdV equation as an asymptotic model. We recall the KdV equation

\begin{equation}\label{kdv_eq}
\partial_{\tau} k +  \frac{3}{2} k \partial_{\xi} k + \frac{1}{6} \partial_{\xi}^{3} k = 0.
\end{equation}

\noindent Inspired by \cite{germain-renouard}, we show that $\frac{\epsilon}{\text{Ro}} = \mathcal{O}(\mu)$ is sufficient. we consider the asymptotic regime

\begin{equation}\label{kdv_regime}
\mathcal{A}_{KdV} = \left\{ \left( \epsilon, \beta, \gamma, \mu, \text{Ro} \right), 0 \leq \mu \leq \mu_{0}, \epsilon = \mu, \beta = \gamma = 0, \frac{\epsilon}{\text{Ro}} = \mu \right\}.
\end{equation}

\noindent Then, the Boussinesq-Coriolis equations become (see Remark \ref{weak_rot_boussi})

\begin{equation}\label{boussi_kdv}
\left\{
\begin{aligned}
&\partial_{t} \zeta + \partial_{x} \left( [1+\mu \zeta] u  \right) = 0,\\
&\left(1- \frac{\mu}{3} \partial_{x}^{2} \right) \partial_{t} u + \partial_{x} \zeta + \mu u \partial_{x} u - \mu v = 0,\\
&\partial_{t} v + \mu u \partial_{x} v + \mu u = 0.
\end{aligned}
\right.
\end{equation}

\noindent Proceeding as in the previous part, we seek an approximate solution $\left(\zeta_{app}, u_{app}, v_{app} \right)$ of \eqref{boussi_kdv} under the form

\begin{equation}\label{ansatz_kdv}
\begin{aligned}
&\zeta_{app}(t,x) = k(x-t,\mu t) + \mu \zeta_{(1)}(t,x,\mu t),\\
&u_{app}(t,x) = k(x-t,\mu t) + \mu u_{(1)}(t,x,\mu t),\\
&v_{app}(t,x) = \mu v_{(1)}(t,x,\mu t).
\end{aligned}
\end{equation}

\noindent Then, we plug the ansatz in Sytem \eqref{boussi_kdv} and we get

\begin{equation}\label{boussi_kdv_app}
\begin{aligned}
&\partial_{t} \zeta_{app} + \partial_{x} \left( [1+\mu \zeta_{app}] u_{app}  \right) = \mu R_{(1)}^{1} + \mu^{2} R_{1},\\
&\left(1- \frac{\mu}{3} \partial_{x}^{2} \right) \partial_{t} u_{app} + \partial_{x} \zeta_{app} + \mu u_{app} \partial_{x} u_{app} - \mu v_{app} = \mu R_{(1)}^{2} + \mu^{2} R_{2},\\
&\partial_{t} v_{app} + \mu u_{app} \partial_{x} v_{app} + \mu u_{app} = \mu R_{(1)}^{3} + \mu^{2} R_{3},
\end{aligned}
\end{equation}

\noindent where

\begin{equation*}
\begin{aligned}
&R_{(1)}^{1} = \partial_{t} \zeta_{(1)} + \partial_{x} u_{(1)} + \partial_{\tau} k + 2 k \partial_{\xi} k,\\
&R_{(1)}^{2} = \partial_{t} u_{(1)} + \partial_{x} \zeta_{(1)} + \partial_{\tau} k +  \frac{1}{3} \partial^{3}_{\xi}  k + k \partial_{\xi} k,\\
&R_{(1)}^{3} = \partial_{t} v_{(1)} + k,
\end{aligned}
\end{equation*}

\noindent and

\begin{equation*}
\begin{aligned}
&R_{1} = \partial_{\tau} \zeta_{(1)} + \partial_{x} \left(k u_{(1)} + k \zeta_{(1)} + \mu \zeta_{(1)} u_{(1)} \right),\\
&R_{2} = \partial_{\tau} u_{(1)} - \frac{1}{3} \partial_{\xi}^{3} \partial_{\tau} k - \frac{1}{3} \partial_{x}^{3} \partial_{t} u_{(1)}  - \mu \frac{1}{3} \partial_{x}^{3} \partial_{\tau} u_{(1)} + \partial_{x} \left(k u_{(1)} \right) + \mu u_{(1)} \partial_{x} u_{(1)} - v_{(1)},\\
&R_{3} = \partial_{\tau} v_{(1)} + \mu \left(k + \mu u_{(1)} \right) \partial_{x} v_{(1)} + u_{(1)}.
\end{aligned}
\end{equation*}

\begin{remark}
\noindent We should also add $v_{(0)}(t,x,\mu t)$ to the ansatz \eqref{ansatz_kdv} for $v_{app}$. However, if we plug it in System \eqref{boussi_kdv} we get $\partial_{t} v_{(0)} = 0$ which leads to $v_{(0)} = 0$ if the quantity is initially zero. Hence, we make this assumption in the following.
\end{remark}

\noindent As before, we assume that $R_{(1)}^{1}(t,x,\tau) = R_{(1)}^{2}(t,x,\tau) = R_{(1)}^{3}(t,x,\tau) = 0$ for all $x \in \R$, $t \in \left[0, \frac{T}{\mu} \right]$ and $\tau \in \left[0,T \right]$ which leads to $v_{(1)} = v^{0}_{(1)} - \partial_{x}^{-1} k^{0} + \partial_{x}^{-1} k$ and, if we denote $w_{\pm} = \zeta_{(1)} \pm u_{(1)}$ we get

\begin{equation*}
\begin{aligned}
&\left(\partial_{t} + \partial_{x} \right) w_{+} + \left( 2 \partial_{\tau} k + 3 k \partial_{\xi} k + \frac{1}{3} \partial_{\xi}^{3} k \right) (x-t,\tau) = 0,\\
&\left(\partial_{t} - \partial_{x} \right) w_{-} + \left(k \partial_{\xi} k - \frac{1}{3} \partial_{\xi}^{3} k \right) (x-t,\tau) = 0\\
\end{aligned}
\end{equation*}

\noindent and to avoid a linear growth of $u$ or $v$ we need that $k$ satisfies \eqref{kdv_eq}. We also have an existence result for the KdV equation (see for instance \cite{kenig_ponce_vega_kdv}).

\begin{prop}\label{existence_kdv}
\noindent Let $s \geq 1$, $k_{0} \in H^{s}(\R)$ and $T > 0$. Then, there exists a unique solution to the KdV equation \eqref{kdv_eq} $k \in \mathcal{C} \left([0,T]; H^{s}(\R)) \right)$ and one have

\begin{equation*}
\lver k \rver_{H^{s}} \leq C \left(T, \lver k_{0} \rver_{H^{s}} \right).
\end{equation*}

\noindent Moreover, if $s \geq 2$ and $k_{0} \in \partial_{x} H^{s+1}(\R)$, $k \in \mathcal{C} \left([0,T]; \partial_{x} H^{s+1}(\R)) \right)$ and we have

\begin{equation*}
\lver \partial_{x}^{-1} k \rver_{H^{s+1}} \leq C \left(T, \lver \partial_{x}^{-1} k_{0} \rver_{H^{s+1}} \right).
\end{equation*}
\end{prop}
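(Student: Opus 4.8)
The plan is to split the statement into its two assertions and, following the pattern of Proposition~\ref{existence_ostrov}, to treat the first as classical and to spend the real effort on propagating the $\partial_{\xi}^{-1}$ structure in the second. For the first assertion I would simply invoke the wellposedness theory of Kenig--Ponce--Vega \cite{kenig_ponce_vega_kdv}: since $s \geq 1 > \frac{3}{4}$, the KdV equation \eqref{kdv_eq} is locally wellposed in $H^{s}(\R)$, and the conservation of the $L^{2}$ norm together with the Hamiltonian (which controls $\lver k \rver_{H^{1}}$) and the higher KdV invariants --- or equivalently a direct $H^{s}$ energy estimate closed by Gronwall --- extends the solution to any prescribed $[0,T]$ with $\lver k \rver_{H^{s}} \leq C(T, \lver k_{0} \rver_{H^{s}})$. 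Nothing in this part is specific to our framework.

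The substance is the second assertion, where $k_{0} \in \partial_{x} H^{s+1}(\R)$ (so $k_{0}$ is mean free and its antiderivative $\partial_{\xi}^{-1} k_{0} = \partial_{x}^{-1} k_{0} \in H^{s+1}(\R)$) and $s \geq 2$. Integrating \eqref{kdv_eq} once in the spatial variable --- using the decay of $k(\cdot,\tau)$ so that $\partial_{\xi}^{-1}(k \partial_{\xi} k) = \tfrac{1}{2} k^{2}$ and $\partial_{\xi}^{-1}(\partial_{\xi}^{3} k) = \partial_{\xi}^{2} k$ --- shows that the antiderivative solves
\[
\partial_{\tau} \left( \partial_{\xi}^{-1} k \right) = - \frac{3}{4} k^{2} - \frac{1}{6} \partial_{\xi}^{2} k,
\]
which is nothing but the integrated (potential) KdV equation. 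Since the first assertion already furnishes $k \in \mathcal{C}([0,T]; H^{s}(\R))$ with $s \geq 2$, its right-hand side lies in $\mathcal{C}([0,T]; L^{2}(\R))$: indeed $\lver k^{2} \rver_{2} \leq \lver k \rver_{L^{\infty}} \lver k \rver_{2} \leq C \lver k \rver_{H^{s}}^{2}$ by Sobolev embedding, and $\lver \partial_{\xi}^{2} k \rver_{2} \leq \lver k \rver_{H^{s}}$ because $s \geq 2$.

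Integrating this identity in time from the datum $\partial_{\xi}^{-1} k_{0}$ then gives
\[
\lver \partial_{\xi}^{-1} k(\tau,\cdot) \rver_{2} \leq \lver \partial_{\xi}^{-1} k_{0} \rver_{2} + \int_{0}^{\tau} \left( \frac{3}{4} \lver k(\sigma) \rver_{L^{\infty}} \lver k(\sigma) \rver_{2} + \frac{1}{6} \lver \partial_{\xi}^{2} k(\sigma) \rver_{2} \right) d\sigma \leq C \left(T, \lver \partial_{\xi}^{-1} k_{0} \rver_{H^{s+1}} \right),
\]
which controls the only low-frequency piece not already governed by the first assertion. To conclude I would note that the high frequencies are free, since $\partial_{\xi} \left( \partial_{\xi}^{-1} k \right) = k \in H^{s}$ yields $\lver \partial_{\xi}^{-1} k \rver_{H^{s+1}} \leq C \left( \lver \partial_{\xi}^{-1} k \rver_{2} + \lver k \rver_{H^{s}} \right)$; combining this with the two bounds above gives $\partial_{\xi}^{-1} k \in \mathcal{C}([0,T]; H^{s+1}(\R))$ and the stated estimate, with time continuity inherited from that of $k$ and of the time integral. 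I expect the main (indeed only real) obstacle to be exactly this low-frequency bookkeeping: verifying that the antiderivative stays well defined and $L^{2}$-controlled near zero frequency --- which is precisely where the hypothesis $k_{0} \in \partial_{x} H^{s+1}$ rather than merely $k_{0} \in H^{s+1}$ enters, and which the integrated equation above propagates, the dispersive contribution reducing to the harmless term $\tfrac{1}{6} \partial_{\xi}^{2} k$ after one integration in space.
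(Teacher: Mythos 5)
Your proof is correct, but it is worth noting how it sits relative to the paper, which offers no written proof of Proposition \ref{existence_kdv} at all: the first assertion is delegated to \cite{kenig_ponce_vega_kdv} (exactly as you do, with the conservation laws giving the bound $C(T,\lver k_{0} \rver_{H^{s}})$ on any $[0,T]$), and the second assertion is implicitly handled by the same device as in the proof of Proposition \ref{existence_ostrov}, where the author differentiates in time, sets $\tilde{k}=\partial_{\tau}k$, and runs a Duhamel formula with the semigroup of the linearized equation before recovering the antiderivative from the equation. Your route for the second assertion is more elementary and arguably better adapted to KdV: integrating \eqref{kdv_eq} once in space gives $\partial_{\tau}\left(\partial_{\xi}^{-1}k\right) = -\tfrac{3}{4}k^{2}-\tfrac{1}{6}\partial_{\xi}^{2}k$, whose right-hand side involves only $k$ itself (unlike the Ostrovsky case, where the nonlocal term $\tfrac{1}{2}\partial_{\xi}^{-2}k$ reappears and motivates the paper's Duhamel detour), so a direct time integration plus the trivial high-frequency observation $\partial_{\xi}\left(\partial_{\xi}^{-1}k\right)=k\in H^{s}$ closes the estimate without any Gronwall or semigroup argument. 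The only point to phrase carefully is a mild circularity in "the antiderivative solves": a priori you do not yet know $\partial_{\xi}^{-1}k(\tau)$ exists for $\tau>0$. The fix is the one you gesture at: define $K(\tau)=\partial_{\xi}^{-1}k_{0}+\int_{0}^{\tau}\left(-\tfrac{3}{4}k^{2}-\tfrac{1}{6}\partial_{\xi}^{2}k\right)$, which lies in $\mathcal{C}([0,T];L^{2}(\R))$ by your bounds, and check by differentiating in $\xi$ and using \eqref{kdv_eq} that $\partial_{\xi}K(\tau)=k(\tau)$; this shows $k(\tau)\in\partial_{x}H^{s+1}(\R)$ with $\partial_{x}^{-1}k=K$ and yields the stated estimate. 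With that half-line added, the argument is complete.
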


\noindent Then, proceeding as in the previous part, we obtain the following theorem.

\begin{thm}
\noindent \noindent Let $k^{0} \in \partial_{x} H^{9}(\R)$, such that such that $1+\epsilon k^{0} \geq h_{\min} > 0$, $v^{0} \in H^{5}(\R)$ and $\mu_{0} > 0$. Then, there exists a time $T > 0$, such that for all $0 < \mu \leq \mu_{0}$, we have

\medskip

\noindent (i) a unique classical solution \upshape$\left(\zeta_{B},u_{B}, v_{B} \right)$ \itshape of \eqref{boussi_kdv} with initial data \upshape$\left(k^{0}, k^{0}, \mu v^{0} \right)$ \itshape on $\left[ 0, \frac{T}{\mu} \right]$.

\medskip

\noindent (ii)  a unique classical solution \upshape$k$ \itshape of \eqref{kdv_eq} with initial data \upshape$k^{0}$ \itshape on $\left[ 0, T \right]$.

\medskip 

\noindent (iii) If we define $\left(\zeta_{KdV}, u_{KdV} \right)(t,x) = \left( k(x-t,\mu t),k(x-t,\mu t ) \right)$ we have the following error estimate for all $0 \leq t \leq \frac{T}{\mu}$,

\begin{equation*}
\lver \left(\zeta_{B},u_{B} \right) - \left(\zeta_{KdV},u_{KdV} \right) \rver_{L^{\infty}([0,t] \times \R)} \leq  C \left( \frac{\mu t}{1+t} + \mu^{2} t \right)
\end{equation*}
\itshape

\noindent where $C = C\left(T, \frac{1}{h_{\min}}, \mu_{0}, \lver \partial_{x}^{-1} k^{0} \rver_{H^{9}}, \lver v^{0} \rver_{H^{5}} \right)$.

\end{thm}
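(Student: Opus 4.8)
The plan is to run the same WKB (multiscale) scheme as in the proof of Theorem \ref{full_just_ostrov}, now for System \eqref{boussi_kdv}, exploiting the weaker Coriolis scaling $\frac{\epsilon}{\text{Ro}} = \mu$: this pushes the entire Coriolis contribution into the $\mu^{2}$ residual and leaves the leading order governed by a genuine KdV equation. Points (i) and (ii) are immediate. For (i), Corollary \ref{existence_weak_rot_boussi} applies to the initial data $(k^{0}, k^{0}, \mu v^{0})$, which lies in the required space since $k^{0} \in \partial_{x} H^{9}(\R) \subset H^{8}(\R)$ and $\mu v^{0} \in H^{5}(\R)$, yielding a solution on $[0, T/\mu]$. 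For (ii), Proposition \ref{existence_kdv} provides the KdV solution and, crucially, the persistence $k \in \mathcal{C}([0,T]; \partial_{x} H^{9}(\R))$, so that $\partial_{x}^{-1} k$ is well-defined and bounded in $H^{9}$ throughout.

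For the error estimate I would first fix the correctors by imposing $R_{(1)}^{1} = R_{(1)}^{2} = R_{(1)}^{3} = 0$. This forces $v_{(1)} = v^{0} - \partial_{x}^{-1} k^{0} + \partial_{x}^{-1} k$ and, writing $w_{\pm} = \zeta_{(1)} \pm u_{(1)}$, the two transport equations displayed after \eqref{boussi_kdv_app}. The essential point is to rule out secular (linear in $t$) growth of $\zeta_{(1)}, u_{(1)}$ over the long time $t \leq T/\mu$. The source in the $w_{+}$ equation propagates at the same speed $+1$ as the operator $\partial_{t} + \partial_{x}$; by Lemma \ref{control_diff_speed} this resonant term must vanish, which is exactly the requirement that $k$ solve the KdV equation \eqref{kdv_eq}. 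The source in the $w_{-}$ equation is $(k \partial_{\xi} k - \frac{1}{3} \partial_{\xi}^{3} k)(x-t,\tau) = \partial_{\xi}(\frac{1}{2} k^{2} - \frac{1}{3} \partial_{\xi}^{2} k)(x-t,\tau)$, which propagates at speed $+1$ against the transport speed $-1$ and is an exact $x$-derivative whose antiderivative lies in $L^{2}$. The second part of Lemma \ref{control_diff_speed} then gives $\lver \zeta_{(1)} \rver_{2} + \lver u_{(1)} \rver_{2} \leq C \frac{t}{1+t}$, and the same bound for all $x$- and $\tau$-derivatives by differentiating the transport equations.

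With these estimates I would control the remainders $R_{1}, R_{2}, R_{3}$ defined after \eqref{boussi_kdv_app} in $H^{2}$. They involve $\zeta_{(1)}, u_{(1)}, v_{(1)}$ together with up to three spatial and one slow-time derivative; using that $\partial_{\tau} k$ is expressed through KdV and that $v_{(1)}$ is controlled via $\partial_{x}^{-1} k$, one obtains $\lver R_{1} \rver_{H^{2}} + \lver R_{2} \rver_{H^{2}} + \lver R_{3} \rver_{H^{2}} \leq C(\frac{t}{1+t} + \mu t + 1) \leq C$, the last inequality holding because $\mu t \leq T$ on the interval. Thus $(\zeta_{app}, u_{app}, v_{app})$ solves \eqref{boussi_kdv} up to a residual of size $\mu^{2} R$, and the stability Proposition \ref{stability_boussi} (applied in the weak-rotation form of Remark \ref{weak_rot_boussi}, with vanishing initial difference) gives $\lver (\zeta_{B}, u_{B}, v_{B}) - (\zeta_{app}, u_{app}, v_{app}) \rver_{L^{\infty}([0,t] \times \R)} \leq C \mu^{2} t$. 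Finally, the difference between the approximate solution and the leading KdV profile is exactly $\mu(\zeta_{(1)}, u_{(1)})$, of size $\mu \frac{t}{1+t}$; adding the two contributions yields the claimed bound $C(\frac{\mu t}{1+t} + \mu^{2} t)$.

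The main obstacle is the secular-growth control of the correctors, which is precisely what dictates the KdV equation and is resolved by Lemma \ref{control_diff_speed}. The one point requiring genuine care is the derivative bookkeeping: the term $\partial_{\xi}^{3} \partial_{\tau} k$ appearing in $R_{2}$ costs, via KdV, six derivatives on $k$, so that $R_{2} \in H^{2}$ forces $k \in H^{8}$; this is exactly why the hypothesis is $k^{0} \in \partial_{x} H^{9}(\R)$ (equivalently $\partial_{x}^{-1} k^{0} \in H^{9}$, preserved by Proposition \ref{existence_kdv}), and likewise $v^{0} \in H^{5}(\R)$ is the minimal regularity making $R_{3} \in H^{2}$. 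Everything else is a line-by-line repetition of the Ostrovsky argument with $\sqrt{\mu}$ replaced by $\mu$, which is what improves the residual order from $\mu^{3/2}$ to $\mu^{2}$ and produces the cleaner final estimate.
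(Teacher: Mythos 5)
Your proposal is correct and follows essentially the same route as the paper, which itself proves this theorem by saying "proceeding as in the previous part": existence via Corollary \ref{existence_weak_rot_boussi} and Proposition \ref{existence_kdv}, correctors fixed by cancelling the $\mathcal{O}(\mu)$ residuals, secular growth ruled out by Lemma \ref{control_diff_speed} (which forces the KdV equation), remainder bounds in $H^{2}$, and the stability Proposition \ref{stability_boussi} combined with the $\mu\,\frac{t}{1+t}$ corrector size to get $C\left(\frac{\mu t}{1+t}+\mu^{2}t\right)$. Your bookkeeping of the weaker Coriolis scaling (residual improved from $\mu^{3/2}$ to $\mu^{2}$, only one antiderivative of $k$ needed) matches the paper's intent.
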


\noindent This theorem, combined with Theorem \ref{boussi_water_wave_compare}, shows that the solutions of the water waves equations \eqref{castro_lannes_formulation} is well approximated over times $\mathcal{O} \left(\frac{1}{\mu} \right)$ with an accuracy of order $\mathcal{O} \left( \mu \right)$ by the KdV approximation if we have a very small Coriolis forcing. Notice that contrary to the irrotational case, the transverse velocity $v$ is not zero (also noticed in  \cite{germain-renouard}). Furthermore, in our situation,  the initial data for the KdV equation has to be of zero mean which means that we can not expect the propagation of  solitons on a large time (they have a constant sign) if $\frac{\epsilon}{\text{Ro}}$ and $\mu$ have the same order.

\section{Green-Naghdi equations for $\gamma = 0$ and $\beta = \mathcal{O} \left( \mu \right)$}\label{derive_gn}

\noindent This part is devoted to the derivation and justification of the Green-Naghdi equations \eqref{green_naghdi} under a Coriolis forcing, with $\gamma = 0$ and for small amplitude topography variations ($\beta = \mathcal{O}(\mu)$). The Green-Naghdi equations are originally obtained in the irrotational framework under the assumption that $\mu$ is small (no assumption on $\epsilon$) and by neglecting all the terms of order $\mathcal{O}(\mu^{2})$ in the water waves equations (see for instance \cite{green_naghdi_uneven_bott} or Part 5.1.1.2 in \cite{Lannes_ww}). It is a system of two equations on the surface $\zeta$ and the averaged horizontal velocity $\overline{\textbf{V}}$. These equations were generalized in \cite{Castro_Lannes_shallow_water} in presence of vorticity but without a Coriolis forcing. This new system is a cascade of equations that involves a second order tensor and a third order tensor. After deriving these equations, we show that they are an order $\mathcal{O}(\mu^{2})$ approximation of the water waves equations. We consider the asymptotic regime for the 1D Green-Naghdi equations

\begin{equation}\label{GN_regime}
\mathcal{A}_{\text{GN}} = \left\{ \left( \epsilon, \beta, \gamma, \mu, \text{Ro} \right), 0 \leq \mu \leq \mu_{0}, 0 \leq \epsilon, \frac{\epsilon}{\text{Ro}} \leq 1, \beta = \mathcal{O} \left( \mu \right), \gamma = 0 \right\}.
\end{equation}

\noindent The next subsection is devoted to extending Proposition \ref{eq_Qx} and \ref{eq_Qy}.

\subsection{Improvements for the equations of $\text{Q}_{x}$ and $\text{Q}_{y}$}\label{improve_Q}

\noindent We start by extending Proposition \ref{eq_Qx}.

\begin{prop}\label{eq_Qx_improve}
If \upshape $\left(\zeta, \Umuszero, \bm{\omega} \right)$ \itshape satisfy the Castro-Lannes system \eqref{castro_lannes_formulation}, then $\text{Q}_{x}$ satisfies the following equation

\upshape
\begin{equation*}
\begin{aligned}
\partial_{t} Q_{x} + \epsilon \overline{u} \partial_{x} Q_{x} + \epsilon Q_{x} \partial_{x} \overline{u} + \frac{\epsilon}{\text{Ro} \sqrt{\mu}} \left(\underline{v} - \overline{v} \right) = & - \epsilon \sqrt{\mu} \frac{1}{h} \partial_{x} \int_{-1+\beta b}^{\epsilon \zeta} \left(\uastsh \right)^{2}\\
&+ \epsilon \sqrt{\mu} \text{Q}_{x} \partial_{x} \text{Q}_{x} + \epsilon \mu \frac{1}{3} \partial_{x} \left( h^{2} \text{Q}_{x} \partial_{x}^{2} \overline{u} \right)\\
&+ \epsilon \mu \frac{1}{6} h^{2} u^{\sharp} \partial_{x}^{3} \overline{u} + \epsilon \mu \frac{1}{8h} \partial_{x} \left( h^{3} u^{\sharp} \right) \partial_{x}^{2} \overline{u}\\
&+ \epsilon \max \left( \beta \sqrt{\mu}, \mu^{\frac{3}{2}} \right) R,
\end{aligned}
\end{equation*}
\itshape

\noindent and $\uastsh$ satisfies the equation

\upshape
\begin{equation*}
\begin{aligned}
\partial_{t} \uastsh + \epsilon \overline{u} \partial_{x} \uastsh + \epsilon \uastsh \partial_{x} \overline{u} + \frac{\epsilon}{\text{Ro} \sqrt{\mu}} \left(\overline{v} - v \right) = & \epsilon \sqrt{\mu} \frac{1}{h} \partial_{x} \int_{-1+\beta b}^{\epsilon \zeta} \left(\uastsh \right)^{2} - \epsilon \sqrt{\mu} \uastsh \partial_{x} \uastsh\\
& + \epsilon \partial_{x} \left(\int_{-1+\beta b}^{z} \left[\overline{u} + \sqrt{\mu} \uastsh \right] \right) \partial_{z} \uastsh\\ 
&+ \epsilon \mu R.
\end{aligned}
\end{equation*}
\itshape

\end{prop}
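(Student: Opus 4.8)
The plan is to run the same two successive integrations in $z$ as in the proof of Proposition~\ref{eq_Qx}, but now retaining the contributions of order $\epsilon\sqrt{\mu}$ and $\epsilon\mu$ that were legitimately absorbed into $\mu^{3/2}R$ there because the Boussinesq regime forced $\epsilon=\mathcal{O}(\mu)$. In the Green-Naghdi regime $\mathcal{A}_{\text{GN}}$ there is no smallness assumption on $\epsilon$, so these terms survive and must be tracked explicitly. I would start exactly as before, from the $y$-component of the vorticity equation in \eqref{castro_lannes_formulation}, namely $\partial_t \bm\omega_y + \epsilon u\,\partial_x\bm\omega_y + \tfrac{\epsilon}{\mu}\text{w}\,\partial_z\bm\omega_y = \epsilon\bm\omega_x\partial_x v + \tfrac{\epsilon}{\sqrt\mu}\bm\omega_z\partial_z v + \tfrac{\epsilon}{\text{Ro}\sqrt\mu}\partial_z v$, and use $\bm\omega_x = -\tfrac{1}{\sqrt\mu}\partial_z v$, $\bm\omega_z=\partial_x v$ to cancel the vortex-stretching terms on the right exactly. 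This first reduction is purely algebraic and introduces no error.

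The difference with Proposition~\ref{eq_Qx} enters at the substitution of the velocity expansions. Rather than truncating $u$ to $\overline u$ and $\tfrac1\mu \text{w}$ to $-\partial_x[(1+z-\beta b)\overline u]$, I would insert the finer expansion of Proposition~\ref{equation_u}, keeping $u = \overline u + \sqrt\mu\,\uastsh + \mu T^\ast\overline u + \mu^{3/2}R$ together with the exact identity $\text{w} = -\mu\,\partial_x\!\left(\int_{-1+\beta b}^{z} u\right)$, so that $\tfrac1\mu \text{w}\,\partial_z\bm\omega_y = -\partial_x\!\left(\int_{-1+\beta b}^{z}[\overline u + \sqrt\mu\,\uastsh]\right)\partial_z\bm\omega_y + \mu R$. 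Integrating once in $z$ from $z$ to $\epsilon\zeta$, using $u_{\text{sh}} = -\int_z^{\epsilon\zeta}\bm\omega_y$ and the mass equation $\partial_t\zeta + \partial_x(h\overline u)=0$ from Proposition~\ref{mean_eq}, produces the announced equation for $\uastsh$: the convective term $\epsilon\,\partial_x\!\left(\int_{-1+\beta b}^z[\overline u+\sqrt\mu\,\uastsh]\right)\partial_z\uastsh$ is the surviving $\mathcal{O}(\epsilon)$ and $\mathcal{O}(\epsilon\sqrt\mu)$ correction kept verbatim, while the quadratic terms $-\epsilon\sqrt\mu\,\uastsh\partial_x\uastsh$ and $\epsilon\sqrt\mu\,\tfrac1h\partial_x\int(\uastsh)^2$ arise from the self-interaction of the shear part after projecting onto the fluctuation $(\,\cdot\,)^\ast$; everything else is of size $\epsilon\mu$ and goes into the remainder.

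To obtain the equation for $Q_x$ I would integrate a second time in $z$ (average over the water column), again invoking $\partial_t\zeta+\partial_x(h\overline u)=0$ and the definition $\textbf{Q}=\overline{\textbf{V}_{\text{sh}}}$. The $\epsilon\sqrt\mu$ quadratics collapse to $\epsilon\sqrt\mu\,Q_x\partial_x Q_x$ and $-\epsilon\sqrt\mu\,\tfrac1h\partial_x\int(\uastsh)^2$ after averaging, whereas carrying the expansion one order further now forces the $\mu T^\ast\overline u$ contribution into play; since $T^\ast\overline u=-\tfrac12\bigl([z+1-\beta b]^2-\tfrac{h^2}{3}\bigr)\partial_x^2\overline u+\beta R$ carries $\partial_x^2\overline u$ through the weight $(1+z-\beta b)^2$, it generates precisely the three $\epsilon\mu$ terms with coefficients $\tfrac13$, $\tfrac16$ and $\tfrac18$. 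The bookkeeping of these second-order terms is the main obstacle: each requires a double integration by parts against $(1+z-\beta b)^2$, after which Lemma~\ref{udiese_T_eq} and the definition \eqref{u_diese} of $\textbf{V}^\sharp$ must be used to rewrite the moments $\overline{T u_{\text{sh}}^\ast}$ and $\int_{-1+\beta b}^{\epsilon\zeta}(1+z-\beta b)^2\uastsh$ in terms of $u^\sharp$. One then has to verify that every contribution proportional to $\partial_x b$ is of size $\beta$ and can be consigned to the $\epsilon\max(\beta\sqrt\mu,\mu^{3/2})R$ remainder, exactly as in Part~5.4.1 of \cite{Castro_Lannes_shallow_water}.
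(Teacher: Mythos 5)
Your plan is correct and coincides with the paper's own proof: start from the $y$-component of the vorticity equation, cancel the stretching terms exactly via $\bm{\omega}_x=-\tfrac{1}{\sqrt{\mu}}\partial_z v$, $\bm{\omega}_z=\partial_x v$, substitute the refined expansion of Proposition \ref{equation_u} (keeping the $\epsilon\sqrt{\mu}$ and, for the $Q_x$ equation, the $\epsilon\mu$ corrections coming from $\uastsh$ and $T^{\ast}\overline{u}$), integrate twice in $z$ using $\partial_t\zeta+\partial_x(h\overline{u})=0$ and $\bm{\omega}_y=\partial_z u_{\text{sh}}$, and rewrite the resulting moments of $\uastsh$ through the definition \eqref{u_diese} of $u^{\sharp}$, consigning all $\partial_x b$ contributions to the $\epsilon\max(\beta\sqrt{\mu},\mu^{3/2})R$ remainder. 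The only presentational difference is that you obtain the $\uastsh$ equation first and then average, while the paper derives the $Q_x$ equation first and deduces the $\uastsh$ one from $\uastsh=u_{\text{sh}}-Q_x$; just make sure that when you average you work with the version of the first integration that still retains the $\mathcal{O}(\epsilon\mu)$ terms, as you indicate.
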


\begin{proof}
\noindent Using the second equation of the vorticity equation of the Castro-Lannes system \eqref{castro_lannes_formulation}, we have

\begin{equation*}
\partial_{t} \bm{\omega}_{y} + \epsilon u \partial_{x} \bm{\omega}_{y} + \frac{\epsilon}{\mu} \text{w} \partial_{z}  \bm{\omega}_{y} = \epsilon \bm{\omega}_{x} \partial_{x} v + \frac{\epsilon}{\sqrt{\mu}} \bm{\omega}_{z} \partial_{z} v + \frac{\epsilon}{\text{Ro} \sqrt{\mu}} \partial_{z} v.
\end{equation*}

\noindent Since $\bm{\omega}_{x} = -\frac{1}{\sqrt{\mu}} \partial_{z} v$ and $\bm{\omega}_{z} = \partial_{x} v$ we notice that $\epsilon \bm{\omega}_{x} \partial_{x} v + \frac{\epsilon}{\sqrt{\mu}} \bm{\omega}_{z} \partial_{z} v = 0$. Using Proposition \ref{equation_u} we get

\begin{equation*}
\partial_{t} \bm{\omega}_{y} + \epsilon \overline{u} \partial_{x} \bm{\omega}_{y} - \epsilon \partial_{x} \left[ \left(1 \hspace{-0.05cm} + \hspace{-0.05cm} z \hspace{-0.05cm} - \hspace{-0.05cm} \beta b \right) \hspace{-0.05cm} \overline{u} \right] \partial_{z}  \bm{\omega}_{y} - \frac{\epsilon}{\text{Ro} \sqrt{\mu}} \partial_{z} v + \epsilon \sqrt{\mu} A_{1} + \epsilon \mu A_{2} \hspace{-0.05cm} = \hspace{-0.05cm} \epsilon \hspace{-0.05cm} \max \hspace{-0.05cm} \left(\hspace{-0.05cm} \mu^{\frac{3}{2}},\beta \sqrt {\mu} \hspace{-0.05cm} \right) \hspace{-0.1cm} R,
\end{equation*}

\noindent where 

\begin{equation*}
\begin{aligned}
&A_{1} = u_{\text{sh}}^{\ast} \partial_{x} \bm{\omega}_{y} - \partial_{x} \left( \int_{-1+\beta b}^{z} u_{\text{sh}}^{\ast} \right) \partial_{z} \bm{\omega}_{y},\\
&A_{2} = - \frac{1}{2} \left( \hspace{-0.05cm} \left[1+z-\beta b\right]^{2} \hspace{-0.1cm} - \hspace{-0.1cm} \frac{h^{2}}{3} \hspace{-0.05cm} \right) \partial_{x}^{2} \overline{u} \partial_{x} \bm{\omega}_{y} \hspace{-0.1cm} + \hspace{-0.1cm} \frac{1}{2} \partial_{x} \left( \int_{-1+\beta b}^{z}  \left(\left[1+z-\beta b\right]^{2}- \frac{h^{2}}{3} \right) \partial_{x}^{2} \overline{u} \right) \partial_{z} \bm{\omega}_{y}.
\end{aligned}
\end{equation*}

\noindent Then, integrating with respect to $z$, using the fact that $\partial_{t} \zeta + \partial_{x} \left( h \overline{u} \right) = 0$ and $u_{\text{sh}} = - \int_{z}^{\epsilon \zeta} \bm{\omega}_{y}$, we get

\begin{equation*}
\begin{aligned}
\partial_{t} u_{\text{sh}} + \epsilon \overline{u} \partial_{x} u_{\text{sh}} + \epsilon u_{\text{sh}} \partial_{x} \overline{u} + \frac{\epsilon}{\text{Ro} \sqrt{\mu}} \left(\underline{v} - v \right) = &\epsilon \partial_{x} \left[ \left(1+z - \beta b \right) \overline{u} \right] \partial_{z}  u_{\text{sh}} + \epsilon \sqrt{\mu} \int_{z}^{\epsilon \zeta} A_{1}\\
& + \epsilon \mu \int_{z}^{\epsilon \zeta} A_{2} + \epsilon \max \left(\mu^{\frac{3}{2}},\beta \sqrt{\mu} \right) \hspace{-0.1cm} R.
\end{aligned}
\end{equation*}

\noindent Integrating again with respect to $z$, using the fact that $\partial_{t} \zeta + \partial_{x} \left( h \overline{u} \right) = 0$ and $Q_{x} = \overline{u_{\text{sh}}^{\ast}}$, we obtain

\begin{equation*}
\begin{aligned}
\partial_{t} Q_{x} + \epsilon \overline{u} \partial_{x} Q_{x} + \epsilon Q_{x} \partial_{x} \overline{u} + \frac{\epsilon}{\text{Ro} \sqrt{\mu}} \left(\underline{v} - \overline{v} \right) =& \epsilon \sqrt{\mu} \frac{1}{h} \int_{-1+\beta b}^{\epsilon \zeta} \int_{z}^{\epsilon \zeta} A_{1}\\
&\hspace{-0.2cm} + \hspace{-0.1cm} \epsilon \mu \frac{1}{h} \int_{-1+\beta b}^{\epsilon \zeta} \int_{z}^{\epsilon \zeta} A_{2} + \hspace{-0.05cm} \epsilon \hspace{-0.05cm} \max \left(\mu^{\frac{3}{2}},\beta \sqrt{\mu} \right) \hspace{-0.1cm} R.
\end{aligned}
\end{equation*}

\noindent The end of the proof is devoted to the computation of the others terms. We have

\begin{equation*}
\begin{aligned}
\int_{z}^{\epsilon \zeta} A_{1} &= \int_{z}^{\epsilon \zeta} u_{\text{sh}}^{\ast} \partial_{x} \bm{\omega}_{y} - \partial_{x} \left( \int_{-1+\beta b}^{z} u_{\text{sh}}^{\ast} \right) \partial_{z} \bm{\omega}_{y}\\
&= \int_{z}^{\epsilon \zeta} \partial_{x} \left(\uastsh \bm{\omega}_{y} \right) - \epsilon \zeta \text{Q}_{x} \underline{\bm{\omega}_{y}} + \partial_{x} \left( \int_{-1+\beta b}^{z} \uastsh \right) \bm{\omega}_{y}.
\end{aligned}
\end{equation*}

\noindent Since $\bm{\omega}_{y} = \partial_{z} \uastsh$, we obtain

\begin{equation*}
\int_{z}^{\epsilon \zeta} A_{1} = \text{Q}_{x} \partial_{x} \text{Q}_{x} - \uastsh \partial_{x} \uastsh + \partial_{x} \left( \int_{-1+\beta b}^{z} \uastsh \right) \partial_{z} \uastsh.
\end{equation*}

\noindent then, integrating  again with respect to $z$, we obtain

\begin{equation*}
\frac{1}{h} \int_{-1+\beta b}^{\epsilon \zeta} \int_{z}^{\epsilon \zeta} A_{1} = \text{Q}_{x} \partial_{x} \text{Q}_{x} - \frac{1}{h} \partial_{x} \int_{-1+\beta b}^{\epsilon \zeta} \left(\uastsh \right)^{2}.
\end{equation*}

\noindent Furthermore, we have

\begin{equation*}
\begin{aligned}
\int_{z}^{\epsilon \zeta} A_{2} &= - \frac{1}{2} \int_{z}^{\epsilon \zeta} \left( \left[1+z'-\beta b\right]^{2} - \frac{h^{2}}{3} \right) \partial_{x}^{2} \overline{u} \partial_{x} \bm{\omega}_{y}\\
&\hspace{0.4cm} + \frac{1}{2} \int_{z}^{\epsilon \zeta} \partial_{x} \left( \int_{-1+\beta b}^{z}  \left(\left[1+z'-\beta b\right]^{2}- \frac{h^{2}}{3} \right) \partial_{x}^{2} \overline{u} \right) \partial_{z} \bm{\omega}_{y}\\
&= - \frac{1}{2} \int_{z}^{\epsilon \zeta} \partial_{x} \left[ \left( \left[1+z'-\beta b\right]^{2} - \frac{h^{2}}{3} \right) \partial_{x}^{2} \overline{u} \bm{\omega}_{y} \right] - \epsilon \partial_{x} \zeta \frac{h^{2}}{3} \partial_{x}^{2} \overline{u} \underline{\bm{\omega}_{y}}\\
&\hspace{0.4cm} - \frac{1}{2} \partial_{x} \left(\int_{-1+\beta b}^{z} \left( \left[1+z'-\beta b\right]^{2} - \frac{h^{2}}{3} \right) \partial_{x}^{2} \overline{u} \right) \bm{\omega}_{y}.
\end{aligned}
\end{equation*}

\noindent Since $\bm{\omega}_{y} = \partial_{z} \uastsh$, we obtain

\begin{equation*}
\begin{aligned}
\int_{z}^{\epsilon \zeta} A_{2} = &\int_{z}^{\epsilon \zeta} \partial_{x} \left( \left[1+z' - \beta b \right] \partial_{x}^{2} \overline{u} \uastsh \right) + \frac{1}{2} \partial_{x} \left( \left( \left[1+z'-\beta b\right]^{2} - \frac{h^{2}}{3} \right) \partial_{x}^{2} \overline{u} \uastsh \right) \\
&- \frac{1}{2} \partial_{x} \left(\int_{-1+\beta b}^{z} \left( \left[1+z'-\beta b\right]^{2} - \frac{h^{2}}{3} \right) \partial_{x}^{2} \overline{u} \right) \partial_{z} \uastsh\\
&+ \frac{1}{3} \partial_{x} \left(h^{2} \partial_{x}^{2} \overline{u} \text{Q}_{x} \right) - \epsilon \partial_{x} \zeta h \partial_{x}^{2} \overline{u} \text{Q}_{x}.
\end{aligned}
\end{equation*}

\noindent Then we integrate again with respect to $z$ and we divide $h$. We obtain

\begin{equation*}
\begin{aligned}
 \frac{1}{h} \int_{-1+\beta b}^{\epsilon \zeta} \int_{z}^{\epsilon \zeta} A_{2} = & \frac{1}{h} \int_{-1+ \beta b}^{\epsilon \zeta} \int_{z}^{\epsilon \zeta} \partial_{x} \left( \left[1+z'-\beta b \right] \partial_{x}^{2} \overline{u} \uastsh \right)\\
 & + \frac{1}{2h} \int_{-1+ \beta b}^{\epsilon \zeta} \partial_{x} \left( \left(\left[1+z'-\beta b \right]^{2} - \frac{h^{2}}{3} \right) \partial_{x}^{2} \overline{u} \uastsh \right)\\
 & + \frac{1}{2h} \int_{-1+ \beta b}^{\epsilon \zeta} \partial_{x} \left( \left(\left[1+z'-\beta b \right]^{2} - \frac{h^{2}}{3} \right) \partial_{x}^{2} \overline{u} \right) \uastsh \\
 & + \frac{1}{3} \partial_{x} \left(h^{2} \partial_{x}^{2} \overline{u} \text{Q}_{x} \right) - \frac{4}{3} h \partial_{x} h \partial_{x}^{2} \overline{u} \text{Q}_{x} + \beta R.
\end{aligned}
\end{equation*}

\noindent Then, using the fact that

\begin{equation*}
\int_{-1+\beta b}^{\epsilon \zeta} \int_{z}^{\epsilon \zeta} \int_{-1+\beta b}^{z'} \partial_{x} \uastsh = \partial_{x} \int_{-1+\beta b}^{\epsilon \zeta} \int_{z}^{\epsilon \zeta} \int_{-1+\beta b}^{z'} \uastsh + \beta R,
\end{equation*}

\noindent we finally get

\begin{equation*}
\begin{aligned}
 \frac{1}{h} \int_{-1+\beta b}^{\epsilon \zeta} \int_{z}^{\epsilon \zeta} A_{2} = \frac{1}{3} \partial_{x} \left( h^{2} \text{Q}_{x} \partial_{x}^{2} \overline{u} \right) + \frac{1}{6} h^{2} u^{\sharp} \partial_{x}^{3} \overline{u} + \frac{1}{8h} \partial_{x} \left( h^{3} u^{\sharp} \right) \partial_{x}^{2} \overline{u} + \beta R,
\end{aligned}
\end{equation*}

\noindent and the first equation follows. The second equation follows similarly using the fact that $\uastsh = u_{\text{sh}} - \text{Q}_{x}$.
\end{proof}

\noindent We can also extend Proposition \ref{eq_Qy}.

\begin{prop}\label{eq_Qy_improve}
If \upshape $\left(\zeta, \Umuszero, \bm{\omega} \right)$ \itshape satisfy the Castro-Lannes system \eqref{castro_lannes_formulation}, then $\text{Q}_{x}$ satisfies the following equation

\upshape
\begin{equation*}
\begin{aligned}
\partial_{t} \text{Q}_{y}  + \epsilon \overline{u} \partial_{x} \text{Q}_{y} \hspace{-0.1cm} + \epsilon \text{Q}_{x} \partial_{x} \overline{v} + \frac{\epsilon}{\text{Ro} \sqrt{\mu}} \left( \overline{u}  -  \underline{u} \right) &= \epsilon \sqrt{\mu} \text{Q}_{x} \partial_{x} \text{Q}_{y} - \epsilon \sqrt{\mu} \frac{1}{3} h^{2} \partial_{x}^{2} \overline{u} \partial_{x} \overline{v}\\
&-\epsilon \sqrt{\mu} \frac{1}{h} \partial_{x} \left( \int_{-1+\beta b}^{\epsilon \zeta} \uastsh \vastsh \right)\\
& -\epsilon \mu \left(\partial_{x} h \right)^{2} \text{Q}_{x} \partial_{x} \overline{v} + \epsilon \mu \frac{h^{2}}{3} \partial_{x}^{2} \overline{u} \partial_{x} \text{Q}_{y} \\
& -\epsilon \mu \frac{1}{24 h}  \partial_{x}^{2} \hspace{-0.1cm} \left( \hspace{-0.05cm} h^{3} u^{\sharp} \hspace{-0.05cm} \right) \hspace{-0.1cm} \partial_{x} \overline{v} + \epsilon \mu \frac{1}{24 h} \partial_{x} \hspace{-0.1cm} \left( h^{3} v^{\sharp} \partial^{2}_{x} \overline{u} \right)\\ &+\epsilon \max \left(\mu^{\frac{3}{2}}, \beta \sqrt{\mu} \right) R,
\end{aligned}
\end{equation*}
\itshape

\noindent and $\vastsh$ satisfies the equation

\upshape
\begin{equation*}
\begin{aligned}
\partial_{t} \vastsh + \epsilon \overline{u} \partial_{x} \vastsh + \epsilon \uastsh \partial_{x} \overline{v} + \frac{\epsilon}{\text{Ro} \sqrt{\mu}} \left(u - \overline{u} \right) = & \epsilon \sqrt{\mu} \frac{1}{h} \partial_{x} \left( \int_{-1+\beta b}^{\epsilon \zeta} \uastsh \vastsh \right) - \epsilon \sqrt{\mu} \uastsh \partial_{x} \vastsh\\
& + \epsilon \partial_{x} \left(\int_{-1+\beta b}^{z} \left[\overline{u} + \sqrt{\mu} \uastsh \right] \right) \partial_{z} \vastsh\\ 
&+\epsilon \sqrt{\mu} \frac{1}{2} \left(\left[1+z - \beta b \right]^{2} - \frac{h^{2}}{3} \right) \partial_{x}^{2} \overline{u} \partial_{x} \overline{v} \\
&+ \epsilon \left(\mu, \beta \sqrt{\mu} \right) R.
\end{aligned}
\end{equation*}
\itshape

\end{prop}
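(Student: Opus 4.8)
The plan is to follow exactly the route of the proof of Proposition \ref{eq_Qy}, but now retaining the terms of order $\sqrt{\mu}$ and $\mu$ instead of discarding them at leading order, precisely as was done for $\text{Q}_{x}$ in Proposition \ref{eq_Qx_improve}. First I would start from the first component of the vorticity equation in the Castro-Lannes system \eqref{castro_lannes_formulation},
\begin{equation*}
\partial_{t} \bm{\omega}_{x} + \epsilon u \partial_{x} \bm{\omega}_{x} + \frac{\epsilon}{\mu} \text{w} \partial_{z} \bm{\omega}_{x} = \epsilon \bm{\omega}_{x} \partial_{x} u + \frac{\epsilon}{\sqrt{\mu}} \bm{\omega}_{z} \partial_{z} u + \frac{\epsilon}{\text{Ro} \sqrt{\mu}} \partial_{z} u,
\end{equation*}
and, using $\nabla^{\mu,0} \cdot \bm{\omega} = 0$ together with $\nabla^{\mu,0} \cdot \textbf{U}^{\mu} = 0$, rewrite the advection and stretching terms in the conservative form $\partial_{t} \bm{\omega}_{x} - \frac{\epsilon}{\sqrt{\mu}} \partial_{z} \left( u \bm{\omega}_{z} \right) + \frac{\epsilon}{\mu} \partial_{z} \left( \text{w} \bm{\omega}_{x} \right) = \frac{\epsilon}{\text{Ro} \sqrt{\mu}} \partial_{z} u$, exactly as in the leading-order computation.

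Then I would substitute the expansions of $u$, $v$ and $\text{w}$ from Propositions \ref{equation_u} and \ref{equation_v}, this time keeping the $\sqrt{\mu}\, \uastsh$ and $\mu T^{\ast} \overline{u}$ corrections in $u$ and the $\sqrt{\mu}\, \vastsh$ correction in $v$. This produces, on the right-hand side, correction terms of exactly the same shape as the quantities $A_{1}$ and $A_{2}$ met in the proof of Proposition \ref{eq_Qx_improve}, except that the roles of the $u$- and $v$-shear are now mixed. Integrating once in $z$ (using the surface condition $\partial_{t} \zeta - \frac{1}{\mu} \underline{\textbf{U}}^{\mu} \cdot \textbf{N}^{\mu,0} = 0$) gives an evolution equation for $v_{\text{sh}} = \int_{z}^{\epsilon \zeta} \bm{\omega}_{x}$; integrating a second time and dividing by $h$ then yields $\text{Q}_{y} = \overline{v_{\text{sh}}}$, the surface and bottom boundary contributions being treated with $\textbf{U}_{b}^{\mu} \cdot \textbf{N}_{b}^{\mu,0} = 0$ and $\nabla^{\mu,0} \cdot \textbf{U}^{\mu} = 0$ just as in Proposition \ref{eq_Qy}.

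The main effort, and the step I expect to be the chief obstacle, is the evaluation of the double $z$-integrals of these correction terms. Each one must be put in conservative form using the identities $\bm{\omega}_{x} = - \partial_{z} \vastsh$ and $\bm{\omega}_{z} = \partial_{x} v$ and then integrated by parts in $z$; the cross-interaction $\frac{1}{h} \partial_{x} \int_{-1+\beta b}^{\epsilon \zeta} \uastsh \vastsh$ and the terms carrying $\partial_{x}^{2} \overline{u}$ are converted, via the double integration by parts behind the definition \eqref{u_diese} and Lemma \ref{udiese_T_eq}, into the $u^{\sharp}$- and $v^{\sharp}$-expressions appearing in the statement. Throughout, I would systematically discard everything of size $\mathcal{O}(\mu^{\frac{3}{2}})$ or $\mathcal{O}(\beta \sqrt{\mu})$, invoking Remark \ref{control_remainder} to keep the remainders $R$ bounded uniformly in $\mu$, and collecting the bathymetric contributions into $\beta R$ exactly as in Lemma \ref{udiese_T_eq}.

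Finally, the equation for $\vastsh$ follows by subtraction: writing $\vastsh = v_{\text{sh}} - \text{Q}_{y}$, I would subtract the $\text{Q}_{y}$ equation just obtained from the once-integrated (pointwise in $z$) shear equation, exactly as the $\uastsh$ equation is deduced at the end of the proof of Proposition \ref{eq_Qx_improve}. The surviving terms are the non-averaged analogues, namely $- \epsilon \sqrt{\mu}\, \uastsh \partial_{x} \vastsh$, the advection correction $\epsilon \partial_{x} \left( \int_{-1+\beta b}^{z} [\overline{u} + \sqrt{\mu} \uastsh] \right) \partial_{z} \vastsh$, and the term with the parabolic profile $\frac{1}{2} \left( [1+z-\beta b]^{2} - \frac{h^{2}}{3} \right) \partial_{x}^{2} \overline{u} \partial_{x} \overline{v}$, which is precisely the non-averaged part of the $A_{2}$-type contribution. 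I expect the careful bookkeeping of these mixed $u$-$v$ shear interactions, rather than any conceptual difficulty, to be where attention is most required.
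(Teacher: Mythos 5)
Your proposal is correct and follows essentially the same route as the paper's proof: start from the vorticity-$x$ equation, put it in conservative form using $\nabla^{\mu,0}\cdot\bm{\omega}=0$ and $\nabla^{\mu,0}\cdot\textbf{U}^{\mu}=0$, integrate twice in $z$ with the kinematic surface and bottom conditions to reach $\text{Q}_{y}=\overline{v_{\text{sh}}}$, insert the expansions of $u$, $v$, $\text{w}$ from Propositions \ref{equation_u} and \ref{equation_v} together with Lemma \ref{udiese_T_eq} and the double integration by parts behind \eqref{u_diese} (giving the $u^{\sharp}$- and $v^{\sharp}$-terms via $\frac{1}{2}\int_{-1+\beta b}^{\epsilon\zeta}\vastsh\left(\left[1+z-\beta b\right]^{2}-\frac{h^{2}}{3}\right)=\frac{1}{24}h^{3}v^{\sharp}$), and obtain the $\vastsh$ equation by the subtraction $\vastsh=v_{\text{sh}}-\text{Q}_{y}$. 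The only, immaterial, difference is bookkeeping: the paper integrates the conservative form exactly first, producing the term $\frac{\epsilon}{\sqrt{\mu}}\frac{1}{h}\partial_{x}\left(\int_{-1+\beta b}^{\epsilon\zeta}uv\right)$ and only then expands, whereas you expand first into $A_{1}$/$A_{2}$-type corrections as in Proposition \ref{eq_Qx_improve}.
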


\begin{proof}
\noindent Using the first equation of the vorticity equation of the Castro-Lannes system \eqref{castro_lannes_formulation}, we have

\begin{equation*}
\partial_{t} \bm{\omega}_{x} + \epsilon u \partial_{x} \bm{\omega}_{x} + \frac{\epsilon}{\mu} \text{w} \partial_{z}  \bm{\omega}_{x} = \epsilon \bm{\omega}_{x} \partial_{x} u + \frac{\epsilon}{\sqrt{\mu}} \bm{\omega}_{z} \partial_{z} u + \frac{\epsilon}{\text{Ro} \sqrt{\mu}} \partial_{z} u.
\end{equation*}

\noindent Then, using the fact that $\nabla^{\mu,0} \cdot \bm{\omega} = 0$ and $\nabla^{\mu,0} \cdot \textbf{U}^{\mu,\gamma}  = 0$, we get

\begin{equation*}
\partial_{t} \bm{\omega}_{x} - \frac{\epsilon}{\sqrt{\mu}} \partial_{z} \left( u \bm{\omega}_{z} \right) + \frac{\epsilon}{\mu} \partial_{z} \left( \text{w} \bm{\omega}_{x} \right) = \frac{\epsilon}{\text{Ro} \sqrt{\mu}} \partial_{z} u.
\end{equation*}

\noindent then, we integrate with respect to $z$ and, using the fact that $\partial_{t} \zeta - \frac{1}{\mu} \underline{\textbf{U}}^{\mu} \cdot \textbf{N}^{\mu,0} = 0$, $\bm{\omega}_{x} = - \frac{1}{\sqrt{\mu}} \partial_{z} v$ and $\bm{\omega}_{z} = \partial_{x} v$, we obtain

\begin{equation*}
\partial_{t} \left( \int_{-1+\beta b}^{\epsilon \zeta} \bm{\omega}_{x} \right) - \frac{\epsilon}{\sqrt{\mu}} \underline{u} \partial_{x} \underline{v} + \frac{\epsilon}{\sqrt{\mu}} u \partial_{x} v + \frac{\epsilon}{\mu^{\frac{3}{2}}} \text{w} \partial_{z} v + \frac{\epsilon}{\text{Ro} \sqrt{\mu}} \left(u - \underline{u} \right) = 0.
\end{equation*}

\noindent Then, we integrate again with respect to $z$ and, using Proposition \ref{equation_v} and the fact that $\partial_{t} \zeta - \frac{1}{\mu} \underline{\textbf{U}}^{\mu} \cdot \textbf{N}^{\mu,0} = 0$, $\textbf{U}_{b}^{\mu} \cdot \textbf{N}_{b}^{\mu,0} = 0$, and $\nabla^{\mu,0} \cdot \textbf{U}^{\mu} = 0$, we get

\begin{equation*}
\partial_{t} \text{Q}_{y} - \frac{\epsilon}{\sqrt{\mu}} \underline{u} \partial_{x} \underline{v} + \frac{\epsilon}{\sqrt{\mu}} \frac{1}{h} \partial_{x} \left( \int_{-1+\beta b}^{\epsilon \zeta} u v \right) + \frac{1}{\sqrt{\mu} h} \partial_{t} h \overline{v} + \frac{\epsilon}{\text{Ro} \sqrt{\mu}} \left(\overline{u} - \underline{u} \right) = 0.
\end{equation*}

\noindent Then, thanks to Propositions \ref{mean_eq}, \ref{equation_v} and \ref{equation_u}  we finally obtain that

\begin{equation*}
\begin{aligned}
\partial_{t} \text{Q}_{y} \hspace{-0.1cm} + \hspace{-0.1cm} \epsilon \overline{u} \partial_{x} \text{Q}_{y} \hspace{-0.1cm} + \hspace{-0.1cm} \epsilon \text{Q}_{x} \partial_{x} \overline{v} \hspace{-0.1cm} + \hspace{-0.1cm} \frac{\epsilon}{\text{Ro} \sqrt{\mu}} \hspace{-0.1cm} \left( \overline{u} \hspace{-0.1cm} - \hspace{-0.1cm} \underline{u} \right) \hspace{-0.1cm} &= \epsilon \sqrt{\mu} \text{Q}_{x} \partial_{x} \text{Q}_{y} - \epsilon \sqrt{\mu} \frac{1}{3} h^{2} \partial_{x}^{2} \overline{u} \partial_{x} \overline{v}\\
&-\epsilon \sqrt{\mu} \frac{1}{h} \partial_{x} \left( \int_{-1+\beta b}^{\epsilon \zeta} \uastsh \vastsh \right)\\
&+\epsilon \mu \overline{T \uastsh} \partial_{x} \overline{v} + \epsilon \mu \frac{h^{2}}{3} \partial_{x}^{2} \overline{u} \partial_{x} \text{Q}_{y}\\
&+ \epsilon \mu \frac{1}{2h} \partial_{x} \left( \int_{-1+\beta b}^{\epsilon \zeta} \vastsh \left(\left[1+z - \beta b \right]^{2} - \frac{h^{2}}{3} \right) \partial^{2}_{x} \overline{u} \right)\\
&+\epsilon \max \left(\mu^{\frac{3}{2}}, \beta \sqrt{\mu} \right) R.
\end{aligned}
\end{equation*}

\noindent Finally, we can compute that

\begin{equation*}
\frac{1}{2} \int_{-1+\beta b}^{\epsilon \zeta} \vastsh \left(\left[1+z - \beta b \right]^{2} - \frac{h^{2}}{3} \right) = \frac{1}{24} h^{3} v^{\sharp},
\end{equation*}

\noindent and the first equation follows from Lemma \ref{udiese_T_eq}. The second equation follows similarly using the fact that $\vastsh = v_{\text{sh}} - \text{Q}_{y}$.
\end{proof}

\noindent As noticed in \cite{Castro_Lannes_shallow_water}, the quantity $E$ defined by

\begin{equation}\label{def_E}
E = \begin{pmatrix} E_{xx} & E_{xy} \\ E_{xy} & E_{yy} \end{pmatrix} = \int_{-1+\beta b}^{\epsilon \zeta} \textbf{V}^{\ast}_{\text{sh}} \otimes \textbf{V}^{\ast}_{\text{sh}}
\end{equation}

\noindent appears in the equations of $\text{Q}_{x}$ and $\text{Q}_{y}$ and can not be express with respect to $\zeta$, $\overline{\textbf{V}}$ and $\textbf{V}^{\sharp}$. The following subsection is devoting to giving an equation for $E$.

\subsection{Equations for $E$}

\noindent In this part, we  derive an equation for $E$ up to terms of order $\mathcal{O} (\mu)$. We have to introduce the quantity $F$

\begin{equation}\label{def_F}
F = (F_{ijk})_{i,j,k} = \int_{-1+\beta b}^{\epsilon \zeta} \textbf{V}^{\ast}_{\text{sh}} \otimes \textbf{V}^{\ast}_{\text{sh}} \otimes \textbf{V}^{\ast}_{\text{sh}}.
\end{equation}

\noindent The following proposition gives an equation for $E$.

\begin{prop}\label{eq_E}
If \upshape $\left(\zeta, \Umuszero, \bm{\omega} \right)$ \itshape satisfy the Castro-Lannes system \eqref{castro_lannes_formulation}, then $E$ satisfies the following equation

\upshape
\small
\begin{equation*}
\begin{aligned}
\partial_{t} E \hspace{-0.05cm} + \hspace{-0.05cm} \epsilon \overline{u} \partial_{x} E + \epsilon l \left( E,\partial_{x} \overline{\textbf{V}} \right) \hspace{-0.05cm} + \hspace{-0.05cm} \epsilon \sqrt{\mu} \partial_{x} F_{\cdot,\cdot,1} \hspace{-0.05cm} + \hspace{-0.05cm} \frac{\epsilon}{\text{Ro}} E^{S} = &\left(\epsilon \sqrt{\mu} \partial_{x} \overline{v} + \frac{\epsilon \sqrt{\mu}}{\text{Ro}} \right) \mathcal{D} (\textbf{V}^{\sharp}, \overline{u} )\\
& + \max \left( \epsilon \mu,\epsilon \beta \sqrt{\mu}, \frac{\epsilon}{\text{Ro}} \mu \right) R, 
\end{aligned}
\end{equation*}
\normalsize
\itshape

\noindent where

\upshape
\begin{equation}\label{ES}
E^{S} = \int_{-1+\beta b}^{\epsilon \zeta} \textbf{V}_{\text{sh}}^{\perp} \otimes \textbf{V}_{\text{sh}}  + \textbf{V}_{\text{sh}} \otimes \textbf{V}_{\text{sh}}^{\perp} = \begin{pmatrix} -2 E_{xy} & E_{xx} - E_{yy} \\  E_{xx} - E_{yy} & 2 E_{xy} \end{pmatrix}
\end{equation}
\itshape

\medskip

\upshape
\begin{equation}\label{l_E_u}
l \left( E,\partial_{x} \overline{\textbf{V}} \right) = \begin{pmatrix} 3 \partial_{x} \overline{u} E_{xx} + 2 \partial_{x} \overline{v} E_{xy} & 2 \partial_{x} \overline{u} E_{xy} + \partial_{x} \overline{v} E_{yy} \\ 2 \partial_{x} \overline{u} E_{xy} + \partial_{x} \overline{v} E_{yy} & \partial_{x} \overline{u} E_{yy} \end{pmatrix}
\end{equation}
\itshape

\noindent and

\upshape
\begin{equation}\label{D_E}
 \mathcal{D} (\textbf{V}^{\sharp}, \overline{u} ) =  \partial_{x}^{2} \overline{u} \begin{pmatrix} 0 & u^{\sharp} \\  u^{\sharp} & 2 v^{\sharp} \end{pmatrix}.
\end{equation}
\itshape

\end{prop}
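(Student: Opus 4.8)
The plan is to read off the equation for $E$ directly from its definition \eqref{def_E}, by differentiating each scalar component in time and inserting the evolution equations for the starred shear velocities furnished by Propositions \ref{eq_Qx_improve} and \ref{eq_Qy_improve}. Writing $E_{xx}=\int_{-1+\beta b}^{\epsilon\zeta}(\uastsh)^{2}$, $E_{xy}=\int_{-1+\beta b}^{\epsilon\zeta}\uastsh\vastsh$ and $E_{yy}=\int_{-1+\beta b}^{\epsilon\zeta}(\vastsh)^{2}$, I would first apply $\partial_{t}$ using Leibniz' rule for the moving upper endpoint $\epsilon\zeta$. The endpoint contributions carry a factor $\epsilon\partial_{t}\zeta$, which I rewrite with the mass relation $\partial_{t}\zeta+\partial_{x}(h\overline{u})=0$ together with $\underline{\uastsh}=-\text{Q}_{x}$ and $\underline{\vastsh}=-\text{Q}_{y}$ (from Propositions \ref{equation_v} and \ref{equation_u}); these terms are retained because they recombine with the transport terms of the next step.

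Next I would substitute the equations for $\partial_{t}\uastsh$ and $\partial_{t}\vastsh$. The leading transport terms $\epsilon\overline{u}\partial_{x}\uastsh$ and $\epsilon\overline{u}\partial_{x}\vastsh$, once multiplied by the relevant shear velocity, integrated, and combined with the endpoint terms above through an integration by parts in $x$, assemble into $\epsilon\overline{u}\partial_{x}E$. The stretching terms $\epsilon\uastsh\partial_{x}\overline{u}$ and $\epsilon\uastsh\partial_{x}\overline{v}$ expand, after collecting the three scalar integrals, into exactly the bilinear form $l\bigl(E,\partial_{x}\overline{\textbf{V}}\bigr)$ of \eqref{l_E_u}. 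The Coriolis contributions come out cleanly: since the factor $\frac{\epsilon}{\text{Ro}\sqrt{\mu}}(\overline{v}-v)$ equals $-\frac{\epsilon}{\text{Ro}}\vastsh$ and $\frac{\epsilon}{\text{Ro}\sqrt{\mu}}(u-\overline{u})=\frac{\epsilon}{\text{Ro}}\uastsh+\frac{\epsilon}{\text{Ro}}\sqrt{\mu}R$ (by Propositions \ref{equation_v} and \ref{equation_u}), multiplying by $\uastsh$ or $\vastsh$ produces precisely the starred integrals $E_{xx},E_{xy},E_{yy}$ and hence the antisymmetric tensor $\frac{\epsilon}{\text{Ro}}E^{S}$ of \eqref{ES}, with no spurious $\text{Q}_{x}\text{Q}_{y}$ residue.

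The delicate part is the treatment of the $\mathcal{O}(\sqrt{\mu})$ right-hand sides of Propositions \ref{eq_Qx_improve} and \ref{eq_Qy_improve}. The quadratic pieces $\mp\epsilon\sqrt{\mu}\,\uastsh\partial_{x}\uastsh$, $\epsilon\sqrt{\mu}\frac1h\partial_{x}\!\int(\uastsh)^{2}$, and above all the advective correction $\epsilon\,\partial_{x}\!\bigl(\int_{-1+\beta b}^{z}[\overline{u}+\sqrt{\mu}\uastsh]\bigr)\partial_{z}\uastsh$ must each be multiplied by the appropriate shear velocity, integrated in $z$ with an integration by parts to remove the $\partial_{z}$, and then reorganized. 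I expect the purely shear cubic contributions to collapse into $\epsilon\sqrt{\mu}\,\partial_{x}F_{\cdot,\cdot,1}$ with $F$ as in \eqref{def_F}, using $\int\partial_{x}(\cdots)^{3}=\partial_{x}\!\int(\cdots)^{3}$ modulo endpoint terms controlled by the mass equation. The remaining $\mathcal{O}(\sqrt{\mu})$ terms carrying the curvature $\partial_{x}^{2}\overline{u}$ and the moments $u^{\sharp},v^{\sharp}$ are matched to $\bigl(\epsilon\sqrt{\mu}\,\partial_{x}\overline{v}+\tfrac{\epsilon\sqrt{\mu}}{\text{Ro}}\bigr)\mathcal{D}(\textbf{V}^{\sharp},\overline{u})$ of \eqref{D_E}, which is where the identity $\frac{1}{2}\int_{-1+\beta b}^{\epsilon\zeta}\textbf{V}^{\ast}_{\text{sh}}\bigl([1+z-\beta b]^{2}-\tfrac{h^{2}}{3}\bigr)=\tfrac{1}{24}h^{3}\textbf{V}^{\sharp}$, built into the definition \eqref{u_diese}, is invoked.

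Finally I would sweep every term that is $\mathcal{O}(\mu)$ relative to the retained precision, or that carries an extra factor $\beta$ (small-bottom corrections) or $\frac{\epsilon}{\text{Ro}}\mu$, into the single remainder $\max\bigl(\epsilon\mu,\epsilon\beta\sqrt{\mu},\tfrac{\epsilon}{\text{Ro}}\mu\bigr)R$, the uniform bounds being guaranteed by Remark \ref{control_remainder} and Theorem \ref{existence_ww}. The main obstacle throughout is the algebraic bookkeeping rather than any analytic subtlety: one must verify that the moving-endpoint terms cancel exactly against the transport terms, and that the many scattered cubic shear integrals reorganize precisely into $\partial_{x}F_{\cdot,\cdot,1}$ and $\mathcal{D}(\textbf{V}^{\sharp},\overline{u})$ without leaving an uncontrolled contribution of order $\sqrt{\mu}$.
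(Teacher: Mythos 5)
Your overall strategy is the same as the paper's: compute $\partial_{t}E$ componentwise from the definition \eqref{def_E}, insert the evolution equations for $\uastsh$ and $\vastsh$ (the second equations of Propositions \ref{eq_Qx_improve} and \ref{eq_Qy_improve}), handle the boundary terms with the mass equation, and treat the Coriolis contribution through the expansions of $u$ and $v$ from Propositions \ref{equation_u} and \ref{equation_v}; the reorganization of the cubic shear integrals into $\partial_{x}F_{\cdot,\cdot,1}$ and of the curvature terms into $\mathcal{D}$ via the moment identity is exactly the computation the paper refers to (Castro--Lannes, Parts 4.5.2 and 5.4.1).

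There is, however, one concrete flaw in your handling of the Coriolis term. You truncate $\frac{\epsilon}{\text{Ro}\sqrt{\mu}}\left(u-\overline{u}\right)=\frac{\epsilon}{\text{Ro}}\uastsh+\frac{\epsilon}{\text{Ro}}\sqrt{\mu}R$, but the admissible remainder in the statement is only $\frac{\epsilon}{\text{Ro}}\mu R$, so this truncation is one order too early: you must keep the next term of Proposition \ref{equation_u}, namely $\frac{\epsilon\sqrt{\mu}}{\text{Ro}}\,T^{\ast}\overline{u}$ with $T^{\ast}\overline{u}=-\frac{1}{2}\left(\left[z+1-\beta b\right]^{2}-\frac{h^{2}}{3}\right)\partial_{x}^{2}\overline{u}+\beta R$. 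Integrated against $\uastsh$ and $\vastsh$ and combined with the identity $\frac{1}{2}\int_{-1+\beta b}^{\epsilon\zeta}\textbf{V}^{\ast}_{\text{sh}}\left(\left[1+z-\beta b\right]^{2}-\frac{h^{2}}{3}\right)=\frac{1}{24}h^{3}\textbf{V}^{\sharp}$, this is precisely the source of the $\frac{\epsilon\sqrt{\mu}}{\text{Ro}}\mathcal{D}(\textbf{V}^{\sharp},\overline{u})$ part of the right-hand side; note also that $\overline{v}-v=-\sqrt{\mu}\vastsh$ is exact, which is why the $(1,1)$ entry of $\mathcal{D}$ vanishes and only the $xy$ and $yy$ entries receive a Coriolis correction. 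Your later sentence attributes the whole term $\left(\epsilon\sqrt{\mu}\partial_{x}\overline{v}+\frac{\epsilon\sqrt{\mu}}{\text{Ro}}\right)\mathcal{D}$ to the $\mathcal{O}(\sqrt{\mu})$ right-hand sides of Propositions \ref{eq_Qx_improve} and \ref{eq_Qy_improve}, but those right-hand sides carry no factor $\frac{1}{\text{Ro}}$: only the $\epsilon\sqrt{\mu}\,\partial_{x}\overline{v}\,\mathcal{D}$ half comes from there (from the term $\epsilon\sqrt{\mu}\frac{1}{2}\left(\left[1+z-\beta b\right]^{2}-\frac{h^{2}}{3}\right)\partial_{x}^{2}\overline{u}\,\partial_{x}\overline{v}$ in the $\vastsh$ equation), while the $\frac{\epsilon\sqrt{\mu}}{\text{Ro}}\mathcal{D}$ half must come from the deeper expansion of the Coriolis factor that you discarded. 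As written, your argument would either lose this term or leave an uncontrolled remainder of size $\frac{\epsilon}{\text{Ro}}\sqrt{\mu}$; the fix is simply to expand $u-\overline{u}$ to order $\mu$ in the Coriolis factor before integrating.
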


\begin{proof}
\noindent The proof is similar to the computation in Part 4.5.2 and Part 5.4.1 in \cite{Castro_Lannes_shallow_water}. We compute $\partial_{t} E$ and we use the second equations of Propositions \ref{eq_Qx_improve} and \ref{eq_Qy_improve} up to terms of order $\mathcal{O} (\mu)$. For the Coriolis contribution, we use the expansion of $u$ and $v$ given in Proposition \ref{equation_u} and \ref{equation_v}.
\end{proof}

\noindent The quantity $F$ appears in the equation of $E$ and can not be expressed with respect to $\zeta$, $\overline{\textbf{V}}$, $\textbf{V}^{\sharp}$ and $E$. The next proposition gives an equation for $F$ up to terms of order $\mathcal{O} (\sqrt{\mu})$.

\begin{prop}\label{eq_F}
If \upshape $\left(\zeta, \Umuszero, \bm{\omega} \right)$ \itshape satisfy the Castro-Lannes system \eqref{castro_lannes_formulation}, then $F_{ijk}$ satisfies the following equation

\upshape
\begin{equation*}
\begin{aligned}
\partial_{t} F_{ijk} + \epsilon ( \overline{u} \partial_{x} F_{ijk} + \partial_{x} \overline{u} F_{ijk} + F_{1kj} \partial_{x} \textbf{V}_{i} + F_{i1k} \partial_{x} \textbf{V}_{j} + F_{ij1} \partial_{x} \textbf{V}_{k} ) + &\frac{\epsilon}{\text{Ro}} F^{S} =\\
&\max \left( \epsilon , \frac{\epsilon}{\text{Ro}} \right) \sqrt{\mu} R,
\end{aligned}
\end{equation*}
\itshape

\noindent where

\upshape
\begin{equation}\label{FS}
F^{S} = \int_{-1+\beta b}^{\epsilon \zeta} \textbf{V}_{\text{sh}}^{\perp} \otimes \textbf{V}_{\text{sh}} \otimes \textbf{V}_{\text{sh}} + \textbf{V}_{\text{sh}} \otimes \textbf{V}_{\text{sh}}^{\perp} \otimes \textbf{V}_{\text{sh}} + \textbf{V}_{\text{sh}} \otimes \textbf{V}_{\text{sh}} \otimes \textbf{V}_{\text{sh}}^{\perp}. 
\end{equation}
\itshape

\end{prop}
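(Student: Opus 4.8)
The plan is to derive the evolution of the rank-three tensor $F$ defined in \eqref{def_F} by exactly the same route used for the rank-two tensor $E$ in Proposition \ref{eq_E}, but only to precision $\mathcal{O}(\sqrt{\mu})$. I would start from the second equations of Propositions \ref{eq_Qx_improve} and \ref{eq_Qy_improve}, which govern $\uastsh$ and $\vastsh$; written jointly for $\textbf{V}^{\ast}_{\text{sh}} = (\uastsh,\vastsh)^t$ they have the form of a transport equation along $\epsilon\overline{u}$ carrying a vertical advection/stretching term $\epsilon\partial_x(\int_{-1+\beta b}^z \overline{u})\partial_z\textbf{V}^{\ast}_{\text{sh}}$, a horizontal stretching contribution proportional to $\epsilon\partial_x\overline{\textbf{V}}$, and the Coriolis rotation of size $\frac{\epsilon}{\text{Ro}}$, all modulo remainders of size $\max(\epsilon,\frac{\epsilon}{\text{Ro}})\sqrt{\mu}R$. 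Since the target accuracy for $F$ is only $\mathcal{O}(\sqrt{\mu})$, the many correction terms that were needed for the $\mathcal{O}(\mu)$ equation of $E$ may here be dropped from the outset.

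Then I would compute $\partial_t F_{ijk} = \partial_t \int_{-1+\beta b}^{\epsilon\zeta}(\textbf{V}^{\ast}_{\text{sh}})_i(\textbf{V}^{\ast}_{\text{sh}})_j(\textbf{V}^{\ast}_{\text{sh}})_k$, using the Leibniz rule for the moving upper limit (the boundary term involving $\partial_t\zeta$ and $\underline{\textbf{V}^{\ast}_{\text{sh}}} = -\textbf{Q}$ being handled with $\partial_t\zeta + \partial_x(h\overline{u}) = 0$) and substituting the three copies of the evolution equation into $\partial_t((\textbf{V}^{\ast}_{\text{sh}})_i(\textbf{V}^{\ast}_{\text{sh}})_j(\textbf{V}^{\ast}_{\text{sh}})_k)$. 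Integration by parts in $z$, to move $\partial_z$ off the shear factors in the vertical-advection term, together with the mass relation, reorganizes the horizontal and vertical advection into $\epsilon\overline{u}\partial_x F_{ijk}+\epsilon\partial_x\overline{u}\,F_{ijk}$ and into the vortex-stretching contractions $\epsilon(F_{1kj}\partial_x\textbf{V}_i + F_{i1k}\partial_x\textbf{V}_j + F_{ij1}\partial_x\textbf{V}_k)$, where $\textbf{V}_i$ is the $i$-th component of $\overline{\textbf{V}}$ and the frozen index $1$ reflects that, when $\gamma=0$, only $x$-derivatives of the averaged velocity enter. The Coriolis terms, after replacing $u$ and $v$ by their expansions from Propositions \ref{equation_u} and \ref{equation_v}, assemble into $\frac{\epsilon}{\text{Ro}}F^S$ with $F^S$ as in \eqref{FS}. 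The crucial structural point is that the fourth-moment advection term (the analog of the $\epsilon\sqrt{\mu}\partial_x F_{\cdot,\cdot,1}$ term that survives in the $E$-equation), which would otherwise force a rank-four tensor into the system, is now of size $\epsilon\sqrt{\mu}$ and is absorbed into the remainder; this is exactly what closes the moment cascade at the level of $F$.

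The hard part will be the tensorial bookkeeping rather than any analytic difficulty. In particular, matching the Coriolis contribution precisely to $F^S$, which is written in terms of the full shear velocity $\textbf{V}_{\text{sh}}$ rather than its fluctuation $\textbf{V}^{\ast}_{\text{sh}}$, requires tracking the $\mathcal{O}(1)$ pieces carried by the average $\textbf{Q}=\overline{\textbf{V}_{\text{sh}}}$, using the equations for $\text{Q}_x$ and $\text{Q}_y$ from Propositions \ref{eq_Qx_improve} and \ref{eq_Qy_improve} in tandem with the moment identity, exactly as in Parts 4.5.2 and 5.4.1 of \cite{Castro_Lannes_shallow_water}; getting the contraction pattern $F_{1kj}$, $F_{i1k}$, $F_{ij1}$ in the stretching term correct is the other place demanding care. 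Once these identifications are carried out, collecting terms and discarding everything of size $\max(\epsilon,\frac{\epsilon}{\text{Ro}})\sqrt{\mu}$ yields the stated equation.
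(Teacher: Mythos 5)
Your proposal follows essentially the same route as the paper: compute $\partial_{t} F_{ijk}$ directly from the definition, insert the second equations of Propositions \ref{eq_Qx_improve} and \ref{eq_Qy_improve} for $\uastsh$ and $\vastsh$ retained only up to $\mathcal{O}(\sqrt{\mu})$, handle the boundary and advection terms with $\partial_{t} \zeta + \partial_{x}(h\overline{u}) = 0$ and integration by parts in $z$, and identify the Coriolis contribution as $F^{S}$ via the expansions of $u$ and $v$ from Propositions \ref{equation_u} and \ref{equation_v}, absorbing the fourth-moment term of size $\epsilon\sqrt{\mu}$ into the remainder. This matches the paper's argument (which itself defers the bookkeeping to Parts 4.5.3 and 5.4.2 of the Castro--Lannes reference), so no objection.
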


\begin{proof}
\noindent The proof is similar to the computation in Part 4.5.3 and Part 5.4.2 in \cite{Castro_Lannes_shallow_water}. We compute $\partial_{t} F$ and we use the second equations of Propositions \ref{eq_Qx_improve} and \ref{eq_Qy_improve} up to terms of order $\mathcal{O} (\sqrt{\mu})$. For the Coriolis contribution, we use the expansion of $u$ and $v$ in Proposition \ref{equation_u} and \ref{equation_v}.
\end{proof}

\subsection{The Green-Naghdi equations}

\noindent We can now establish the Green-Naghdi equations when $d=1$. The Green-Naghdi equations are the following system

\small
\begin{equation}\label{green_naghdi}
\left\{
\begin{aligned}
&\partial_{t} \zeta + \partial_{x} \left(h \overline{u} \right) = 0,\\
&\hspace{-0.05cm} \left(\hspace{-0.05cm} 1 \hspace{-0.05cm} + \hspace{-0.05cm} \mu \mathcal{T} \hspace{-0.05cm} \right) \hspace{-0.05cm} \left(\partial_{t} \overline{u} \hspace{-0.05cm} + \hspace{-0.05cm} \epsilon \overline{u} \partial_{x} \overline{u} \hspace{-0.05cm} \right) \hspace{-0.05cm} + \hspace{-0.05cm} \partial_{x} \zeta \hspace{-0.05cm} - \hspace{-0.05cm} \frac{\epsilon}{\text{Ro}} \overline{v} \hspace{-0.05cm} + \hspace{-0.05cm} \epsilon \mu \mathcal{Q}(\overline{u}) \hspace{-0.05cm} +  \hspace{-0.05cm} \epsilon \mu \partial_{x} E_{xx} \hspace{-0.05cm} + \hspace{-0.05cm} \epsilon \mu ^{\frac{3}{2}} \mathcal{C}_{1} \hspace{-0.1cm} \left( u^{\sharp}, \overline{u} \right) + \frac{\epsilon}{\text{Ro}}\frac{\mu^{\frac{3}{2}}}{24 h} \partial_{x}^{2} (h^{3} v^{\sharp}) = 0,\\
&\partial_{t} \overline{v} + \epsilon \overline{u} \partial_{x} \overline{v} + \frac{\epsilon}{\text{Ro}} \overline{u} + \epsilon \mu \partial_{x} E_{xy} + \epsilon \mu ^{\frac{3}{2}} \mathcal{C}_{2} \left( v^{\sharp}, \partial_{x}^{2} \overline{u} \right) = 0,\\
&\partial_{t} \textbf{V}^{\sharp} + \epsilon \textbf{V}^{\sharp} \partial_{x} \overline{u} + \epsilon  \overline{u} \partial_{x} \textbf{V}^{\sharp} + \frac{\epsilon}{\text{Ro}} \textbf{V}^{\sharp \perp} = 0,\\
&\partial_{t} E \hspace{-0.05cm} + \hspace{-0.05cm} \epsilon \overline{u} \partial_{x} E + \epsilon \, l \! \left(E, \partial_{x} \overline{\textbf{V}} \right) \hspace{-0.05cm} + \epsilon \sqrt{\mu} \partial_{x} F_{\cdot, \cdot, 1} + \hspace{-0.05cm} \frac{\epsilon}{\text{Ro}} E^{S}  \hspace{-0.05cm} =  \hspace{-0.05cm} \left( \hspace{-0.05cm} \epsilon   \sqrt{\mu} \partial_{x} \overline{v}  \hspace{-0.05cm} +  \hspace{-0.05cm} \frac{\epsilon}{\text{Ro}}  \sqrt{\mu}  \hspace{-0.05cm} \right)  \hspace{-0.05cm} \mathcal{D} (\textbf{V}^{\sharp}  \hspace{-0.05cm}, \overline{u} ),\\
&\partial_{t} F_{ijk} + \epsilon \overline{u} \partial_{x} F_{ijk} + \epsilon \partial_{x} \overline{u} F_{ijk} + \epsilon F_{1kj} \partial_{x} \textbf{V}_{i} + \epsilon F_{i1k} \partial_{x} \textbf{V}_{j} + \epsilon F_{ij1} \partial_{x} \textbf{V}_{k} + \frac{\epsilon}{\text{Ro}} F^{S} = 0.
\end{aligned}
\right.
\end{equation}
\normalsize

\noindent where 

\begin{equation}\label{op_gn}
\begin{aligned}
&\mathcal{T} = - \frac{1}{3h} \partial_{x} \left(h^{3} \partial_{x} \cdot \right) ,\\
&\mathcal{Q}(\overline{u}) = \frac{2}{3h} \partial_{x} \left(h^{3} \left[ \partial_{x} \overline{u} \right]^{2} \right),\\
&\mathcal{C}_{1} \left( u^{\sharp}, \overline{u} \right) = - \frac{1}{6h} \partial_{x} \left(2 h^{3} u^{\sharp} \partial_{x}^{2} \overline{u} + \partial_{x} (h^{3} u^{\sharp}) \partial_{x} \overline{u} \right),\\
&\mathcal{C}_{2} \left( v^{\sharp}, w \right) = - \frac{1}{24h} \partial_{x} \left(h^{3} v^{\sharp} w \right),\\
&l \left( E,\partial_{x} \overline{\textbf{V}} \right) = \begin{pmatrix} 3 \partial_{x} \overline{u} E_{xx} + 2 \partial_{x} \overline{v} E_{xy} & 2 \partial_{x} \overline{u} E_{xy} + \partial_{x} \overline{v} E_{yy} \\ 2 \partial_{x} \overline{u} E_{xy} + \partial_{x} \overline{v} E_{yy} & \partial_{x} \overline{u} E_{yy} \end{pmatrix},\\
&\mathcal{D} (\textbf{V}^{\sharp}, \overline{u} ) =  \partial_{x}^{2} \overline{u} \begin{pmatrix} 0 & u^{\sharp} \\  u^{\sharp} & 2 v^{\sharp} \end{pmatrix}
\end{aligned}
\end{equation}

\noindent and 

\begin{equation}
\begin{aligned}
&E^{S} = \int_{-1+\beta b}^{\epsilon \zeta} \textbf{V}_{\text{sh}}^{\perp} \otimes \textbf{V}_{\text{sh}}  + \textbf{V}_{\text{sh}} \otimes \textbf{V}_{\text{sh}}^{\perp} = \begin{pmatrix} -2 E_{xy} & E_{xx} - E_{yy} \\  E_{xx} - E_{yy} & 2 E_{xy} \end{pmatrix},\\
&F^{S} = \int_{-1+\beta b}^{\epsilon \zeta} \textbf{V}_{\text{sh}}^{\perp} \otimes \textbf{V}_{\text{sh}} \otimes \textbf{V}_{\text{sh}} + \textbf{V}_{\text{sh}} \otimes \textbf{V}_{\text{sh}}^{\perp} \otimes \textbf{V}_{\text{sh}} + \textbf{V}_{\text{sh}} \otimes \textbf{V}_{\text{sh}} \otimes \textbf{V}_{\text{sh}}^{\perp},
\end{aligned}
\end{equation}

\noindent and $\textbf{V}^{\sharp}$ is defined in \eqref{u_diese}, $E$ in \eqref{def_E} and $F$ in \eqref{def_F}. Notice that the first, the second and the third equations of System \eqref{green_naghdi} are the classical Green-Naghdi equations with new terms due to the vorticity (terms with $\textbf{V}^{\sharp}$ and $E$). The last equations are important to get a close system. We can now state that the Green-Naghdi equations are an order $\mathcal{O}(\mu^{2})$ approximation of the water waves equations.

\begin{prop}
In the Green-Naghdi regime with small topography variations $\mathcal{A}_{\text{GN}}$, the Castro-Lannes equations \eqref{castro_lannes_formulation} are consistent at order $\mathcal{O}(\mu^{2})$ with the Green-Naghdi equations \eqref{green_naghdi} in the sense of Definition \ref{consistence}.
\end{prop}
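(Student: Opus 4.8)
The plan is to verify the six equations of System \eqref{green_naghdi} one at a time, for an arbitrary smooth solution $\left(\zeta, \Umuszero, \bm{\omega}\right)$ of \eqref{castro_lannes_formulation}, after defining $\overline{\textbf{V}}$, $\textbf{V}^{\sharp}$, $E$ and $F$ through \eqref{V_average}, \eqref{u_diese}, \eqref{def_E} and \eqref{def_F}. Four of the equations are already at hand: the first holds exactly by Proposition \ref{mean_eq} (with $\gamma = 0$) combined with the first Castro-Lannes equation; the fourth is precisely Proposition \ref{udiese_eq}; and the fifth and sixth are exactly Propositions \ref{eq_E} and \ref{eq_F}, whose residuals are of the announced orders in the regime $\mathcal{A}_{\text{GN}}$ \eqref{GN_regime}. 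So the whole difficulty is concentrated in the two momentum equations (the second and the third), which I would treat following Part 5.4 in \cite{Castro_Lannes_shallow_water}, keeping track of the Coriolis-specific terms.

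For the second equation I would start from the first component of the second Castro-Lannes equation in \eqref{castro_lannes_formulation}, namely $\partial_{t} \us + \partial_{x}\zeta + \frac{\epsilon}{2}\partial_x\lver\Umuszero\rver^2 - \frac{\epsilon}{2\mu}\partial_x\!\left[(1+\epsilon^2\mu(\partial_x\zeta)^2)\underline{\text{w}}^2\right] + \epsilon\,(\underline{\bm{\omega}}\cdot\textbf{N}^{\mu,0})\,(\underline{\textbf{V}}^{\perp})_x + \frac{\epsilon}{\text{Ro}}(\underline{\textbf{V}}^{\perp})_x = 0$. The strategy is then: (a) insert the expansion of $\us$ from Proposition \ref{equation_u} and differentiate in time via Proposition \ref{equation_partial_t_u}, so that the correction $-\mu\frac{1}{3h}\partial_x(h^3\partial_x\overline{u})$ together with the quadratic self-advection builds the operator $\left(1+\mu\mathcal{T}\right)\!\left(\partial_t\overline{u}+\epsilon\overline{u}\partial_x\overline{u}\right)$ and the term $\epsilon\mu\mathcal{Q}(\overline{u})$; (b) replace the remaining $\partial_t\text{Q}_x$ using the evolution equation of Proposition \ref{eq_Qx_improve}, which supplies both the $E_{xx}$ contribution (through $\frac{1}{h}\partial_x\int(\uastsh)^2 = \frac{1}{h}\partial_x E_{xx}$) and the $\mathcal{C}_1$ term; (c) use Proposition \ref{equation_v} and the $\text{Q}_x$-equation to turn the Coriolis boundary term $\frac{\epsilon}{\text{Ro}}(\underline{\textbf{V}}^{\perp})_x = -\frac{\epsilon}{\text{Ro}}\underline{v}$ into $-\frac{\epsilon}{\text{Ro}}\overline{v}$ plus the $\frac{\epsilon}{\text{Ro}}\frac{\mu^{3/2}}{24h}\partial_x^2(h^3 v^{\sharp})$ correction; and (d) rewrite $\overline{T\uastsh}$ with Lemma \ref{udiese_T_eq}. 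The key bookkeeping fact, exactly as in the Boussinesq case (Proposition \ref{constit_boussi}), is that all the terms carrying $\text{Q}_x$ cancel between the $\us$-expansion and the $\text{Q}_x$-equation; what survives is precisely the second equation of \eqref{green_naghdi}, every remaining contribution being $\mathcal{O}(\mu^2)$ once one uses $\beta = \mathcal{O}(\mu)$ to absorb the topography remainders $\max(\mu^2,\beta\mu)R$ and $\max(\mu^{3/2},\beta\sqrt{\mu})R$.

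The third equation is handled in the same spirit but is lighter: starting from the transverse component of the Castro-Lannes momentum equation, whose Coriolis source is $\frac{\epsilon}{\text{Ro}}\underline{u}$, I would use the expansions of $v$ and $\underline{v}$ from Proposition \ref{equation_v}, eliminate $\partial_t\text{Q}_y$ with Proposition \ref{eq_Qy_improve}, and invoke the expansion of $u$ from Proposition \ref{equation_u}; the terms containing $\text{Q}_y$ cancel as before, leaving $\partial_t\overline{v}+\epsilon\overline{u}\partial_x\overline{v}+\frac{\epsilon}{\text{Ro}}\overline{u}+\epsilon\mu\partial_x E_{xy}+\epsilon\mu^{3/2}\mathcal{C}_2(v^{\sharp},\partial_x^2\overline{u})$ up to $\mathcal{O}(\mu^2)$.

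I expect the main obstacle to be steps (a)--(d) of the second equation: correctly matching the coefficients of $\partial_x E_{xx}$ and of the $v^{\sharp}$ Coriolis correction, and verifying the cancellation of the $\text{Q}_x$ terms, all while keeping careful track of the $h$-dependent weights and of the $\beta=\mathcal{O}(\mu)$ assumption that allows every topography-gradient remainder to be demoted to order $\mathcal{O}(\mu^2)$. Since all the constituent identities (Propositions \ref{equation_u}, \ref{equation_v}, \ref{equation_partial_t_u}, \ref{eq_Qx_improve}, \ref{eq_Qy_improve}, \ref{udiese_eq}, \ref{eq_E}, \ref{eq_F} and Lemma \ref{udiese_T_eq}) are already established, the argument reduces to a lengthy but routine algebraic verification, which is why the proof can be compressed to ``follow Part 5.4 in \cite{Castro_Lannes_shallow_water}'' once the Coriolis-specific terms are tracked.
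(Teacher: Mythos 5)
Your proposal follows essentially the same route as the paper: the first equation via Proposition \ref{mean_eq}, the fourth, fifth and sixth via Propositions \ref{udiese_eq}, \ref{eq_E} and \ref{eq_F}, and the two momentum equations by combining Propositions \ref{equation_u}, \ref{equation_v}, \ref{equation_partial_t_u}, \ref{eq_Qx_improve}, \ref{eq_Qy_improve}, Lemma \ref{udiese_T_eq} and Proposition \ref{udiese_eq}, with the same key observation that all the $\text{Q}_{x}$ and $\text{Q}_{y}$ terms cancel and that $\beta=\mathcal{O}(\mu)$ absorbs the topography remainders. The only minor imprecision is attributing the $\frac{\epsilon}{\text{Ro}}\frac{\mu^{3/2}}{24h}\partial_{x}^{2}(h^{3}v^{\sharp})$ correction to step (c); it in fact arises from the time derivative of the $\mu^{\frac{3}{2}}$ term in the expansion of $\us$ rewritten via Lemma \ref{udiese_T_eq} and then Proposition \ref{udiese_eq}, ingredients you do invoke elsewhere, so the argument is intact.
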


\begin{proof}
\noindent The proof is similar to the one in Proposition \ref{constit_boussi}. The first equation of the Green-Naghdi equations is always satisfied for a solution of the Castro-Lannes formulation by Proposition \ref{mean_eq}. For the second equation, we use Proposition \ref{equation_u}, Proposition \ref{eq_Qx_improve} together with Proposition \ref{equation_partial_t_u}, Lemma \ref{udiese_T_eq} and Proposition \ref{udiese_eq}. Notice the fact that all the terms with $\text{Q}_{x}$ disappear. The third equation follows from Proposition \ref{equation_v}, \ref{equation_u} and \ref{eq_Qy_improve} (all the terms with $\text{Q}_{y}$ also disappear). The last equations follows from Propositions \ref{udiese_eq}, \ref{eq_E} and \ref{eq_F}.
\end{proof}

\begin{remark}
\noindent Notice that even without a Coriolis forcing, we can not decrease the number of equations in the previous Green-Naghdi equations. However, if one also suppose that the vorticity is initially of the form $\left(0,  \omega_{y}, 0 \right)^{t}$, which corresponds to the propagation of 2D water waves, we can significantly simplify the Green-Naghdi equations (See Section 4 in \cite{Castro_Lannes_shallow_water} and \cite{lannes_marche}).
\end{remark}

\subsection{A simplified model in the case of a weak rotation and medium amplitude waves}

\noindent As noticed in \cite{Castro_Lannes_shallow_water}, if we assume that $\epsilon = \mathcal{O}(\sqrt{\mu})$ we can simplify the Green-Naghdi equations. This regime corresponds to medium amplitude waves (in the terminology of \cite{Lannes_ww}). We also assume that $\frac{\epsilon}{\text{Ro}} = \mathcal{O}(\sqrt{\mu})$. Then, we can simplify the Green-Naghdi system \eqref{green_naghdi} by dropping all the terms of $\mathcal{O}(\mu^{2})$ and we get

\begin{equation}
\left\{
\begin{aligned}
&\partial_{t} \zeta + \partial_{x} \left(h \overline{u} \right) = 0,\\
&( 1 +  \mu \mathcal{T} ) \left( \partial_{t} \overline{u}  + \epsilon \overline{u} \partial_{x} \overline{u} \right) +  \partial_{x} \zeta - \frac{\epsilon}{\text{Ro}} \overline{v} + \epsilon \mu \mathcal{Q}(\overline{u}) +  \epsilon \mu \partial_{x} E_{xx} = 0,\\
&\partial_{t} \overline{v} + \epsilon \overline{u} \partial_{x} \overline{v} + \frac{\epsilon}{\text{Ro}} \overline{u} + \epsilon \mu \partial_{x} E_{xy} = 0,\\
&\partial_{t} E  +  \epsilon \overline{u} \partial_{x} E + \epsilon \, l \! \left(E, \partial_{x} \overline{\textbf{V}} \right) + \frac{\epsilon}{\text{Ro}} E^{S}   =  0.
\end{aligned}
\right.
\end{equation}

\noindent Notice that in this regime, we catch effects of the vorticity on $\overline{\textbf{V}}$ thanks to the second order tensor  $E$. Without vorticity, this regime is particularly interesting since it is related to the Camassa-Holm equation and the Degasperis-Procesi equation (see for instance \cite{constantin_lannes}). It could be interesting to understand how we can adapt these two scalar equations in presence of a Coriolis forcing.

\section*{Acknowledgments}

\noindent The author has been partially funded by the ANR project Dyficolti ANR-13-BS01-0003.

\footnotesize
\bibliographystyle{plain}
\bibliography{biblio}
\normalsize

\end{document}